\documentclass[11pt,a4paper]{amsart}
\usepackage[backend=biber,
style=alphabetic,
url=true,
nohashothers=false,
maxnames=50,
maxalphanames=5]{biblatex}
\usepackage[utf8]{inputenc}
\usepackage[T1]{fontenc}
\usepackage{lmodern}
\usepackage{multirow}
\usepackage[section]{placeins}

\usepackage{amsmath}
\usepackage{amsfonts}
\usepackage{amssymb}
\usepackage{amsthm}
\usepackage{mathrsfs}
\usepackage{footmisc}
\usepackage{empheq}
\usepackage{stmaryrd}

\usepackage{makecell}
\usepackage{MnSymbol}
\usepackage{float}
\restylefloat{table}
\usepackage[usenames, dvipsnames]{color}
\usepackage{tikz}
\usepackage{tikz-cd}
\usepackage{caption,subcaption}
\usepackage{wrapfig}
\usepackage{float} 
\usepackage{multicol}
\usepackage{empheq}
\usepackage{array}
\usepackage[thinlines]{easytable}
\usepackage{pgfplots}
\usepackage{enumitem}
\usepackage{calc}
\pgfplotsset{compat=1.13} 

\usepackage{todonotes}

\newlength{\mylongest}
\SetLabelAlign{CenterWithParen}{\makebox[\mylongest]{(#1)}}
\usepackage{colortbl}

\SetLabelAlign{CenterWithParen}{\makebox[2mm]{(#1)}}

\newcommand\numberthis{\addtocounter{equation}{1}\tag{\theequation}}

\calclayout

\definecolor{liens}{rgb}{1,0,0}
\usepackage[colorlinks=true, linkcolor=blue,
hyperfootnotes=blue,citecolor=blue,urlcolor=blue]{hyperref}

\usepackage{thmtools}

\theoremstyle{plain}
\newtheorem{theorem}{Theorem}[section]
\newtheorem*{theorem*}{Theorem}
\newtheorem{lemma}[theorem]{Lemma}
\newtheorem{corollary}[theorem]{Corollary}
\newtheorem{proposition}[theorem]{Proposition}

\renewcommand{\thmcontinues}[1]{continued}

\theoremstyle{definition}
\newtheorem{definition}[theorem]{Definition}
\newtheorem{defprop}[theorem]{Definition/Proposition}

\newtheorem{example}[theorem]{Example}

\theoremstyle{remark}
\newtheorem{remark}[theorem]{Remark} 

\numberwithin{equation}{section}
\numberwithin{figure}{section}
\numberwithin{table}{section}
\theoremstyle{theorem}

\def\C{\mathbb{C}}

\def\id{\text{id}}

\def\eqdef{\overset{\mathrm{def}}{=}}

\def\st{\, : \,}          
\def\id{\mathrm{id}}

\def\Z{\mathbb{Z}}
\def\C{\mathbb{C}}
\def\R{\mathbb{R}}
\def\Q{\mathbb{Q}}
\def\N{{\mathbb N}}
\def\K{{\mathbb K}}

\def\P1{\mathbb{P}^{1}}

\def\beq{\begin{equation}}
  \def\eeq{\end{equation}}
\def\Et{E_t}
\def\Etproj{\overline{E_t}}

\def\P2{\mathbb{P}^{2}}

\def\K{\mathbb{K}}

\def\ord{\mbox{ord }}

\def\P1{\mathbb{P}^{1}}

\def\calC{{\mathcal{C}}}
\def\calS{\mathcal{S}}

\def\calA{{\mathcal{A}}}
\def\calB{{\mathcal{B}}}

\def\ord{{\rm ord}}

\def\cL{\mathcal{L}}

\def\P{\mathbb{P}}

\def\Ftld{\widetilde{F}}
\def\Gtld{\widetilde{G}}
\def\ftld{\widetilde{f}}
\def\gtld{\widetilde{g}}

\def\gamtld{\widetilde{\gamma}}
\DeclareFontFamily{U}{mathx}{}
\DeclareFontShape{U}{mathx}{m}{n}{<-> mathx10}{}
\DeclareSymbolFont{mathx}{U}{mathx}{m}{n}
\DeclareMathAccent{\widehat}{0}{mathx}{"70}
\DeclareMathAccent{\widecheck}{0}{mathx}{"71}

\def\llpar{(\!(}
\def\rrpar{)\!)}

\def\Fcap{\widecheck{F}}
\def\Gcap{\widecheck{G}}

\def\Fpar{\mathbb{F}}

\newcommand\xqed[1]{%
  \leavevmode\unskip\penalty9999 \hbox{}\nobreak\hfill
  \quad\hbox{#1}}
\newcommand\exqed{\xqed{\tiny $\blacksquare$}}

\title[Walks in the quadrant with interacting boundaries: genus zero case]{Walks in the quadrant with \\ interacting boundaries: genus zero case}
\author{Pierre Bonnet}

\begin{document}
\maketitle

\begin{abstract}
  The study of \emph{lattice walks} restricted to the first quadrant
  has shed a lot of interest in the past twenty years. In particular, there
  has been an important effort to classify models of \emph{weighted
    walks} with small steps with respect to the algebraic-differential nature of their
  generating function.  The techniques that were developed in the
  course of this work are now applied to different extensions of those
  walks. One of these extensions, called \emph{walks with interacting
    boundaries}, consists in accounting for the number of contacts of the walk with
  the axes, with motivation coming from statistical physics.  These
  contacts are encoded as two additional parameters for the generating
  function, the \emph{Boltzmann weights}.

  For one notable family of models, called \emph{genus zero models},
  we establish in this paper the complete classification of their
  generating function, for all real values of the parameters. We do this
  by adapting to this more general case a method due to
  Dreyfus, Hardouin, Roques and Singer, used
  in the former classification, and which consists
  in studying the rational solutions to a \emph{$q$-difference equation}.
  In almost all cases, we show that the generating function is
  hypertranscendental, regardless of the values of the weights. In the
  remaining cases, we prove that specific algebraic relations between
  the Boltzmann weights make the generating function $\N$-algebraic or
  $\N$-rational, contrasting with the interaction-less case.
\end{abstract}

\section*{Introduction}

Lattice walks restricted to a cone are ubiquitous.  Aside from their
own interest (they naturally model random processes), these objects
are general enough to represent many classes of discrete objects,
including trees, permutations, planar maps, queuing processes\ldots\,

\begin{figure}[h]
  \centering
  \includegraphics{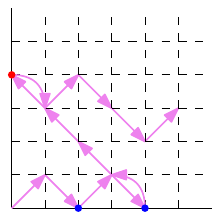}
  \captionsetup{labelformat=empty}
  \caption{A walk in the quadrant}
\end{figure}

One important type of lattice walks are the two dimensional walks
restricted to the first quadrant. In this setting, a \emph{set of steps}
$\calS$ is set, and the goal is to enumerate the walks that use the steps
in $\calS$, and whose coordinates $(i,j)$ are nonnegative at all
times. The combination of two constraints ($i\ge 0$ and
$j\ge 0$) make the nature of the problem extremely dependent on the
set of steps $\calS$, and highly nontrivial.

As a support for this
claim, we mention the remarkable work of the classification
of two dimensional quadrant walks with small steps (i.e.
with $\calS \subset \{-1,0,1\}^2$), whose goal was to
determine the algebraic-differential nature of the generating
function $Q(x,y,t)$
of such quadrant walks, counted with respect to ending coordinates and length. This systematic exploration
showed a wide range of different behaviours depending on the set
of steps. This achievement involved many results from
enumerative combinatorics~\cite{BMM,BMGessel}, probability theory~\cite{DenisovWachtel,FIM}, computer algebra~\cite{BBMR16}
and differential Galois theory~\cite{DHRS,DreyfusHardouinRoquesSingerGenuszero2}, whose collective contributions
gave rise to many effective tools to study
\emph{catalytic variables} equations. A reference
sketching the combination of these different results
to get the classification can be found for instance
in~\cite{dreyfusEnumerationWeightedQuadrant2024}.
The systematic study of quadrant walks continues to this day with
other types of walks, for instance walks with arbitrarily large steps
\cite{bostan2018counting,BHorb}, three-quadrant walks
\cite{bousquet-melouEnumerationThreequadrantWalks2023}, or the focus
of the current paper, \emph{walks with interacting boundaries}.

This extension, first introduced in \cite{tabbaraExactSolutionTwo2014},
consists in accounting for the number of contacts
of the walk with the axes $i=0$ and $j=0$ (the \emph{interaction}).
The study of such problems leads to the study of the
\emph{phase transitions} of the
model~\cite{rensburgStatisticalMechanicsInteracting2015}. More precisely,
one parametrizes the tendency of the model to stick to the boundary
$i=0$ (resp. $j=0$) via the \emph{Boltzmann weight} $b \in \R^+$ (resp. $a \in \R^+$).
The qualitative behaviour of the system changes depending on the Boltzmann
weights, thus defining its phases. Counting the walks relative to the
interaction statistics is done through functional equations that generalize
those that appeared earlier, when ignoring this statistics.
As a result, some of the techniques developed for counting walks
in the quadrant may be applied to this setting
(see~\cite{dreyfus2019differential}).
For small steps models, they
allowed authors to classify the generating
function of some models for some Boltzmann
weights~\cite{beaton_quarter-plane_2021,beaton_exact_2019}, or
even to solve them to get the full phase transition diagram
such as in~\cite{tabbaraExactSolutionTwo2014}.

\subsection*{Contribution}
In this paper, we focus on the so-called \emph{weighted models of genus zero},
which correspond to the five sets of steps below.
\begin{figure*}[h!]
  \centering
  \includegraphics[width=.7\textwidth]{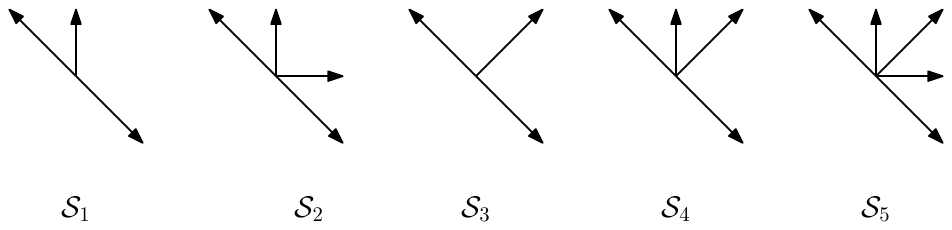}
\caption*{The five considered sets of steps}
\end{figure*}
For these sets of steps, we give the full classification of the
algebraic-differential nature of the generating function $Q(x,y)$ of weighted walks
with interacting boundaries with respect to the
variables $x$ and $y$. Recall that for every
weighting $(d_{v})_{v\in\calS}$ attached to the steps
and Boltzmann weights $a$ and $b$, the generating
function $Q(x,y)$ is defined as
\[
  Q(x,y) = \sum_{\mbox{$w$ walk}} \mbox{weight}(w) x^i y^j t^n,
\]
where for $w$ a walk, $(i,j)$ are its ending coordinates, $n$ is its length,
and $\mbox{weight}(w)$ is a monomial
depending on the weights $(d_v)_{v \in \calS}$, $a$ and $b$ (defined
in Section~\ref{sec:weighted-walks}).

To do this, we adapt the strategy of~\cite{DreyfusHardouinRoquesSingerGenuszero}
that was used to classify the generating function of weighted walks
based upon on the sets of steps of Figure~\ref{fig:2Dwalk}.
This amounts to study the rational solutions of
some functional equation (a  \emph{$q$-difference equation}),
whose coefficients depend on the parameters $a$, $b$ and $(d_v)_{v \in \calS}$.
This $q$-difference equation for the case of walks with
interacting boundaries will be obtained
in Section~\ref{sec:qdiff-equ}.

In Section~\ref{sec:class-strat} we exploit the symmetries of the
$q$-difference equation to reduce the classification of its solutions to
the study of two decoupling problems.
We develop in Section~\ref{sect:decoupl_eq}
criteria based on pole propagation
to test the existence of rational solutions to
decoupling equations of a special form.
These criteria reduce the classification of $Q(x,y)$
to relations between the weights,
and they provide a concise way to explore
the space of parameters, to either prove the nonexistence of such
decouplings or to find solutions. These methods call
for eventual generalizations (Section~\ref{sec:other-q-difference}).

Using this technique, we ultimately obtain the form of the following theorem:
\begin{theorem*}[Theorem~\ref{thm:thm_clas}, Section~\ref{sec:full-classification}]
  For any weighted genus~$0$ model, the generating function $Q(x,y)$ of
  weighted walks in the quadrant with interacting boundaries has the following
  nature in the variables $x$ and $y$:
  \begin{enumerate}
  \item For the sets of steps $\calS_1$ or $\calS_2$ and Boltzmann
    weights satisfying $a+b=ab$, the generating function $Q(x,y)$ is
    \textbf{rational} with specializations $Q(x,0)$ and $Q(0,y)$
    respectively equal to
    \begin{align*} Q(x,0) &=\frac{1}{1 - x \displaystyle \frac{a
                            d_{1,0} t + ab d_{1,-1} d_{0,1} t^2}{1 - ab d_{1,-1} d_{-1,1} t^2}}, &
                                                                                                   Q(0,y) &= \frac{1}{1 - y \displaystyle \frac{b d_{0,1} t + ab d_{-1,1}
                                                                                                            d_{1,0} t^2}{1 - ab d_{1,-1} d_{-1,1} t^2}}.
    \end{align*}
  \item For the set of steps $\calS_3$ and Boltzmann weights $a=b=2$,
    the generating function $Q(x,y)$ is \textbf{algebraic} of degree at
    most $4$, with specializations $Q(x,0)$ and $Q(0,y)$ respectively equal
    to
    \begin{align*} Q(x,0) &= \frac{1}{\sqrt{1 - \displaystyle x^2
                            \frac{4 d_{1,1} d_{1,-1} t^2}{1-4 d_{1,-1} d_{-1,1} t^2}}}, & Q(0,y)
      &= \frac{1}{\sqrt{1 - \displaystyle y^2 \frac{4 d_{1,1} d_{-1,1}
        t^2}{1-4 d_{1,-1} d_{-1,1} t^2}}}.
    \end{align*}
  \item In every other case, the series $Q(x,y)$ is \textbf{non
      $\boldsymbol{x}$-D-algebraic nor $\boldsymbol{y}$-D-algebraic} (meaning $Q(x,y)$
    satisfies no polynomial differential equation in $x$ nor in $y$ for any
    choice of $x$, $y$, $t$ and weighting $(d_v)_{v \in \calS}$, $a$ and $b$).
  \end{enumerate}
\end{theorem*}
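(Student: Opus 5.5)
Following the contribution paragraph, the plan is to adapt the approach of Dreyfus--Hardouin--Roques--Singer for genus zero models. \textbf{Step 1 (functional equation and uniformization).} Write the kernel equation for walks with interacting boundaries. With $K(x,y)$ the kernel, it has the form
\[
K(x,y)Q(x,y) = c + A(x,y)\,Q(x,0) + B(x,y)\,Q(0,y) + C\,Q(0,0),
\]
where $A$, $B$ and $C$ are rational functions in $x,y,a,b$ and the weights $d_v$. For the five genus zero step sets, the kernel curve $\{K=0\}$ is rational. It has a parametrization $s\mapsto (x(s),y(s))$ such that the group of the walk acts as $s\mapsto qs$ for some $q$ that is not a root of unity (for generic $t$). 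Evaluating the equation on the curve and eliminating $Q(0,y)$ with the involutions gives a first order inhomogeneous $q$-difference equation $F(qs)=\alpha(s)F(s)+\beta(s)$ for $F(s)=Q(x(s),0)$, with rational coefficients in $s$. Unlike the interaction-less case, $\alpha\neq 1$ in general, because the Boltzmann weights enter through the coefficients of $Q(x,0)$ and $Q(0,y)$.

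\textbf{Step 2 (Galois criterion).} Invoke the difference Galois theory results (Hardouin--Singer). If $F$ is $x$-D-algebraic, then either $\alpha$ is a $q$-multiplicative coboundary up to a constant, or the telescoping/decoupling problem $\beta/\alpha$-twisted $=g(qs)-g(s)$ has a rational solution $g$, after the usual normalization. Using the symmetries of the equation (Section~\ref{sec:class-strat}), this reduces to two decoupling problems. We also need the transfer between $x$ and $y$, which holds because $Q(0,y)$ is rationally tied to $Q(x,0)$ on the curve, and from the specializations to $Q(x,y)$ itself.

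\textbf{Step 3 (pole propagation).} Apply the criteria of Section~\ref{sect:decoupl_eq}. The poles of a rational solution must form $q$-orbit chains that start and end at poles of the coefficients. Compare the orbits of the zeros and poles of $\alpha$ and $\beta$ in terms of $a$, $b$ and $d_v$. Then either we get a contradiction, giving case (3), or we get forced algebraic relations among the weights. The expected outcome is the relation $a+b=ab$ for $\mathcal S_1,\mathcal S_2$ and $a=b=2$ for $\mathcal S_3$.

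\textbf{Step 4 (positive cases).} In the exceptional cases, verify the claimed formulas directly. Substitute the candidate $Q(x,0)$ and $Q(0,y)$ into the kernel equation and check that $Q(x,y)$ obtained by division by $K$ is a power series; this works by showing that the numerator vanishes on both kernel roots. This yields rationality, respectively algebraicity of degree at most $4$ from the square roots.

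The main obstacle is Step 3. It requires an exhaustive case analysis over five step sets and all real parameters, including degenerate coincidences where a pole orbit of $\alpha$ meets one of $\beta$, for instance through $q$-power relations involving $a$ and $b$. These must be ruled out one by one using the pole-propagation criteria.
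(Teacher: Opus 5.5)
\textbf{The gap is in Step~3.} You say pole propagation yields either a contradiction or the algebraic relations. It does not, and the idea that closes the gap is missing.

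The automorphism $\sigma(s)=qs$ fixes $0$ and $\infty$, and these are exactly the poles of $\gamtld_1$ and $\gamtld_2$. So orbit-chain arguments say nothing about poles there. In the best case (all entries of $M_1$, or of $M_2$, in $\Z^-\cup\{\bot\}$), pole propagation only shows that the $\iota_1$-even part $\ftld$ (resp.\ the $\iota_2$-even part $\gtld$) of a rational $\Ftld$ has its poles in $\{0,\infty\}$. That is no contradiction: the $q$-difference equations do not characterize $\Ftld$.

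What you need is to feed back the combinatorial germ $\Ftld=x(s)Q(x(s),0)$ near $s=0$. An $\iota_1$-invariant rational function with poles in $\{0,\infty\}$ is a polynomial in $1/x(s)$ (Lemma~\ref{lem:lift_laurent}). If moreover the homogeneous decoupling equation has only the zero solution, then $\Ftld=\ftld$, hence $xQ(x,0)\in\C[1/x]$. This contradicts $xQ(x,0)=x+O(x^2)$ (Lemma~\ref{lem:hypertrans}).

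This also shows why the ``either/or'' of your Step~2 is not the reduction that works. Ishizaki's theorem gives D-algebraic $\Rightarrow$ rational directly. The even and odd parts of $\Ftld,\Gtld$ then satisfy the inhomogeneous and the homogeneous decoupling equations \emph{simultaneously}, and the argument needs both facts.

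Pole propagation is also genuinely inconclusive in several parameter regions:
\begin{itemize}
\item For several supports with $(A,B)\in\{0,\tfrac12\}^2$ (which includes $a=b=1$), the homogeneous relation has signed solutions of the wrong parity. Then Corollary~\ref{cor:strange_poles} does not apply, and genuine solutions are excluded only via Lemma~\ref{lem:wrong_signs_hom}.
\item For $\calS_1$ with $B=\tfrac12\neq A$, neither $M_1$ nor $M_2$ has all entries in $\Z^-\cup\{\bot\}$ (e.g.\ $\delta(\iota_2P_4,P_4)=1$), so $\gtld$ may have poles at $P_4$ and $\iota_2P_4$. This is not an algebraic case. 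It is killed only by the separate argument of Proposition~\ref{prop:clas_b12}: pole order at most one, multiplication by the function $u\gamtld_2$ of Lemma~\ref{lem:sup1_div} (divisor $P_4+\iota_2P_4-0-\infty$), ending with the absurd identity $Q(x,0)=1$.
\item ``Comparing orbits'' means deciding whether $\sigma^nP=P'$ for some $n\in\Z$, uniformly in the parameters. You give no method for this. The paper makes it a finite check by tracking bivaluations in Puiseux series in $t$ (Lemmas~\ref{thm:track_val} and~\ref{lem:dyn_sys}).
\end{itemize}

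\textbf{On Step~4.} Guess-and-verify is a legitimate alternative to the paper's derivation, which uses that $\Gtld/h_2$ is $q$-periodic and meromorphic on $\C$, hence constant. Two cautions apply. First, $\omega=0$ in both algebraic cases, so the equation is linear homogeneous over $\C[[t]]$ and uniqueness requires the normalization $Q(0,0)=1$. Second, divisibility by $K$ in $\C[x,y][[t]]$ needs a genuine kernel-method argument, since your candidates for $Q(x,0)$ and $Q(0,y)$ are not polynomials.
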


In~\cite{DreyfusHardouinRoquesSingerGenuszero}
where the Boltzmann weights $a$ and $b$ are both equal to one,
the generating functions of the models were found to
be all non $x$-D-algebraic nor $y$-D-algebraic. The addition
of the Boltzmann weights $a$ and $b$ allows us to find algebraic models.

\subsection*{Organization of the paper}

In Section~\ref{sec:study-walks-thro}, we recall standard definitions and
facts in the study of quadrant walks, mainly their statistics, the
weighting associated to a walk given a weighting $((d_v)_{v\in\calS},a,b)$, the generating function $Q(x,y)$ of
such walks, and the algebraic-differential classification of bivariate
power series. We then focus on the five sets of steps of
Figure~\ref{fig:2Dwalk}, for which the kernel curve
has genus~$0$.
As a result, the kernel curve admits a rational parametrization $(x(s),y(s))$,
for which we will recall basic facts, among which
the existence of an automorphism $\sigma(s) \eqdef  q s$ for some
real number $q$ which is not a root of unit.
We then proceed to evaluate the functional equation
for $Q(x,y)$ on this curve, this way obtaining two independent functional
equations ($q$-difference equations) on the functions
$\Ftld(s)=Q(x(s),0)$ and $\Gtld(s)=Q(0,y(s))$.
We then compare the algebraic-differential properties of these two functions
with those of $Q(x,y)$, thus reducing to the study of $\Ftld(s)$ and $\Gtld(s)$
through these $q$-difference equations.

In Section~\ref{sec:class-strat}, we thus devise the strategy for determining
the algebraic-differential nature of $\Ftld(s)$ and $\Gtld(s)$. The analytic
properties of $q$-difference equations being rigid enough, a theorem due to
Ishizaki allows us to reduce the classification to two \emph{decoupling problems},
introduced in Lemma~\ref{lem:rat_sol_dcpl}, one said homogeneous, the other
one inhomogeneous. The classification will then go as follows.
For most set of steps and weightings (see
Section~\ref{sec:one-particular-case} for the one exception), and depending
on the existence of solutions to these decoupling equations, either
we will be in the case of Lemma~\ref{lem:hypertrans},
and then the generating function will not be D-algebraic in $x$ and $y$,
either we will be in the case of Lemma~\ref{lem:lift_sol},
and then we will be able to give explicit algebraic solutions for
$Q(x,y)$.

Section~\ref{sect:decoupl_eq} is thus devoted to the study of the
rational solutions to decoupling equations of the form
$\gamma_1(x(s),y(s)) f(x(s)) + \gamma_2(x(s),y(s)) g(y(s)) + c = 0$
for all $s \in \P^1$,
with fractions $\gamma_1$, $\gamma_2$ and $c$ depending
on the weights $(d_v)_{v \in \calS}$, $a$, $b$ and $t$.
Through a process called \emph{pole propagation}, we will see
that the existence of rational solutions is conditioned to the relative
positions of some particular points of $\P^1$ with respect to the action of
$\sigma$. More explicitly, the relative position between
two points is defined as
the unique integer $n=\delta(P,Q)$ such that
$\sigma^n P = Q$, which we call the \emph{$\sigma$-distance}.
In the end, we extract necessary conditions for
the existence of rational solutions to the decoupling problem, based on the
values $\delta(P,Q)$ for $(P,Q) \in \cL^-\times \cL^+$,
for some finite sets $\cL^-$ and $\cL^+$.

Section~\ref{sect:decide_diff} gives a way to compute
this $\sigma$-distance, based on the fact that we can define valuations
on the coordinates of the points that are considered in this paper (they are the orbits
of points in $\cL^-$ and $\cL^+$). These
valuations evolve with the action of $\sigma$ in a deterministic way,
allowing to effectively compute the $\sigma$-distance between two
points given a fixed weighting. We extend this algorithm to find
algebraic relations between the weights that guarantee a certain $\sigma$-distance.
In the end, we compile the computations for our case
in Appendix~\ref{sect:mat_results}.

In Section~\ref{sec:classification}, we finally exploit the
$\sigma$-distance computation of Section~\ref{sect:decide_diff} along
with the criteria determined in Section~\ref{sect:decoupl_eq} to treat
all the cases. In the end, we obtain the classification
in the form of Theorem~\ref{thm:thm_clas}.

The very last Section~\ref{sec:conclusion-comments} discusses
various questions left at the end of the present paper, related
to alternative proofs for the algebraic cases, the phase transitions
of the models, and the general study of these inhomogeneous decoupling equations.

\section{Quadrant walks and $q$-difference equations}
\label{sec:study-walks-thro}

\subsection{Quadrant walks with interacting boundaries}
\label{sec:quadrant-walks-with}
\subsubsection*{Quadrant walks}

\begin{figure}[h]
  \centering
  \includegraphics{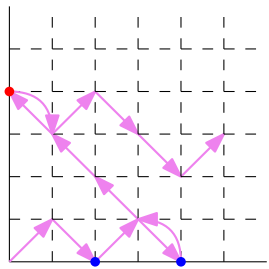}
  \caption{A walk in the quadrant with set of steps $\calS = \{(-1,1), (1,-1), (1,1)\}$,
    using 13 steps, 2 contacts with the $x$-axis
    and 1 contact with the $y$-axis.}\label{fig:2Dwalk}
\end{figure}

We recall here some basic definitions relative to the enumeration of walks
in the quadrant.
Consider a finite subset~$\calS$ of vectors in $\Z^2 \setminus \{(0,0)\}$.
A walk of $n$ steps modeled on $\calS$ (starting at $(0,0)$)
is a sequence $w = v_1, v_2, \dots, v_n$ of steps $v_i \in \calS$.
We have the additional condition that
the walk is a \emph{quadrant walk}, that is, at every index $i \le n$, we
require that both coordinates of $\sum_{j \le i} v_i$
are nonnegative. Figure~\ref{fig:2Dwalk} represents
an example of a quadrant walk.

When counting these walks, several statistics are included.
We list below the statistics of a walk $w$
of $n$ steps that we consider in this paper:
\begin{itemize}
\item the coordinates $(i,j)$ of the last point
  of $\N^2$ they visit.
\item for each step $v$ in $\calS$ the number $n_v$ of occurrences
  of this particular step in the walk $w$.
\item the number of contacts (also called interactions) of $w$ with the axes.
  Recall that a contact with the $x$-axis occurs at each $i \ge 1$ such that
$\sum_{j \le i} v_j$ is zero. Thus, performing twice the step $(1,0)$
starting from $(0,0)$ accounts for two contacts with the $x$-axis,
despite the walk remaining on the $x$-axis.  The number of contacts
with the $x$-axis (resp. $y$-axis) is denoted by $n_x$ (resp. $n_y$).
The \emph{interacting boundaries} qualification refers to this
statistics.
\end{itemize}

\subsubsection*{Weighted walks} \label{sec:weighted-walks}

In combinatorics, it is quite common to associate weights to
objects, depending on their statistics, often for probabilistic
purposes. Below, we define the weight of a quadrant walk
with interacting boundaries.

For each $v \in \calS$ is given $d_v > 0$ the real positive
weight associated with the step $v$.
Moreover, we are also given positive real numbers $a>0$ (resp. $b>0$)
the \emph{Boltzmann weight} associated with the $x$-axis (resp. $y$-axis).
The weight of the walk $w$ is then defined as the monomial \[
  \mbox{weight}(w) := \left( \prod_{v \in \calS} d_v^{n_v} \right) a^{n_x} b^{n_y}.
\]
This definition of weight is correlated with an associated probability distribution
on walks of length $n$, defined so that the probability of a walk $w$
is proportional to its weight, i.e.
\[
  \P_n(w) := \frac{\mbox{weight}(w)}{\displaystyle \sum_{\mbox{$w'$ walk of length $n$}} \mbox{weight}(w')}.
\]
The weighting induces a bias on the distribution of quadrant walks,
for instance a bigger value for $d_{v}$ increases the probability
of performing the step $v$, or a small value of $b$ favors walks
that have fewer contacts with the $y$-axis.
The above heuristics can be made precised through limit properties
of the probability $\P_n$, that define \emph{phases}
(see Section~\ref{sec:phase-transitions}).

We call a \emph{weighted model} of quadrant walks with interacting boundaries
a set of steps $\calS$ together with a \emph{weighting},
i.e. positive real weights $(d_v)_{v \in \calS}$,
and Boltzmann weights $a>0$ and $b>0$. Given a weighted model, we write
\[\Fpar \eqdef \Q( (d_v)_{v\in\calS},a,b)\] for the subfield of~$\C$ generated by the weights.

\subsubsection*{Generating function and functional equation}

Given a weighted model $(\calS,(d_v)_{v\in\calS},a,b)$,
the generating function of quadrant walks on this weighted model is defined as
\[
  Q(x,y) \eqdef \sum_{\mbox{$w$ walk}} \mbox{weight}(w) x^i y^j t^n = \sum_{w \text{ walk}} \left(\prod_{v \in \calS} d_v^{n_v}\right) a^{n_x} b^{n_y} x^i y^j t^n.
\]
Since there is a finite number of quadrant walks of length $n$
and that the walks always terminate in the first quadrant,
the generating function $Q(x,y)$ belongs to $\Fpar[x,y] \llbracket t \rrbracket$
(recall that $R \llbracket t \rrbracket$ denotes the ring of formal power series
in the variable $t$ with coefficients in the ring $R$).
Note that it is harmless to have the $d_{v}$, $a$ and $b$ as real numbers
for the exact counting of walks with regards to the
statistics $n_v$, $n_x$ and $n_y$.
As the transcendence degree of $\R$ over $\Q$ is infinite, one may choose
algebraically independent weights $d_v$, $a$ and $b$ over $\Q$, and still perform
coefficient extraction to get these statistics since the coefficient $[t^n] Q(x,y)$
belongs to $\Q[(d_v)_{v\in\calS},a,b,x,y]$. This is why we directly consider the generating
function of weighted walks, only having $x$, $y$ and $t$ as variables.


The generating function is characterized through a functional equation.
In Theorem~6 of~\cite{beaton_exact_2019}
the authors derive the following explicit
functional equation for the series~$Q(x,y)$ when the steps of the model~$\calS$ are
small (that is $\calS \subseteq \{-1,0,1\}^2$):
\begin{align*} \label{eq:eq_func_Qxy_general}
  K(x,y) Q(x,y) &= \frac{xy}{ab} + x\left(y - \frac{y}{a} - t A_{-1}(x)\right) Q(x,0) \\
  &+ y \left(x - \frac{x}{b} - t B_{-1}(y)\right) Q(0,y)
   - \left(\frac{xy}{ab}(1-a)(1-b) - t \varepsilon \right) Q(0,0). \numberthis
\end{align*}
This functional equation generalizes those found in the quadrant
walks literature for the study of weighted models
independently of the interaction statistics.
The polynomial $K(x,y) \eqdef xy (1 - t S(x,y))$ is commonly called
the \emph{kernel}, where $S(x,y) \eqdef
\sum_{(i,j) \in \calS} d_{i,j} x^i y^j$ encodes
the set of steps as a Laurent polynomial.
The fractions $A_i(x)$ and $B_i(y)$ are then defined as
$A_i(x) \eqdef [y^i] S(x,y)$ and $B_j(y) \eqdef [x^j] S(x,y)$.
Finally, according to the notation of \cite{beaton_exact_2019},
the variable~$\varepsilon$ is set to $1$ if the step $(-1,-1)$ is an element
of $\calS$, and $\varepsilon=0$ otherwise.

This functional equation is part of the class of polynomial equations
in two \emph{catalytic variables}, $x$ and $y$, as the equation
relates $Q(x,y)$ with its specializations $Q(x,0)$,
$Q(0,y)$ and $Q(0,0)$. While the theory behind polynomial equations
involving only one catalytic variable is well known (the solutions are
always algebraic, see~\cite{BMJ}), this is not at all the case for equations
of two catalytic variables. The systematic study of walks in the quadrant revolves around
such equations, which is one of the reasons why this topic is challenging.

\subsubsection*{The differential classification}
In general, we do not expect to find a closed form for~$Q(x,y)$
from this equation.
Thus, a more reasonable question is the \emph{classification}
of the generating function, that is knowing where the function~$Q(x,y)$
fits in the following \emph{algebraic-differential hierarchy}:
\[
  \mbox{rational} \subset \mbox{algebraic} \subset \mbox{D-finite} \subset \mbox{D-algebraic}.
\]
We recall that the power series $Q(x,y)$ is called \emph{rational} if it
belongs to $\K(x,y,t)$;
\emph{algebraic} if it is a solution to a polynomial equation with coefficients
in $\K(x,y,t)$;
\emph{$x$-D-finite} (resp. $y$-D-finite, $t$-D-finite) if it is a solution to
a linear differential equation with respect to the variable $x$ (resp. $y$, $t$)
with coefficients in $\K(x,y,t)$, and it is \emph{D-finite} if
it is all at once $x$, $y$ and $t$-D-finite;
\emph{$x$-D-algebraic} (resp. $y$-D-algebraic, $t$-D-algebraic)
if it is a solution to a polynomial differential equation with respect
to the variable $x$ (resp. $y$, $t$)
with coefficients in $\K(x,y,t)$,
and it is \emph{D-algebraic} if it is all at once
$x$, $y$ and $t$-D-algebraic.

The problem is to classify the models of quadrant walks according to where
their generating function $Q(x,y)$ occurs in the hierarchy, and we say that a
weighted model is of class $X$ if its generating function is of class $X$.
Such a classification gives qualitative information
on the complexity of the walk. If it is low (at most D-finite), one
can expect a nicer combinatorial interpretation, maybe a closed
form, and fast algorithms to compute the coefficients. Otherwise,
their study is more complicated, and requires specific
techniques for each equation.

When setting the weights $d_v$, $a$ and $b$ to $1$
(which amounts to ignoring the interaction and
weights statistics, and is the original
setting of the systematic classification), the classification of small
steps models was completed in 2018.  The methods of this first
classification extend when considering arbitrary positive real weights
$d_v$, and it is now complete as well (see~\cite{dreyfus2019differential}).

Some of these techniques may in turn be adapted to the study of walks with the
interacting boundaries statistics, where we allow
other values of $a$ and $b$, and the weighted models of walks with interacting
boundaries that have been studied up to now rely on the finiteness of
the \emph{group of the walk}, which is a group of birational
transformations attached to each weighted model.  This is the case
in~\cite{tabbaraExactSolutionTwo2014}, where the walks with
interacting boundaries are completely solved for one specific model
(the reversed Gessel model, also called Gouyou-Beauchamps) for all weights $a, b > 0$, establishing the full
phase diagram.  This is also the case in
\cite{beaton_quarter-plane_2021}, where the authors fully solve the
Kreweras and reverse Kreweras walks with interaction for any value of
the Boltzmann weights.  Finally, in~\cite{beaton_exact_2019}, the
authors systematically investigate the models having a finite group,
for some Boltzmann weights, mainly $(a,a)$, $(1,b)$, $(a,1)$ and
$(a,b)$ for $a$ and $b$ algebraically independent over $\Q$, giving
upper bounds on the complexity of the generating function $Q(1,1)$.
We propose to treat a case with an infinite group, and for nongeneric
weights $d_{i,j}$, $a$ and $b$.

\subsection{Genus zero models}\label{sec:restr_models}

The authors of~\cite{DreyfusHardouinRoquesSingerGenuszero2}
define five models called the \emph{genus zero models} (the terminology
is explained in the next paragraph).
They are listed in Figure~\ref{fig:gen0_supports} below,
and they will be referenced in this paper as $\calS_1$, $\calS_2$, etc.
\begin{figure}[h!]
  \centering
  \includegraphics[width=.7\textwidth]{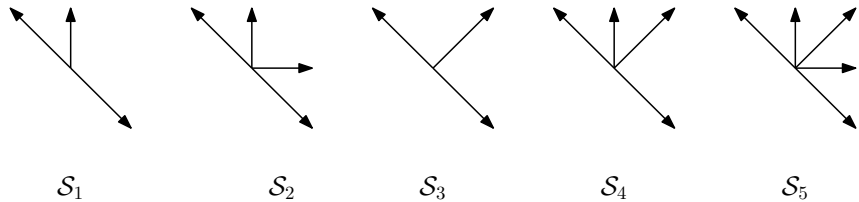}
\caption{The five models of genus~$0$} \label{fig:gen0_supports}
\end{figure}
The goal of the current paper is to establish the full classification
of walks with interacting boundaries based on these sets of steps.

For these sets of steps, we may perform simplifications on the general functional
equation~(\ref{eq:eq_func_Qxy_general}).
First, the Laurent polynomial is of the form
\[
  S(x,y) = d_{1,-1} \tfrac{x}{y} + d_{-1,1} \tfrac{y}{x} + d_{1,0} x + d_{0,1} y + d_{1,1} x y,
\]
with $d_{1,-1}$
and $d_{-1,1}$ always nonzero, and at least one of the $d_{0,1}$, $d_{1,0}$
or $d_{1,1}$ nonzero. Moreover, we have that
$A_{-1}(x) = d_{1,-1} x$ and $B_{-1}(y) = d_{-1,1} y$,
and since no model contains the step $(-1,-1)$,
the variable~$\varepsilon$ is always zero. Finally, for
every model of Figure~\ref{fig:gen0_supports},
the power series $Q(0,0)$ is equal to $1$.
Indeed, any nontrivial walk one of these models has an ending point
$(i,j)$ satisfying $i+j > 0$ by an easy induction. Hence setting both
$x$ and $y$ to $0$ in $Q(x,y)$ leaves only the term in $t^0$, which
is equal to $1$. Summarizing these simplifications, the
functional equation~(\ref{eq:eq_func_Qxy_general}) can be rewritten as
\begin{align*}
  K(x,y) Q(x,y) &= \frac{x y}{ab} \left(a+b-ab\right) \\
                &+ x \left(y - \frac{y}{a} - t d_{1,-1} x\right) Q(x,0)
                  + y \left(x - \frac{x}{b} - t d_{-1,1} y \right) Q(0,y).
\end{align*}
We now introduce the following notations:
\begin{equation}\label{notat:eq}
  \begin{aligned}
    A &\eqdef 1 - \frac{1}{a},
    &B &\eqdef 1 - \frac{1}{b},
    &\omega &\eqdef \frac{1}{ab}(a+b-ab), \\
    \gamma_1(x,y) &\eqdef \frac{A}{x} - \frac{t d_{1,-1}}{y},
    &\gamma_2(x,y) &\eqdef \frac{B}{y} - \frac{t d_{-1,1}}{x},
    &\gamma(x,y) &\eqdef \frac{\gamma_1(x,y)}{\gamma_2(x,y)}.
  \end{aligned}
\end{equation}
With these notations, the functional equation can finally be rewritten as
\begin{align} \label{eq:eq_func_Qxy_gen0}
  K(x,y) Q(x,y) &= \omega x y + x^2 y \gamma_1(x,y) Q(x,0) + x y^2 \gamma_2(x,y) Q(0,y).
\end{align}

In the remaining of the section, we show how to further exploit the particularity
of the genus~$0$ models to study (\ref{eq:eq_func_Qxy_gen0}), and in the end
classify $Q(x,y)$ for any weighting on one of these set of steps.

\subsubsection*{The kernel curve}

Given a weighted model, consider the kernel polynomial $K(x,y,t)$,
which belong to $\Q(d_{i,j})[x,y,t]$. For
every complex number~$t$, this polynomial defines a complex affine
curve in $\C^2$ defined as
\begin{align*}
  \Et &\eqdef \{ (x,y) \in \C \times \C \st K(x,y) = 0 \}.
\end{align*}
One considers the projective completion of this curve in $\P^1 \times \P^1$.
Here, $\P^1 = \P^1(\C)$ designates the complex projective line,
which may be defined as the quotient of $\C \times \C$ by
the equivalence relation \[
  (x_0,x_1) \sim (x'_0, x'_1) \iff \exists \lambda \in \C^{\star}, (\lambda x_0, \lambda x_1) = (x'_0,x'_1).\]
One denotes by $[x_0:x_1]$ the class of $(x_0,x_1)$. The set $\C$ embeds into $\P^1$
through the map $x \mapsto [x:1]$, and we furthermore denote $\infty := [1:0]$.
The projective
completion of $E_t$ in $\P^1 \times \P^1$ is then
\begin{align*}
  \Etproj &\eqdef \{ ([x_0:x_1],[y_0:y_1]) \in \P^1\times\P^1 \st x_1^2 y_1^2 K(\tfrac{x_0}{x_1}, \tfrac{y_0}{y_1}) = 0 \}.
\end{align*}

In~\cite{DreyfusHardouinRoquesSingerGenuszero2}
the authors examine the $\Etproj$ that occur 
for weighted models with small steps.
They prove that apart from trivial cases, the curve
$\Etproj$ is irreducible. When it is irreducible, they show that it
is either nonsingular of genus~$1$
or of genus~$0$ with a unique singular point~$\Omega = ([0:1],[0:1])$.
By abuse of notation, a weighted model is said to be of genus~$g$ if the curve
$\Etproj$ has genus~$g$.

It turns out that to study the models of genus~$0$,
it is enough to consider the five fundamental sets of steps
of Figure~\ref{fig:gen0_supports}, hence the qualification
of genus zero models.

\subsubsection*{Group and parametrization of the kernel curve}
When the kernel curve has genus~$0$, the authors of
\cite{DreyfusHardouinRoquesSingerGenuszero2}
construct a specific rational parametrization $\phi : \P^1 \rightarrow \Etproj$.
We summarize basic
facts and vocabulary on this parametrization, following
Section~4.1 of~\cite{DreyfusHardouinRoquesSingerGenuszero2}.

We first recall basic facts about the \emph{group of the walk}.
It has been well established since the beginning of the study
of quadrant walks in~\cite{FIM} or~\cite{BMM} that for any model of walk with small
steps, the projective curve~$\Etproj$ is equipped with two involutive
automorphisms $\iota_1$ and $\iota_2$. They
have the property that for all $(u,v) \in \P^1 \times \P^1$,
$\iota_1 (u,v) = (u,v')$ for some $v'$ and $\iota_2 (u,v) = (u',v)$
for some $u'$. For the models of genus zero,
their expression simplifies as follows:
\begin{align} \label{eq:def_i1}
  \iota_1([1:x_1],[1:y_1]) & = \left([1:x_1],\left[1:\frac{d_{-1,1} x_1^2 + d_{0,1} x_1 + d_{1,1}}{d_{1,-1} y_1}\right]\right), \\
  \label{eq:def_i2}\iota_2([1:x_1],[1:y_1]) & = \left(\left[1:\frac{d_{1,-1} y_1^2 + d_{1,0} y_1 + d_{1,1}}{d_{-1,1} x_1}\right],[1:y_1]\right).
\end{align}
Note that we choose to write them for points of $\P^1 \times \P^1$
written in homogeneous coordinates $([1:x_1],[1:y_1])$
for reasons detailed in Section~\ref{sect:decide_diff}.

The group of the walk is then defined as the group of automorphisms
of $\Etproj$ generated
by $\iota_1$ and $\iota_2$.
These two involutions induce an automorphism $\sigma$
of $\Etproj$ defined as \[\sigma
\eqdef \iota_2 \circ \iota_1.\]

As a side note, if $\tau$ is an automorphism of $\P^1$ and
$h(s) \in \C(s)$, then we will write $h^{\tau}(s) \eqdef h(\tau(s))$.
The reason for the exponential notation is because the composition
action of automorphisms of $\P^1$ on the function field of $\P^1$ is a right action,
so that $h^{\tau_1 \tau_2} = \left(h^{\tau_1}\right)^{\tau_2}$.

We are now going summarize the properties of the parametrization $\phi$ of~$\Etproj$
which was defined in~\cite{DreyfusHardouinRoquesSingerGenuszero}, and that we will use in the present paper.
It is constructed so that
the action of the group of the walk lifts through $\phi$ in a nice way.

We recall basic facts on \emph{divisors} of a function
field of an algebraic curve. We will be even more specific, and restrict to $\P^1$,
whose function field is $\C(s)$ with $s$ transcendental. A comprehensive
introduction to these notions is contained in~\cite{Stichtenothalgfunctionfields}.
\begin{defprop}[Chapter 1 of \cite{Stichtenothalgfunctionfields}] \label{prop:recap_divisors}
    A \emph{divisor} is a formal finite sum of points $D = \sum_{P \in \P^1} n_P P$ where
    $n_P$ are integers.
    The \emph{degree} of a divisor $D = \sum_{P} n_P P$ of $\P^1$ is defined
    as $\deg D = \sum_P n_P$. The map $D \mapsto \deg D$ is a group homomorphism.
    The following properties hold:
    \begin{enumerate}
    \item Let $h$ be a nonzero function in $\C(s)$.
      The function $h$ has finitely many zeros in $\P^1$.
      The \emph{zero divisor} of $h$ is thus defined as
      \[
        (h)_0 = \sum_{\mbox{$P$ zero of $h$}} \ord_P(h) \cdot P
      \]
      where $\ord_P(h)$ is the multiplicity of $P$ as a zero of $h$.
      Similarly, the \emph{polar divisor} $(h)_{\infty}$ of $h$ is defined as
      the zero divisor of $h^{-1}$.

    \item The \emph{principal divisor} associated to a nonzero function $h$ is defined as
      \[
        (h) = (h)_0 - (h)_{\infty}.
      \]
      It has the property that $(h) = 0$ if and only if $h \in \C$.
    \item For $u$, $v$ two nonzero functions in $\C(s)$,
      then $(1/u) = -(u)$ and $(uv) = (u) + (v)$.
    \item For $h \not\in \C$, the following holds:
      \[
        1 \le \deg (h)_0 = \deg (h)_{\infty} = [\C(s) : \C(h(s))] < \infty
      \]
      (recall that for $A$ an extension,
      $[A:k]$ denotes the dimension of $A$ as a $k$-vector space).
    \end{enumerate}
\end{defprop}

We may now state some properties of the parametrization $\P^1 \rightarrow \Etproj$.
\begin{proposition}[Section~4.1 of \cite{DreyfusHardouinRoquesSingerGenuszero2}] \label{prop:recap_gen0}
  For any model of Figure~\ref{fig:gen0_supports} and weighting $d_v$,
  and any real number $t \in (0,1)$
  transcendental over $\Q(d_{i,j},a,b)$, the following assertions hold.
  \begin{enumerate}
  \item There exists a
    rational parametrization
    $\phi : s \mapsto (x(s),y(s))$ of $\P^1$ onto $\Etproj$.
    The fractions $x(s)$ and $y(s)$
    both belong to $\overline{\Fpar(t)}(s)$, where
    $\overline{\Fpar(t)}$ is the algebraic closure of $\Fpar(t)$.
  \item The parametrization $\phi$
    is one-to-one everywhere except for $\phi(0) = \phi(\infty) = \Omega$,
    with $\Omega \eqdef  ([0:1],[0:1])$,
    i.e. $(x(0),y(0)) = (x(\infty),y(\infty)) = \Omega$.
  \item The divisors (Proposition~\ref{prop:recap_divisors})
    of the functions $x\eqdef x(s)$ and $y\eqdef y(s)$ on the curve $\P^1$ are
    \begin{align*}
      (x) = 0 + \infty - Q_1 - Q_2, &  & (y) = 0 + \infty - Q_3 - Q_4,
    \end{align*}
    for some points $Q_i \neq 0, \infty$ of $\P^1$ (that do not have to be distinct).

  \item The group lifts in the following way.
    There exists a real number $q \not\in \{-1,1\}$,
    with $q$ algebraic over $\Fpar(t)$, such that for all $s \in \P^1$ one has
    \[\iota_1(\phi(s)) = \phi(\tfrac{1}{s})
      \mbox{ and } \iota_2(\phi(s)) = \phi(\tfrac{q}{s}).\]
    When the context is clear, we will also denote by $\iota_1$,
    $\iota_2$ and $\sigma$ the automorphisms on $\P^1$ defined by
    \begin{align*}
      \iota_1(s) &\eqdef \tfrac{1}{s}
      & \iota_2(s) &\eqdef \tfrac{q}{s}
      & \sigma(s) &\eqdef qs.
    \end{align*}
    As the multiplicative order of $q$ is infinite, so is the order
    of $\sigma$. Moreover, the only points of $\P^1$ whose orbit
    under the action of $\sigma$ is finite are $0$ and $\infty$.
  \end{enumerate}
\end{proposition}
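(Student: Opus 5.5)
The plan is to reprove the statement by writing down the parametrization explicitly. The key observation is that the kernel equation becomes a conic in the coordinates $x_1 = 1/x$, $y_1 = 1/y$, which is why the involutions (\ref{eq:def_i1}) and (\ref{eq:def_i2}) are written in the homogeneous coordinates $([1:x_1],[1:y_1])$. Dividing $K(x,y)$ by $x^2y^2$ and substituting gives
\[
  \widehat K(x_1,y_1) \eqdef x_1y_1 - t\left(d_{-1,1}x_1^2 + d_{1,-1}y_1^2 + d_{0,1}x_1 + d_{1,0}y_1 + d_{1,1}\right) = 0 .
\]
For $t$ transcendental and the $d_v$ positive, I will check that this affine conic is nondegenerate. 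It meets the line at infinity in the two distinct directions given by the roots of $\lambda - t d_{-1,1} - t d_{1,-1}\lambda^2$; these are real and distinct for $t\in(0,1)$ small, and in general because the discriminant $1-4t^2d_{1,-1}d_{-1,1}$ is nonzero for $t$ transcendental. Both points at infinity are sent to $\Omega=([0:1],[0:1])$ in $\P^1\times\P^1$, and this accounts for the singular point.

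A smooth conic minus two points at infinity is isomorphic to $\C^*$. I will therefore seek $s\in\C^*$ with
\[
  x_1(s) = \alpha\left(s+\tfrac1s\right)+\beta, \qquad y_1(s) = \alpha'\left(\tfrac{s}{\mu}+\tfrac{\mu}{s}\right)+\beta',
\]
where $\mu^2=q$. The form of $x_1$ already makes $s\mapsto 1/s$ preserve $x_1$, and the form of $y_1$ makes $s\mapsto q/s$ preserve $y_1$. To build this, I first parametrize the conic by $C^*$ using a linear change of variables that sends the two asymptotic directions to the axes. The conic then reads $uv=\text{const}$, so $u=s$ and $v=\text{const}/s$. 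Next I rescale $s$ so that the nontrivial deck involution of $x_1$ (which swaps the two preimages of each $x_1$-value) is exactly $s\mapsto1/s$. Since $x_1$ is a degree-two function of $s$ with poles at $0$ and $\infty$, this involution is of the form $s\mapsto c/s$, and the rescaling turns $c$ into $1$.

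The same argument applied to $y_1$ produces an involution $s\mapsto q/s$, which defines $q$. Every coefficient comes from solving quadratic equations over $\Fpar(t)$, so $x(s)=1/x_1(s)$ and $y(s)=1/y_1(s)$ lie in $\overline{\Fpar(t)}(s)$ and $q$ is algebraic over $\Fpar(t)$. This gives (1) and (4) up to the properties of $q$. By construction, the involutions on $\Etproj$ commute with $\phi$, since $\iota_1$ is characterized as the involution fixing $x$, and likewise for $\iota_2$.

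Items (2) and (3) then follow from this form. The map $s\mapsto(x_1(s),y_1(s))$ is a bijection from $\C^*$ onto the affine conic, and $0,\infty$ both go to $\Omega$. The function $x = 1/x_1$ has simple zeros at $0$ and $\infty$, where $x_1$ has its simple poles, and two poles $Q_1,Q_2\in\C^*$ at the roots of $x_1$. This gives $(x)=0+\infty-Q_1-Q_2$, and the same reasoning gives $(y)$. The only fact needed about divisors is the degree equality in Proposition~\ref{prop:recap_divisors}, which confirms that there are exactly two poles counted with multiplicity.

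The main obstacle is showing that $q$ is real and $q\ne\pm1$. Once that is known, $q$ is not a root of unity, so $\sigma(s)=qs$ has infinite order and its only finite orbits are $\{0\}$ and $\{\infty\}$. For reality, I will take $q$ to be the ratio of the two roots defining $y_1$ relative to those defining $x_1$. These roots come from the asymptotic directions, which are real for $t\in(0,1)$ small, so $q$ is real; for transcendental $t$ I will argue through the explicit formula. To exclude $q=\pm1$, I would first try an asymptotic argument in $t$, expecting $q\sim c\,t^{k}$ with $k\neq0$ as $t\to0$, so that $q$ is nonconstant in $t$. Then $q=\pm1$ would make $t$ algebraic over $\Fpar$, contradicting transcendence. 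As a fallback, $q=1$ would force $\iota_1=\iota_2$ and $q=-1$ would force $\sigma^2=\id$, and I would rule both out by a direct check on $\widehat K$.
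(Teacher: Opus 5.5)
Your route is the one of the cited source (Section~4.1 of the genus-zero paper of Dreyfus--Hardouin--Roques--Singer). There the kernel curve is a conic in $(1/x,1/y)$, and the parametrization has exactly your form $1/x=\alpha(s+1/s)+\beta$, $1/y=\alpha'(s/\sqrt q+\sqrt q/s)+\beta'$. The nondegeneracy you postpone is immediate: the determinant of the conic is $-\tfrac t4 f_3$, where $f_3$ is the factor already shown nonzero in the proof of Proposition~\ref{prop:div_gamma}. For bijectivity onto $\overline{E_t}$, also note that $\Omega$ is the only point of $\overline{E_t}$ with $x=0$ or $y=0$, since $K(0,y)=-t d_{-1,1}y^2$, and symmetrically. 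Your identification of $q$ as the ratio $\lambda_2/\lambda_1$ of the two asymptotic slopes is correct, and it gives $q+q^{-1}+2=1/(t^2 d_{1,-1}d_{-1,1})$.

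The step that cannot be carried out as announced is proving reality of $q$ ``for transcendental $t$ through the explicit formula''. By the identity above, $q$ is a root of $z^2-Rz+1$ with $R=1/(t^2d_{1,-1}d_{-1,1})-2>-2$. Hence $q$ is real if and only if $4t^2d_{1,-1}d_{-1,1}<1$; otherwise the asymptotic directions are complex conjugate and $q$ is non-real with $|q|=1$. With the arbitrary positive weights of this paper, $t\in(0,1)$ does not guarantee this (take $d_{1,-1}=d_{-1,1}=10$). In the cited source it follows from the normalization $\sum_v d_v=1$, which gives $4t^2d_{1,-1}d_{-1,1}\le t^2(d_{1,-1}+d_{-1,1})^2<1$. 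So you must add this hypothesis, for instance by shrinking $t$. It is not cosmetic: $|q|\neq 1$ is exactly what the paper uses to continue $x(s)Q(x(s),0)$ meromorphically to $\mathbb{C}$. What does hold for every transcendental $t$, directly from the identity and without your asymptotic detour, is that $q\neq\pm1$ and $q$ is not a root of unity. Otherwise $q+q^{-1}$, hence $t^2$, would be algebraic over $\Fpar$. This already gives the infinite order of $\sigma$ and that $0$ and $\infty$ are the only points with finite orbit.
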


\begin{proposition}[Section~1.3 of \cite{DreyfusHardouinRoquesSingerGenuszero}] \label{prop:recap_gal}
  The function field $\C(s)/\C$ of $\P^1$ has
  the following lattice.
  \begin{center}
    \includegraphics[height=5cm]{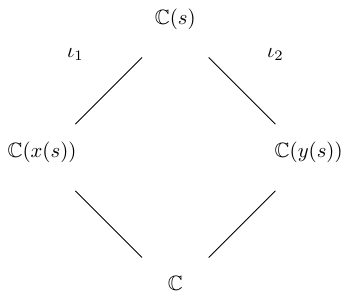}
  \end{center}
  \begin{enumerate}
  \item The extension $\C(s) / \C(x(s))$ is Galois of degree $2$,
    with Galois group generated by the involution $\iota_1$.
    This means that if $h = h^{\iota_1}$, then $h \in \C(x(s))$.
  \item Similarly, the extension $\C(s) / \C(y(s))$ is Galois of degree $2$,
    with Galois group generated by the involution $\iota_2$.
  \item The field of constants $\C$ is the intersection $\C(x(s)) \cap \C(y(s))$,
    and as a result it is also the subfield of functions fixed
    by $\iota_1$ and $\iota_2$.
    Moreover, if $f(s)$ is fixed by $\sigma$, then $f(s)$ belongs to $\C$.
  \end{enumerate}
\end{proposition}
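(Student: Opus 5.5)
The plan is to derive all three items from Proposition~\ref{prop:recap_gen0} and the divisor facts of Definition/Proposition~\ref{prop:recap_divisors}, using Artin's theorem on fixed fields of finite groups of automorphisms.

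For item (1), I first compute the degree $[\C(s):\C(x(s))]$. By Definition/Proposition~\ref{prop:recap_divisors}(4), it equals $\deg (x)_\infty$. By Proposition~\ref{prop:recap_gen0}(3) we have $(x)_\infty = Q_1+Q_2$, so the degree is $2$.

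Next I check that $\iota_1\colon s\mapsto 1/s$ fixes $x(s)$. Since $\iota_1(\phi(s))=\phi(1/s)$, and $\iota_1$ on $\Etproj$ preserves the first coordinate by~\eqref{eq:def_i1}, we get $x(1/s)=x(s)$. Thus $x^{\iota_1}=x$, and so $\iota_1$ acts as a field automorphism of $\C(s)$ fixing $\C(x(s))$.

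The involution $\iota_1$ is not the identity on $\C(s)$, because $s^{\iota_1}=1/s\neq s$. The group $\{\id,\iota_1\}$ has order $2$. By Artin's theorem, its fixed field $L$ satisfies $[\C(s):L]=2$. Since $\C(x(s))\subseteq L$ and both have index $2$ in $\C(s)$, we conclude $L=\C(x(s))$. Hence the extension is Galois with group generated by $\iota_1$, and any $h$ with $h=h^{\iota_1}$ lies in $\C(x(s))$.

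Item (2) is the same argument. It uses $(y)_\infty=Q_3+Q_4$, the relation $\iota_2(\phi(s))=\phi(q/s)$, and the fact from~\eqref{eq:def_i2} that $\iota_2$ preserves the second coordinate, which gives $y(q/s)=y(s)$.

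For item (3), the key step, which I expect to be the main obstacle, is to show that if $f(qs)=f(s)$ then $f\in\C$. I would argue by contradiction, supposing $f$ is nonconstant, and split into two cases.

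\begin{itemize}
\item \emph{Some zero or pole $P\notin\{0,\infty\}$.} Suppose $f$ has a zero $P\notin\{0,\infty\}$. Since $f^{\sigma}=f$, the divisor $(f)$ is $\sigma$-invariant, so every $\sigma^n P$ is also a zero of $f$. By Proposition~\ref{prop:recap_gen0}(4), the orbit of $P$ is infinite. This contradicts the finiteness of the zero set in Definition/Proposition~\ref{prop:recap_divisors}(1). The same argument applies to a pole $P\notin\{0,\infty\}$.
\item \emph{All zeros and poles in $\{0,\infty\}$.} Otherwise all zeros and poles of $f$ lie in $\{0,\infty\}$. Then $f/s^n$ has trivial divisor for a suitable $n\in\Z$, so $f=c\,s^n$ by Definition/Proposition~\ref{prop:recap_divisors}(2). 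Substituting into $f(qs)=f(s)$ gives $q^n=1$. Since $q$ is real and $q\neq\pm1$, this forces $n=0$, so $f$ is constant, again a contradiction.
\end{itemize}

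Finally, I deduce the statement about the intersection. An element of $\C(x(s))\cap\C(y(s))$ is fixed by both $\iota_1$ and $\iota_2$, hence by $\sigma=\iota_2\circ\iota_1$, hence constant by the previous step. Conversely, constants are clearly fixed by both involutions. By items (1) and (2), the field fixed by both $\iota_1$ and $\iota_2$ is exactly $\C(x(s))\cap\C(y(s))$. This gives both descriptions of $\C$ in item (3).
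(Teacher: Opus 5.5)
Your proof is correct. It establishes all three items:
\begin{itemize}
\item For items (1) and (2), the degree count $[\C(s):\C(x(s))]=\deg(x)_\infty=2$ (resp.\ for $y$) combines with Artin's theorem for the order-two group generated by $s\mapsto 1/s$ (resp.\ $s\mapsto q/s$).
\item For item (3), zero/pole propagation under $\sigma$ plus the observation that a real $q\neq\pm1$ is not a root of unity show that $\sigma$-invariant rational functions are constant.
\end{itemize}
The paper gives no argument of its own and simply cites Section~1.3 of \cite{DreyfusHardouinRoquesSingerGenuszero}, so your proof supplies a self-contained derivation that uses only facts already recorded in Proposition~\ref{prop:recap_gen0} and Definition/Proposition~\ref{prop:recap_divisors}.
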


\subsection{The \texorpdfstring{$\boldsymbol{q}$}{q}-difference equations}
\label{sec:qdiff-equ}
We now fix the setting in which we can evaluate the functional
equation~(\ref{eq:eq_func_Qxy_gen0}) for $Q(x,y)$
on the curve $\Etproj$ through the parametrization $\phi$,
in the manner of Section~2.1 of~\cite{DreyfusHardouinRoquesSingerGenuszero2}.
Once the conditions of this evaluation are fixed, this
will transform the catalytic equation on $Q(x,y)$ (a power series in $x$ and $y$)
into a functional equation relating $Q(x(s),0)$ and $Q(0,y(s))$
(meromorphic functions over~$\C~\subset~\P^1$).
The symmetries of $x(s)$ and $y(s)$ with respect
to the group (Proposition~\ref{prop:recap_gen0})
will then allow us to construct a
functional equation on $Q(x(s),0)$ only.

We first study the divisors of the functions
\begin{equation}  \label{eq:20}
  \begin{aligned}
  \gamtld_1(s) &\eqdef \gamma_1(x(s),y(s))
  & \gamtld_2(s) &\eqdef \gamma_2(x(s),y(s))
  \end{aligned}
\end{equation}
on $\P^1$ through a routine computation.
Recall that
\begin{align*}
  \gamma_1(x,y) &= \frac{A}{x} - t \frac{d_{1,-1}}{y}
  & \gamma_2(x,y) &= \frac{B}{y} - t \frac{d_{-1,1}}{x},
\end{align*}
as defined in (\ref{notat:eq}).
\begin{proposition} \label{prop:div_gamma}
  For any real number $t$ defined as in Proposition~\ref{prop:recap_gen0},
  the extensions $\C(s) / \C(\gamtld_1)$ and $\C(s) / \C(\gamtld_2)$
  have degree two, and the functions $\gamtld_1$ and $\gamtld_2$ have
  the following divisors:
  \begin{align*}
    (\gamtld_1) &= P_1 + P_2 - 0 - \infty,
    & (\gamtld_2) &= P_3 + P_4 - 0 - \infty.
  \end{align*}
  The points $P_1$, \dots, $P_4$ are
  distinct from $0$ and~$\infty$.
\end{proposition}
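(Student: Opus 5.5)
We compute the divisor of $\gamtld_1 = \frac{A}{x(s)} - \frac{t d_{1,-1}}{y(s)}$ from the divisors of $x$ and $y$ given in Proposition~\ref{prop:recap_gen0}(3); $\gamtld_2$ is handled symmetrically. Write
\[
\gamtld_1 = \frac{A\,y(s) - t d_{1,-1}\, x(s)}{x(s)y(s)}.
\]
By Proposition~\ref{prop:recap_gen0}(3), the denominator has divisor $(xy) = 2\cdot 0 + 2\cdot\infty - Q_1-Q_2-Q_3-Q_4$. The numerator $N(s) = A y(s) - t d_{1,-1} x(s)$ has poles contained in $\{Q_1,\dots,Q_4\}$, so the poles of $\gamtld_1$ lie in $\{0,\infty\}$ and possibly at points $Q_i$ where $N$ has a pole of order exceeding the pole order of $xy$. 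Since $\ord_{Q_i}(N) \ge -\max$ of the pole orders of $x,y$, which is at most the pole order of $xy$, no pole arises at any $Q_i$. Hence $\gamtld_1$ has poles only at $0$ and $\infty$.

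Next we determine the pole orders at $0$ and $\infty$. Both $x$ and $y$ have simple zeros at $0$ (and at $\infty$), so near $s=0$ we write $x = \alpha s + O(s^2)$ and $y=\beta s + O(s^2)$, with $\alpha,\beta\neq 0$. Then
\[
\gamtld_1 = \frac{1}{s}\Bigl(\frac{A}{\alpha} - \frac{t d_{1,-1}}{\beta}\Bigr) + O(1).
\]
So $0$ is a simple pole exactly when $A\beta \neq t d_{1,-1}\alpha$, and likewise at $\infty$. This nondegeneracy is the main obstacle. The ratio $\beta/\alpha = \lim_{s\to0} y/x$ is a tangent slope of the kernel curve at the node $\Omega$. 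These slopes are the roots of the quadratic part of $K$ at the origin, namely $-t d_{1,-1}x^2 + xy - t d_{-1,1} y^2$ (for $\calS_1$; in general, the degree-two part of $xy - tS\cdot xy$). Substituting $y=\lambda x$ gives $t d_{-1,1}\lambda^2 - \lambda + t d_{1,-1} = 0$. The degenerate condition $\lambda = t d_{1,-1}/A$ then becomes a polynomial relation in $t$ with coefficients in $\Fpar$. It is nontrivial: if $A\ne0$ the equation reads $t^3 d_{-1,1}d_{1,-1}^2/A^2 - t d_{1,-1}/A + t d_{1,-1}=0$, i.e.\ $t^2 d_{-1,1}d_{1,-1}/A^2 = 1 - A = 1/a$. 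This cannot hold for $t$ transcendental over $\Fpar$.

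The case $A=0$ (i.e.\ $a=1$) needs separate treatment, since then $\gamtld_1 = -t d_{1,-1}/y$. In that case $y$ has simple zeros exactly at $0,\infty$, so $\gamtld_1$ still has simple poles at $0$ and $\infty$, with zeros $P_1,P_2 = Q_3,Q_4$, which are distinct from $0,\infty$.

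In general, $(\gamtld_1)_\infty = 0+\infty$, which has degree $2$. By Definition/Proposition~\ref{prop:recap_divisors}(4), $[\C(s):\C(\gamtld_1)] = 2$ and the zero divisor also has degree $2$; call it $P_1+P_2$. These zeros are distinct from $0,\infty$ because those points are poles. The same argument with the roles of $x,y$, $a,b$ and $d_{1,-1},d_{-1,1}$ exchanged gives the result for $\gamtld_2$. The genericity of $t$ rules out the analogous relation $t^2 d_{1,-1}d_{-1,1}/B^2 = 1/b$.
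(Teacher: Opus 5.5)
Your proof is correct, but it takes a different route from the paper's. The paper first proves $[\C(s):\C(\gamtld_1)]=2$ algebraically:
\begin{itemize}
\item it eliminates $y$ to get an explicit quadratic $P(X)$ over $\C[\gamtld_1]$ vanishing at $x$;
\item it shows $P$ is irreducible by proving its discriminant is not a square, factoring the discriminant of that discriminant as $16t^2 f_1 f_2 f_3$ and excluding each factor via transcendence of $t$ and an ideal-containment computation;
\item it uses $\C(x,\gamtld_1)=\C(x,y)=\C(s)$ to conclude the degree is $2$.
\end{itemize}
Only then does it note that the poles of $A/x - t d_{1,-1}/y$ lie in $\{0,\infty\}$ with order at most one, so $\deg(\gamtld_1)_\infty=2$ forces two simple poles.

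You go the other way. You get the polar divisor directly from a local expansion at the node, using that the branch slopes at $s=0$ and $s=\infty$ are roots of the quadratic part of $K$. You then read off the degree from Definition/Proposition~\ref{prop:recap_divisors}(4). Your route is shorter, done entirely by hand, and needs only the nonvanishing of a single polynomial in $t$. The paper's route relies on a computer-assisted factorization, but in exchange it produces an explicit minimal polynomial of $x$ over $\C(\gamtld_1)$.

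There is one arithmetic slip. Dividing $t^3 d_{-1,1}d_{1,-1}^2/A^2 - t d_{1,-1}/A + t d_{1,-1}=0$ by $t d_{1,-1}$ gives $t^2 d_{-1,1}d_{1,-1}/A^2=(1-A)/A$, not $1-A$. Equivalently, $d_{1,-1}d_{-1,1}t^2+A^2-A=0$, and the analogue for $\gamtld_2$ has $(1-B)/B$. The slip is harmless, because the relation is still nontrivial over $\Fpar$. It is also exactly the paper's factor $f_2$, which is likewise the constant coefficient of $P(X)$, so it serves as a consistency check.

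Your first step can be shortened. $A/x$ and $t d_{1,-1}/y$ are regular away from the zeros of $x$ and $y$, which are the simple zeros at $0$ and $\infty$. So there is no need to pass through $N/(xy)$.
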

\begin{proof}
  In this proof, we write
  $x = x(s)$ and $y = y(s)$, these functions thus
  satisfying $K(x,y) = 0$ for $K(X,Y)$ the kernel polynomial.
  We only perform the proof for $\gamtld_1 = \frac{A}{x}-\frac{t d_{1,-1}}{y}$,
  the study of $\gamtld_2$ being symmetric.
  The computations can be followed in the Maple worksheet.
  We are going to prove that the minimal polynomial
  of $x$ over $\C(\gamtld_1)$ has degree $2$, thus showing that $[\C(x):\C(\gamtld_1)] = 2$.
  We will then prove that $\gamtld_1$ has the announced poles $0$ and $\infty$.

  We first produce a vanishing polynomial of $x$ over $\C(\gamtld_1)$.
  By definition, the polynomial $K(X,y) = 0$ is a vanishing polynomial
  of $x$ over $\C(y)$. Hence, expressing $y$ in terms of $x$ and $\gamtld_1$
  (which we can do since $t d_{1,-1}$
  is nonzero), we are left
  to consider the polynomial
  \begin{align*}
    P(X) &=  (d_{1,1} d_{1,-1} t^2 - d_{1,0} t \gamtld_1 + \gamtld_1^2) X^2 \\
         &+ (d_{0,1} d_{1,-1} t^2 + A d_{1,0} t + (1 - 2 A) \gamtld_1) X \\
         &+ d_{1,-1} d_{-1,1} t^2 + A^2 - A,
  \end{align*}
  which by construction is a vanishing polynomial of $x$
  with coefficients in $\C[\gamtld_1] \subset \C(\gamtld_1)$.
  Moreover, $P(X)$ is nonzero, for its constant coefficient
  $d_{1,-1} d_{-1,1} t^2 + A^2- A \in \Fpar[t]$ is nonzero. Indeed,
  the real number $t$ is transcendental over $\Fpar$ and the coefficient
  $d_{1,-1} d_{-1,1}$ of $t$ is nonzero for all genus zero models
  (Figure~\ref{fig:gen0_supports}).
  Hence, $x$ is algebraic over $\C(\gamtld_1)$.

  We now show that $P(X)$ is irreducible
  in $\C(\gamtld_1)[X]$.
  First note that the leading coefficient
  $d_{1,1} d_{1,-1} t^2 - d_{1,0} t \gamtld_1 + \gamtld_1^2$
  of $P(X)$ is nonzero. Indeed, it is a nonzero polynomial of $\C[\gamtld_1]$,
  with $\gamtld_1$ transcendental over $\C$
  (the function $x$ is both transcendental over $\C$ and algebraic over $\C(\gamtld_1)$).
  Hence, $P(X)$ is a degree two polynomial.
  In order to show that it is irreducible, we thus compute its
  discriminant $\Delta \in \C(\gamtld_1)$
  and show that it cannot be a square in $\C(\gamtld_1)$.
  The discriminant is expressed as follows:
  \begin{align*}
    \Delta &= (-4 d_{1,-1} d_{-1,1} t^2 + 1) \gamtld_1^2 \\
           &+ (4 d_{1,0} d_{1,-1} d_{-1,1} t^3 - 4 A d_{0,1} d_{1,-1} t^2 + 2 d_{0,1} d_{1,-1} t^2 - 2 A d_{1,0} t) \gamtld_1 \\
           &+ (d_{0,1}^2 d_{1,-1}^2 t^4 - 4 d_{1,1} d_{1,-1}^2 d_{-1,1} t^4 + 2 A d_{0,1} d_{1,0} d_{1,-1} t^3 \\
           &\quad + A^2 d_{1,0}^2 t^2 - 4 A^2 d_{1,1} d_{1,-1} t^2 + 4 A d_{1,1} d_{1,-1} t^2).
  \end{align*}
  We see that $\Delta$ belongs to $\C[\gamtld_1]$, hence $\Delta$ is a square
  in $\C(\gamtld_1)$ if and only if $\Delta$ is a square in $\C[\gamtld_1]$.
  In turn, as $\Delta$ has degree two as a polynomial
  in $\gamtld_1$ (the leading coefficient
  $-4 d_{1,-1} d_{-1,1} t^2 + 1$ is nonzero since $t$ is transcendental
  over $\Fpar$), $\Delta$ is a square if
  and only if its discriminant $\delta$ with respect to the variable
  $\gamtld_1$ is zero.
  The discriminant $\delta$ factors in $\Q[d_{i,j},A][t]$ into
  $\delta = 16 t^2 f_1 f_2 f_3$, the factors $f_i$ written in the table below.
  \begin{table}[!h]
    \centering
    \begin{tabular}{|c|c|c|}\hline
      $f_1$ & $d_{1,-1}$ \\ \hline
      $f_2$ & $d_{1,-1} d_{-1,1} t^2 + A(A-1)$ \\\hline
      $f_3$ & $(d_{0,1}^2 d_{1,-1}  + d_{1,0}^2 d_{-1,1}  - 4 d_{1,1} d_{1,-1} d_{-1,1}) t^2 + d_{0,1} d_{1,0} t + d_{1,1}$ \\\hline
    \end{tabular}
  \end{table}

  As $t$ is transcendental over $\Fpar$,
  the polynomial $\delta$
  is zero if and only if one $f_i$ is zero
  if and only if all coefficients of one $f_i$ viewed as
  a polynomial in $\Fpar[t]$ are zero.
  Define an ideal $I$ of $\Z[d_{i,j},A]$ by
  $I = d_{1,-1} d_{-1,1} (d_{1,1}, d_{0,1}, d_{1,0})$,
  and for $i \in \{1,2,3\}$ the ideal $J_i$ of $\Z[d_{i,j},A]$
  generated by the coefficients in $t$
  of $f_i$. Then one sees that $I \subset J_1$,
  $I \subset J_2$, and finally through elimination
  that $I^3 \subset J_3$. Hence,
  if $\delta$ is zero, then
  $(d_{1,-1}, d_{-1,1}, d_{0,1}, d_{1,0}, d_{1,1}, A)$ must satisfy
  either $d_{1,-1} = 0$, or $d_{-1,1} = 0$,
  or $d_{1,0} = d_{0,1} = d_{1,1} = 0$, which is
  never the case given the constraints
  on the supports (Figure~\ref{fig:gen0_supports}).
  Therefore, $\delta$ is always nonzero,
  $\Delta$ is never a square, and we conclude that $P(X)$ is
  always irreducible in $\C(\gamtld_1)[X]$.
  Thus, since $P$ is an irreducible vanishing polynomial of $x$,
  and since $\C(x,\gamtld_1) = \C(x,y)$, this proves that $[\C(x,y):\C(\gamtld_1)]
  = [\C(x,\gamtld_1):\C(\gamtld_1)] = 2$.

  We now conclude on the poles of $\gamtld_1$.
  By Proposition~\ref{prop:recap_gen0}, the divisors of $x$ and $y$ are respectively
  $(x) = 0 + \infty - Q_1 - Q_2$ and $(y) = 0 + \infty - Q_3 - Q_4$ for
  some points $Q_i$.
  Hence, the poles of $\gamtld_1 = \tfrac{A}{x} - t \tfrac{d_{1,-1}}{y}$
  are either $0$ or $\infty$, which both have order at most $1$.
  But by Proposition~\ref{prop:recap_divisors},
  $\deg (\gamtld_1)_{\infty} = [\C(s) : \C(\gamtld_1)] = 2$,
  thus we conclude that $\gamtld_1$ has these two poles,
  and thus $(\gamtld_1)_{\infty} = 0 + \infty$.
\end{proof}

We now determine for which real numbers $t$ the composition of the functional
equation~(\ref{eq:eq_func_Qxy_gen0}) with $(x(s),y(s))$ (which both
depend on $t$) is well defined.
\begin{proposition} \label{prop:holom_eq}
  There exists a positive real number $r > 0$ such that for
  every real number $t < r$ transcendental over $\Fpar$,
  there exist two open sets $U_0$ and $U_{\infty}$ of $\P^1$ such that $0 \in U_0$,
  $\infty \in U_{\infty}$, and so that the functions $Q(x(s),y(s))$,
  $Q(x(s),0)$, $Q(0,y(s))$,
  are analytic on $U_0 \cup
  U_{\infty}$.
  Moreover, there exists an open set $V$ satisfying $0 \in V \subset U_0$ such that
  $\iota_2(V) \subset U_{\infty}$ and $\sigma^{-1}(V)~\subset~U_{0}$.
\end{proposition}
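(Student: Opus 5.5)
The plan is to combine a crude convergence estimate for $Q$ with the fact that $x(s)$ and $y(s)$ vanish at both $0$ and $\infty$ (Proposition~\ref{prop:recap_gen0}, item 3). Since $0$ and $\infty$ are the only points of $\Etproj$ mapped to $\Omega=([0:1],[0:1])$, neighbourhoods of $0$ and $\infty$ are sent near the origin of $\C^2$, where $Q$ should be analytic.

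First I will bound the coefficients of $Q$. There are at most $|\calS|^n$ walks of length $n$, and each has weight at most $M^n$ with $M \eqdef \max(\max_v d_v,1)\cdot\max(a,1)\cdot\max(b,1)$, because each step contributes at most one contact with each axis. The endpoint satisfies $i,j\le n$. Hence for $|x|,|y|\le 1$ the series $Q(x,y)=\sum_n t^n \sum_{w} \mathrm{weight}(w)x^iy^j$ is dominated by $\sum_n (|\calS| M |t|)^n$. Setting $r\eqdef 1/(|\calS|M)$, for every $t<r$ the series $Q(x,y)$ converges normally, and is therefore analytic, on the closed bidisc $\{|x|\le1,|y|\le 1\}$. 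Some open neighbourhood of it works as well, for instance by choosing $r$ with a little slack, e.g.\ $r\eqdef 1/(2|\calS|M)$ and radius $2$. The same bound applies to $Q(x,0)$ and $Q(0,y)$. The value of $r$ depends only on the weights and not on $t$, which is what the statement requires. If $t$ is also required to be transcendental and in $(0,1)$, as in Proposition~\ref{prop:recap_gen0}, we simply restrict to such $t$.

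Next, for a fixed admissible $t$, the parametrization $x(s),y(s)$ is rational in $s$ and vanishes at $0$ and at $\infty$. By continuity on $\P^1$, there are open sets $U_0\ni 0$ and $U_\infty\ni\infty$, avoiding the poles $Q_1,\dots,Q_4$, on which $|x(s)|<1$ and $|y(s)|<1$. Composing the analytic function $Q$ with the holomorphic map $s\mapsto(x(s),y(s))$ shows that $Q(x(s),y(s))$, $Q(x(s),0)$ and $Q(0,y(s))$ are analytic on $U_0\cup U_\infty$. Near $\infty$ this is meant in the chart $1/s$.

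Finally, I will construct $V$. The map $\iota_2(s)=q/s$ sends $0$ to $\infty$ and is continuous on $\P^1$, so $\iota_2^{-1}(U_\infty)$ is an open neighbourhood of $0$. The map $\sigma^{-1}(s)=s/q$ fixes $0$, so $\sigma(U_0)=qU_0$ is also an open neighbourhood of $0$. I will take
\[
V \eqdef U_0\cap \iota_2^{-1}(U_\infty)\cap \sigma(U_0),
\]
which is open and contains $0$. It then satisfies $V\subset U_0$, $\iota_2(V)\subset U_\infty$ and $\sigma^{-1}(V)\subset U_0$.

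The only step requiring care is the first one: the analyticity of $Q$ on a uniform polydisc has to be established with a radius $r$ independent of $t$. The remaining steps are continuity arguments.
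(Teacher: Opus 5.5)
Your proof is correct and follows essentially the same route as the paper. Both arguments use a crude bound on the coefficient of $t^n$ to get convergence of $Q$ on the unit bidisc for small $t$, then compose with $(x(s),y(s))$, which vanish at $0$ and $\infty$, and take the same $V = U_0\cap\iota_2^{-1}(U_\infty)\cap\sigma(U_0)$. Two small remarks: your weight bound with $\max(\cdot,1)$ is slightly more careful than the paper's $M^{2n}$, and the ``slack'' remark is unnecessary because the open unit bidisc suffices. As written that remark's constant is also off, since $|x^iy^j|\le 2^{i+j}\le 4^n$ on the radius-$2$ bidisc.
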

\begin{proof}
  We are going to show that for $t$ small enough,
  the series $Q(x,y) \in \C[x,y]\llbracket t\rrbracket$ is convergent on $\{(x,y) \in \C \st |x|, |y| < 1\}$.
  Let $|x|, |y| < 1$, and $M \eqdef  \sup~\{|d_{i,j}|, |a|, |b|\}$ (note that $M$ is positive).
  Then each walk $w$ of length $n$
  contributes to a value of norm at most $M^{2n}$
  to the coefficient of $t^n$ in $Q(x,y)$. Indeed,
  each performed step
  involves at most two weights (one $d_{i,j}$ and possibly one additional $a$ or $b$).
  As the steps are small, the coefficient corresponding to $w$ has norm at
  most $M^{2n} |x|^n |y|^n \le M^{2n}$, for $|x|, |y| < 1$.
  Moreover, the set of steps of any
  of the considered weighted models is finite of cardinal
  at most $5$ (Figure \ref{fig:gen0_supports}),
  hence the coefficient of $t^n$ in $Q(x,y)$ has norm of at most
  $\left(5 M^2\right)^n$.
  Hence, when $|x|, |y| < 1$, the
  power series $Q(x,y) \in \C(x,y)\llbracket t\rrbracket$ has a positive radius of convergence $\rho \ge \frac{1}{5 M^2}$.
  We thus fix a real number $t$ so that $0 < t < \rho$
  and $t$ is transcendental over the field $\Fpar$.
  Since
  the coefficients in $t$ of $Q(x,y)$ are polynomials in $x$ and $y$,
  the function $Q(x,y)$ is analytic in $x$ and $y$ at
  $(0,0)$.

  We now study the convergence of the composition of
  the functions appearing in (\ref{eq:eq_func_Qxy_gen0})
  with the parametrization $\phi(s) = (x(s),y(s))$.
  First,
  the functions $x(s)$ and $y(s)$ belong to $\C(s)$
  with $x(0) = y(0) = x(\infty) = y(\infty) = 0$,
  thus they are both analytic at the points $0$ and $\infty$.
  Thus, by composition, the functions
  $Q(x(s),y(s))$, $Q(x(s),0)$ and $Q(0,y(s))$ are analytic at $0$ and
  $\infty$.
  This proves the existence of the two announced open sets
  $U_0 \ni 0$ and $U_{\infty} \ni \infty$.

  Finally, we construct $V$ to be $U_0 \cap \iota_2^{-1}(U_{\infty})
  \cap \sigma(U_0)$. This is open because $\iota_2(s) = \frac{q}{s}$
  and $\sigma(s) = q s$, which are continuous functions
  in $\P^1 \rightarrow \P^1$.
  Moreover, $V$ contains $0$ because $\iota_2(\infty) = \iota_1(\infty) =
  \sigma(0) = \sigma^{-1}(0) = 0$.
\end{proof}

We now fix some small enough $t$ transcendental over $\Fpar$
prescribed by Proposition~\ref{prop:holom_eq},
and a parametrization $\phi$ accordingly.
The evaluation of (\ref{eq:eq_func_Qxy_gen0}) on
$(x(s),y(s))$ for $s$ in $U_0 \cup U_{\infty}$
is thus well defined, yielding after dividing by $x(s) y(s)$ the
following equation of meromorphic functions on~$U_{0} \cup U_{\infty}$:
\begin{equation}  \label{eq:func_curve}
  0 = \omega + x(s) \gamtld_1(s) Q(x(s),0) + y(s) \gamtld_2(s) Q(0,y(s)).
\end{equation}
Define two analytic functions on the open set $V$ by
\begin{align*}
  \Fcap(s) &\eqdef  x(s) Q(x(s),0), & \Gcap(s) &\eqdef  y(s) Q(0,y(s)).
\end{align*}

We now use (\ref{eq:func_curve}) to
construct meromorphic continuations $\Ftld$ of $\Fcap$ and $\Gtld$ of $\Gcap$
to the whole complex plane~$\C$, by showing
like in \cite{DreyfusHardouinRoquesSingerGenuszero} that $\Fcap$ satisfies a $q$-difference equation.
We introduce the function
\begin{equation} \label{eq:intro_gamma}
  \gamtld(s) \eqdef \gamma(x(s),y(s)) = \frac{\gamtld_1(s)}{\gamtld_2(s)},
\end{equation}
where $\gamtld_1$ and $\gamtld_2$ were introduced in (\ref{eq:20}),
and we rewrite (\ref{eq:func_curve}) as follows for $s$ in $V$.
\begin{equation} \label{eq:interm_1}
  -y(s) Q(0,y(s)) = \frac{\omega}{\gamtld_2(s)} + \gamtld(s) x(s)Q(x(s),0).
\end{equation}

By Proposition~\ref{prop:holom_eq}, if $s$
is in $V$ then $\frac{q}{s} = \iota_2(s) \in U_{\infty}$,
hence from (\ref{eq:func_curve}) we also have the following equation
for $s$ in $V$:
\begin{equation} \label{eq:interm_2}
  -y(\tfrac{q}{s}) Q(0,y(\tfrac{q}{s})) = \frac{\omega}{\gamtld_2(\tfrac{q}{s})} + \gamtld(\tfrac{q}{s})
  x(\tfrac{q}{s})Q(x\left(\tfrac{q}{s}\right),0).
\end{equation}

We now use the symmetries of the functions $x(s)$ and $y(s)$.
For all $s$ in $V$, we have by Proposition~\ref{prop:recap_gen0}
that $y(\tfrac{q}{s}) = y(s)$,
hence $-y(\tfrac{q}{s})Q(0,y(\tfrac{q}{s})) = -y(s) Q(0,y(s))$. Moreover,
we also have $x(\tfrac{q}{s}) = x(\tfrac{s}{q})$,
hence $x(\tfrac{q}{s}) Q(x(\tfrac{q}{s}),0)
= x(\tfrac{s}{q})Q(x(\tfrac{s}{q}),0)$.
Finally, as $s$ is in $V$, the complex number $\tfrac{s}{q} = \sigma^{-1}(s)$ is also in
$U_0$ by Proposition~\ref{prop:holom_eq}, so
we can replace $x(\tfrac{q}{s}) Q(x(\tfrac{q}{s}),0)$
with $\Fcap(\tfrac{s}{q})$ in (\ref{eq:interm_2}).
Hence, eliminating $y(s) Q(0,y(s))$ between
(\ref{eq:interm_1}) and (\ref{eq:interm_2}) yields
the following $q$-difference equation on $\Fcap(s)$ for all $s$ in $V$:
\begin{equation*}
  \Fcap(\tfrac{s}{q}) = \frac{\gamtld}{\gamtld^{\iota_2}}(s) \Fcap(s) + \left(\frac{\omega}{\gamtld_2(s)}
    - \frac{\omega}{\gamtld_2^{\iota_2}(s)}\right) \frac{1}{\gamtld^{\iota_2}(s)}.
\end{equation*}
As the absolute value of $q$ is not equal to $1$ (Proposition~\ref{prop:recap_gen0}), this functional
equation allows us to construct a unique continuation $\Ftld$ of $\Fcap$
meromorphic on the whole complex plane $\C$, satisfying the same
equation:
\begin{equation} \label{eq:f_qdiff}
  \Ftld(\tfrac{s}{q}) = \frac{\gamtld}{\gamtld^{\iota_2}}(s) \Ftld(s) + \left(\frac{\omega}{\gamtld_2(s)}
    - \frac{\omega}{\gamtld_2^{\iota_2}(s)}\right) \frac{1}{\gamtld^{\iota_2}(s)}.
\end{equation}
Indeed, assuming that $\Ftld$ is a meromorphic continuation of $\Fcap$ on some open set $U$,
the functional equation (\ref{eq:f_qdiff})
relates $\Ftld(s)$ with $\Ftld(\tfrac{s}{q})$
over $\C(s)$, which
allows us to extend uniquely $\Ftld$
as a meromorphic function on $qU \cup U \cup q^{-1} U$. As $|q| \neq 1$ (Proposition~\ref{prop:recap_gen0}), we have that
$\bigcup_{n \in \Z} q^n U = \C$, hence this process
gives a unique meromorphic continuation $\Ftld$ of $\Fcap$ on $\C$.

Now, the functions $\Fcap$ and $\Gcap$
satisfy the linear relation (\ref{eq:func_curve}) over $\C(s)$.
This relation provides
a unique meromorphic continuation $\Gtld$ of $\Gcap$ to $\C$
such that $\Ftld$ and $\Gtld$ satisfy
\begin{align} \label{eq:lin_rel_ftld_gtld}
  \gamtld_1(s) \Ftld(s) + \gamtld_2(s) \Gtld(s) + \omega &= 0.
\end{align}

Finally, from (\ref{eq:f_qdiff}) and (\ref{eq:lin_rel_ftld_gtld}),
it is easy to see that the function $\Gtld$ satisfies
the following $q$-difference equation:
\begin{equation} \label{eq:g_qdiff}
  \Gtld(qs) = \frac{\gamtld^{\iota_1}}{\gamtld}(s) \Gtld(s) + \left(\frac{\omega}{\gamtld_1(s)}
    - \frac{\omega}{\gamtld_1^{\iota_1}(s)}\right) \gamtld^{\iota_1}(s).
\end{equation}

\subsection{The D-algebraicity of \texorpdfstring{$\boldsymbol{Q(x,y)}$, $\boldsymbol{\Ftld(s)}$ and $\boldsymbol{\Gtld(s)}$}{$Q(x,y)$, $F(s)$, $G(s)$}}

The algebraic-differential properties of the
formal power series $Q(x,y)$, $Q(x,0)$ and $Q(0,y)$ and
their meromorphic counterparts $\Ftld(s)$ and $\Gtld(s)$ are related.
The following proposition relates the $x$ and $y$-D-algebraicity
of $Q(x,y)$ over $\C(x,y)$ with the $s$-D-algebraicity of $\Ftld(s)$ and $\Gtld(s)$
over $\C(s)$.
As $t$ is a fixed real number, the study of the $t$-D-algebraicity
of $Q(x,y,t)$ is not easily related to the properties of $\Ftld(s)$ and
$\Gtld(s)$, and
rigid parametrizations
are needed (see \cite{dreyfusLengthDerivativeGenerating2021}), which
are not implemented in this paper.

\begin{proposition} \label{prop:Qx_Fs}
  For $t > 0$ as in Proposition~\ref{prop:holom_eq},
  and a weighting $(d_v)_v$, $a$ and $b$, the following
  statements are equivalent:
  \begin{enumerate}[style=unboxed,align=CenterWithParen,labelwidth=\mylongest]
  \item[a] $Q(x,0)$ is $x$-D-algebraic,
  \item[a'] $Q(0,y)$ is $y$-D-algebraic,
  \item[b] $\Ftld(s)$ is $s$-D-algebraic,
  \item[b'] $\Gtld(s)$ is $s$-D-algebraic,
  \item[c] $Q(x,y)$ is $x$-D-algebraic for all $y$,
  \item[c'] $Q(x,y)$ is $y$-D-algebraic for all $x$.
  \end{enumerate}
\end{proposition}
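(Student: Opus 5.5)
The plan is to prove the cycle of equivalences (a)$\Leftrightarrow$(b)$\Leftrightarrow$(b')$\Leftrightarrow$(a') and then (a),(a')$\Leftrightarrow$(c),(c'), using two standard facts about D-algebraic functions. First, the D-algebraic functions over a differential field form a field, stable under derivation. Second, D-algebraicity is preserved by composition with algebraic functions: if $f(u)$ is $u$-D-algebraic and $u=u(s)$ is algebraic over $\C(s)$, then $f(u(s))$ is $s$-D-algebraic, and conversely when $s$ is algebraic over $\C(u)$. Both facts follow from transcendence degree counting: $f$ is D-algebraic iff the field generated by all its derivatives has finite transcendence degree over the base.

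\emph{Step (a)$\Leftrightarrow$(b).} Locally on $V$ we have $\Ftld(s)=x(s)Q(x(s),0)$. If $Q(x,0)$ is $x$-D-algebraic, then since $x(s)\in\C(s)$, the composition $Q(x(s),0)$ is $s$-D-algebraic, and so is its product with $x(s)$. D-algebraicity is a local identity between meromorphic functions, so it extends by analytic continuation to $\Ftld$ on $\C$. Conversely, $s$ is algebraic of degree $2$ over $\C(x(s))$ by Proposition~\ref{prop:recap_gal}. Choosing locally a branch $s(x)$ near $x=0$ with $s(0)=0$, we get that $Q(x,0)=\Ftld(s(x))/x$ is $x$-D-algebraic. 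The same argument gives (a')$\Leftrightarrow$(b').

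\emph{Step (b)$\Leftrightarrow$(b').} Relation~(\ref{eq:lin_rel_ftld_gtld}) gives
\[
\Gtld=-(\omega+\gamtld_1\Ftld)/\gamtld_2 ,
\]
with $\gamtld_i\in\C(s)$ nonzero by Proposition~\ref{prop:div_gamma}. Closure of D-algebraic functions under field operations then gives the equivalence in both directions.

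\emph{Step to (c),(c').} Equation~(\ref{eq:eq_func_Qxy_gen0}) expresses $Q(x,y)$ as a $\C(x,y)$-linear combination of $1$, $Q(x,0)$ and $Q(0,y)$, where $K\neq0$. Fix $y$ (formally, work over $\C(y)$ or at a fixed value where the series converge). Then $Q(0,y)$ is a constant for $\partial_x$. So if (a) holds, $Q(x,y)$ is $x$-D-algebraic, which gives (a)$\Rightarrow$(c). Conversely, specialising $y=0$ in an $x$-differential polynomial equation satisfied by $Q(x,y)$ yields one for $Q(x,0)$. One must ensure the specialisation does not make the equation trivial. I would handle this by first dividing the equation by the largest power of $y$ dividing all its coefficients, which is the standard trick, or alternatively by isolating $Q(x,0)$ through the kernel equation at a generic $y$ together with (a)$\Leftrightarrow$(a'). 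The case (c') is symmetric.

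The main obstacle is making the analytic setting rigorous. Items (a),(a') concern formal series in $x$ with $t$ fixed transcendental. One must ensure that the local branch inversion $s(x)$ and the meromorphic continuation of $\Ftld$ preserve D-algebraicity, and that specialising $y\to0$ in (c)$\Rightarrow$(a) is legitimate. I would treat everything as convergent germs near $0$, using Proposition~\ref{prop:holom_eq}, and invoke the transcendence-degree characterisation. That characterisation is insensitive to algebraic changes of variable and to restriction to open sets.
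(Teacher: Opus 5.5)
Your proposal is correct and follows the same route as the paper. It proves (a)$\Leftrightarrow$(b) and (a')$\Leftrightarrow$(b') by composing with $x(s)$, $y(s)$ and their local inverses near $s=0$; these inverses exist because $0$ is a simple zero of $x(s)$ and $y(s)$ by Proposition~\ref{prop:recap_gen0}. Where you reprove this step by transcendence-degree counting, the paper cites Lemmas~6.3 and~6.4 of~\cite{DHRS}. Both then get (b)$\Leftrightarrow$(b') from~(\ref{eq:lin_rel_ftld_gtld}), and (a)$\Leftrightarrow$(c) from~(\ref{eq:eq_func_Qxy_gen0}) with $Q(0,y)$ a $\partial_x$-constant. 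Your specialisation at $y=0$ for (c)$\Rightarrow$(a), after dividing out the maximal power of $y$, is valid; it makes explicit what the paper obtains from closure properties over $\C(x,y)$.
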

\begin{proof}
  We recall that on the open set $V$, we have
  \begin{align*}
    \Ftld(s) &= x(s) Q(x(s),0), & \Gtld(s) &= y(s) Q(0,y(s)).
  \end{align*}
  Up to a restriction of $V$, the maps $x(s)$ and $y(s)$ are
  biholomorphisms on $V$.
  The equivalence between $(a)$ and $(b)$ (resp. $(a')$ and $(b')$)
  now follows from Lemmas~6.3 and~6.4 of~\cite{DHRS}.

  Moreover, the functions $\Ftld$ and $\Gtld$ are linearly
  related over $\C(s)$ by Equation~(\ref{eq:lin_rel_ftld_gtld}), which shows the equivalence
  between $(b)$ and $(b')$,
  and thus the equivalence between $(a)$, $(a')$, $(b)$ and $(b')$.

  Finally, $(c)$ is equivalent to $(a)$ from Equation~(\ref{eq:eq_func_Qxy_gen0}).
  Indeed, $\partial_x Q(0,y) = 0$, so $Q(0,y)$ is $x$-D-algebraic. Hence,
  since $x^2 y \gamma_1(x,y)$ and $K(x,y)$ are both nonzero elements
  of $\C(x,y)$, the closure
  properties of the $x$-D-algebraic class imply that $Q(x,y)$
  is $x$-D-algebraic if and only if $Q(x,0)$ is.
  Similarly, $(c')$ is equivalent to $(a')$.
\end{proof}

Therefore, determining the algebric-differential nature of the function $Q(x,y)$
is equivalent to determining the differential nature of either $\Ftld(s)$
or $\Gtld(s)$, which satisfy functional equations with more structure:
$q$-difference equations. Equations (\ref{eq:f_qdiff})
and~(\ref{eq:g_qdiff}) do not completely characterize $\Ftld(s)$
and $\Gtld(s)$, so we will use in a crucial way the fact that
they continue the functions $x(s) Q(x(s),0)$ and $y(s) Q(0,y(s))$,
on which we have some grasp through their power series expansion, that
give information on the poles near $0$ and $\infty$ of $\Ftld$ and $\Gtld$.

Finally, we will often make use of the following proposition,
which allows us to go from meromorphic functions on $\C$ to
power series, mostly to obtain equations on
$Q(x,0)$ (resp. $Q(0,y)$) from equations on $\Ftld(s)$ (resp. $\Gtld(s)$).

\begin{proposition} \label{prop:lift_an_pow}
  Assume that a Laurent series $H(x) \in \C\llpar x))$ induces
  a meromorphic function at $x=0$. If $H(x(s)) = 0$
  or if $H(y(s)) = 0$ for $s$ near $0$, then $H(x) = 0$.
\end{proposition}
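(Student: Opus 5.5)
The plan is to combine the identity theorem with the facts that $x(s)$ is a nonconstant rational function vanishing at $s=0$, so that $x(s)$ maps every small punctured neighbourhood of $0$ onto a set containing a punctured neighbourhood of $0$ in the $x$-plane. We treat the case $H(x(s))=0$; the case $H(y(s))=0$ is identical with $y$ in place of $x$, using $(y)=0+\infty-Q_3-Q_4$ from Proposition~\ref{prop:recap_gen0}.

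First, since $H\in\C\llpar x\rrpar$ induces a meromorphic function at $x=0$, there is $\rho>0$ such that $x^N H(x)$ converges on the disc $|x|<\rho$ for some integer $N\ge 0$. Hence $H$ defines a holomorphic function on the punctured disc $D^\ast=\{0<|x|<\rho\}$, and the formal series $H$ is the Laurent expansion of this function. By Proposition~\ref{prop:recap_gen0}(3), the divisor of $x(s)$ contains $0$ with multiplicity exactly one, so $x(s)$ has a simple zero at $s=0$. In particular $x'(0)\neq 0$, and by the local inversion theorem $x(s)$ restricts to a biholomorphism from a small disc $W\ni 0$ onto an open neighbourhood $x(W)$ of $0$ in $\C$. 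Shrinking $W$, we may assume $x(W)\subset\{|x|<\rho\}$, and that the hypothesis $H(x(s))=0$ holds for all $s\in W\setminus\{0\}$.

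Next, for every $u\in x(W)\setminus\{0\}$ write $u=x(s)$ with $s\in W\setminus\{0\}$. Then $H(u)=H(x(s))=0$. Thus the holomorphic function $H$ vanishes on the nonempty open set $x(W)\setminus\{0\}\subset D^\ast$. Since $D^\ast$ is connected, the identity theorem gives $H\equiv 0$ on $D^\ast$. Therefore $x^N H(x)$ is a convergent power series vanishing on a punctured disc, so all of its coefficients are zero, and $H=0$ as a formal Laurent series.

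The only step needing care is to ensure that the composition $H(x(s))$ really is the composed analytic function near $s=0$ and not merely a formal object. This is exactly the content of the hypothesis together with $x(0)=0$, which was already used in Proposition~\ref{prop:holom_eq}. If one preferred not to rely on the simple zero, the argument still works: any nonconstant holomorphic map is open, so $x(W)$ contains a neighbourhood of $0$ regardless of the order of vanishing.
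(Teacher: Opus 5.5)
Your proof is correct and follows essentially the same route as the paper: show that $x(s)$ maps a neighbourhood of $s=0$ onto a neighbourhood of $x=0$, then conclude $H=0$ by the identity theorem. The paper gets this directly from the open mapping theorem rather than from the simple zero of $x(s)$ at $0$ plus local inversion, but as you note yourself, either justification works.
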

\begin{proof}
  Let $W$ be a neighborhood of $0$ such that $H(x(s)) = 0$ for all $s$ in $W \setminus \{0\}$.
  The function $x: \C \rightarrow \C$ is non-constant and analytic at $0$ with
  $x(0) = 0$. Hence by the open mapping theorem for holomorphic functions,
  the image of $W$ under $x$ is an open neighborhood of $0$. Thus, the
  analytic function $H(x)$ is locally zero at $0$, hence zero by
  analytic continuation. The argument is similar for $H(y(s)) = 0$.
\end{proof}

\section{Classification strategy}
\label{sec:class-strat}

In the previous section, we have constructed two
$q$-difference equations~(\ref{eq:f_qdiff})
and~(\ref{eq:g_qdiff}) satisfied by two meromorphic functions
on $\C$, whose differential properties reflect those of
the generating functions of quadrant walks $Q(x,y)$.
There are many results on the differential transcendence
of power series solution to a $q$-difference equation.
One of the first of those results was proved
by Ishizaki for the solutions of equations
of the form $y(qs) = a(s) y(s) + b(s)$~\cite{ishizaki98}.
We apply it to equations (\ref{eq:f_qdiff}) and~(\ref{eq:g_qdiff}):

\begin{proposition} \label{thm:ishizaki}
  The following statements are equivalent:
  \begin{enumerate}
  \item $\Ftld$ and $\Gtld$ are D-algebraic over $\C(s)$.
  \item $\Ftld$ and $\Gtld$ are in $\C(s)$.
  \end{enumerate}
\end{proposition}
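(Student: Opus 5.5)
The implication (2)$\Rightarrow$(1) is immediate, because every element of $\C(s)$ is D-algebraic over $\C(s)$. For (1)$\Rightarrow$(2) we use Ishizaki's theorem~\cite{ishizaki98}. It concerns meromorphic solutions on $\C$ of first-order inhomogeneous linear $q$-difference equations $y(qs)=\alpha(s)y(s)+\beta(s)$ with $\alpha,\beta\in\C(s)$, $\alpha\neq 0$ and $|q|\neq 1$. The statement we need is: if such a solution is D-algebraic over $\C(s)$, then it lies in $\C(s)$. (Ishizaki's formulation may exclude the case where $\alpha$ is a monomial $cs^k$; we return to this below.) It therefore suffices to put the equations of Section~\ref{sec:qdiff-equ} in this form and check the hypotheses. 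By Proposition~\ref{prop:Qx_Fs}, or directly from the linear relation~(\ref{eq:lin_rel_ftld_gtld}), $\Ftld$ is D-algebraic if and only if $\Gtld$ is, and $\Ftld\in\C(s)$ if and only if $\Gtld\in\C(s)$. Indeed, $\gamtld_2\neq 0$ by Proposition~\ref{prop:div_gamma}, so $\Gtld=-(\omega+\gamtld_1\Ftld)/\gamtld_2$. Hence only one of the two functions needs to be treated, and we work with $\Gtld$ and equation~(\ref{eq:g_qdiff}).

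\emph{Step 1: checking the hypotheses.} Equation~(\ref{eq:g_qdiff}) has the form $\Gtld(qs)=\alpha(s)\Gtld(s)+\beta(s)$ with
\[
\alpha=\frac{\gamtld^{\iota_1}}{\gamtld},\qquad \beta=\Bigl(\frac{\omega}{\gamtld_1}-\frac{\omega}{\gamtld_1^{\iota_1}}\Bigr)\gamtld^{\iota_1}.
\]
Both are rational functions of $s$. Moreover $\alpha$ is nonzero, because $\gamtld_1$ and $\gamtld_2$ are nonzero by Proposition~\ref{prop:div_gamma}. We showed in Section~\ref{sec:qdiff-equ} that $\Gtld$ is meromorphic on $\C$, and $|q|\neq1$ by Proposition~\ref{prop:recap_gen0}. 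Since $\Gtld$ is meromorphic on all of $\C$, it is a solution in the sense required by Ishizaki. One should still confirm that it has at most a pole at $0$, so that it is covered by Ishizaki's setting, possibly after multiplying by a power of $s$. This holds because near $0$ it continues the analytic function $y(s)Q(0,y(s))$.

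\emph{Step 2: applying the theorem.} Assuming $\Gtld$ is D-algebraic, Ishizaki's theorem gives $\Gtld\in\C(s)$. Then $\Ftld\in\C(s)$ by the linear relation above.

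\emph{Main obstacle.} The delicate point is the exceptional case of Ishizaki's theorem, where $\alpha=cs^k$. In that case some solutions can be theta-like, and the theorem only yields a weaker conclusion. To exclude it we use the divisors from Proposition~\ref{prop:div_gamma}. We have
\[
(\alpha)=(\gamtld_1^{\iota_1})-(\gamtld_2^{\iota_1})-(\gamtld_1)+(\gamtld_2).
\]
Here $\iota_1$ exchanges $0$ and $\infty$, so the contributions at $0$ and $\infty$ cancel. Hence $(\alpha)=\iota_1(P_1)+\iota_1(P_2)-\iota_1(P_3)-\iota_1(P_4)-P_1-P_2+P_3+P_4$, a divisor supported away from $0$ and $\infty$. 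If $\alpha$ were $cs^k$, this divisor would vanish, so $\alpha$ would be constant, i.e. $\gamtld^{\iota_1}=c\,\gamtld$.

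If this can happen for special weights, there are two ways to proceed. The first is to invoke the full form of Ishizaki's result, which in the constant-coefficient case $\Gtld(qs)=c\Gtld(s)+\beta(s)$ still forces rational solutions under D-algebraicity. The second is to handle the case by hand: compare the Laurent expansions of $\Gtld$ at $0$ and at $\infty$, and use the fact that it is analytic at both points. This excludes the non-rational solutions $s^{\log_q c}$ times a $q$-elliptic function. I would first try the first route, citing the precise statement of~\cite{ishizaki98}, which covers all $\alpha\in\C(s)^\ast$ for equations over $\C(s)$.
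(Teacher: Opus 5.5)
Your proposal is correct and is essentially the paper's proof. The paper simply invokes Theorem~1.2 of~\cite{ishizaki98} for both equations~(\ref{eq:f_qdiff}) and~(\ref{eq:g_qdiff}), using that $q$ is not a root of unity, the coefficients lie in $\C(s)$, and the solutions are meromorphic at $s=0$; that meromorphy at $0$ already excludes theta-type solutions, so no monomial exception needs separate treatment. You instead apply the theorem once and transfer rationality through the linear relation~(\ref{eq:lin_rel_ftld_gtld}), which is a harmless variation.

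Drop the fallback ``by hand'' route, though. $\Gtld$ is not known to be analytic at $\infty$; if it were, it would already be rational, being meromorphic on $\C$. Meromorphy at $0$ only removes the homogeneous part of the solution, not a non-rational particular solution, and ruling that out is exactly what Ishizaki's theorem does.
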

\begin{proof}
  The real number $q$ is not a root of unit,
  the coefficients of (\ref{eq:f_qdiff})
  and (\ref{eq:g_qdiff})
  belong to $\C(s)$, and the functions $\Ftld$
  and $\Gtld$ are meromorphic at $s=0$. Hence we may apply
  Theorem~1.2 of~\cite{ishizaki98} to the two equations,
  which shows the claim.
\end{proof}

Thus, to investigate the D-algebraicity of $\Ftld$ (or equivalently $\Gtld$), the
strategy that we are going to explain in this section will consist
in determining for which weighted models these functions can
be rational. In the earlier paper applying this
strategy \cite{DreyfusHardouinRoquesSingerGenuszero}
(corresponding to the case without the interaction
statistics $a=b=1$), the authors prove that there exists no rational solution.
Whether it exists is highly dependent
on the coefficients of the $q$-difference equations considered.
For the classification of the models with the five supports of
Figure~\ref{fig:gen0_supports}, we find a general strategy
which allows us to handle almost all the cases uniformly,
its culmination being the classification in Theorem~\ref{thm:thm_clas}.

Because of the symmetries of the coefficients
of equations~(\ref{eq:f_qdiff}) and~(\ref{eq:g_qdiff}),
we will reduce the study of rational solutions
to these equations
to what we call \emph{decoupling equations}.
Recall that the kernel
curve $\Etproj$ admits a rational parametrization
$(x,y) : \P^1 \longrightarrow \Etproj$.
Consider
some fraction $h(x,y) \in \C(x,y)$.
The problem is to find two fractions $f(x)$ and $g(y)$
so that the following equation holds for
all points $s$ in $\P^1$,
\[
  h(x(s),y(s)) = f(x(s)) + g(y(s)).
\]
This is called an \emph{additive decoupling of $h$},
this notion being introduced in~\cite{BBMR16}.
Likewise, one can wonder if there exist
$f(x)$ and $g(y)$
so that for all points $s$ in $\P^1$,
\[
  h(x(s),y(s)) = f(x(s)) g(y(s)).
\]
This is called a \emph{multiplicative decoupling of $h$}
(introduced in~\cite{BMEFHR}).

The existence of such decouplings for a
fraction $h(x,y)$ plays an important role in the classification
of generating functions enumerating walks.
For instance, in the case of  quadrant walks with small steps without
interacting boundaries,
algebraicity is characterized by the finiteness of the group,
and the fact that the fraction $x y$
admits an additive decoupling (see
\cite{dreyfusEnumerationWeightedQuadrant2024}). In our case, the
equations that appear are generalizations of the decoupling
equations above, mixing the additive and multiplicative form, and they serve
the same purpose: we will see that the fact that they admit a solution
or not determines the position of $Q(x,y)$ in the differential
hierarchy. This explains our choice of terminology, as we introduce
them now.

\begin{lemma} \label{lem:rat_sol_dcpl}
  Assume that the functions
  $\Ftld(s)$ and $\Gtld(s)$ (defined in Section~\ref{sec:qdiff-equ}) are rational. In this
  case, define the following elements of $\C(s)$:
  \begin{align*}
    \ftld(s) \eqdef  \tfrac{1}{2} \left(\Ftld(s) + \Ftld^{\iota_1}(s)\right),
    & &
        \gtld(s) \eqdef  \tfrac{1}{2} \left(\Gtld(s)+\Gtld^{\iota_2}(s)\right), \\
    \ftld_h(s) \eqdef  \tfrac{1}{2} \left(\Ftld(s) - \Ftld^{\iota_1}(s)\right),
    & &
        \gtld_h(s) \eqdef  \tfrac{1}{2} \left(\Gtld(s) - \Gtld^{\iota_2}(s) \right).
  \end{align*}

  \begin{enumerate}
  \item The pair $(h_1(s),h_2(s))=(\ftld(s),\gtld(s))$ satisfies the inhomogeneous equation
    \begin{equation} \label{eq:eq_lin_inhom} \tag{$E_{\gamtld_1,\gamtld_2,\omega}$}
      \gamtld_1(s) h_1(s) + \gamtld_2(s) h_2(s) + \omega = 0 \text{ with $h_1^{\iota_1} = h_1$ and $h_2^{\iota_2} = h_2$}.
    \end{equation}
  \item The pair $(h_1(s),h_2(s)) = (\ftld_h(s),\gtld_h(s))$ satisfies the homogeneous equation
    \begin{equation} \label{eq:eq_lin_hom}\tag{$E'_{\gamtld_1,\gamtld_2}$}
      \gamtld_1(s) h_1(s) + \gamtld_2(s) h_2(s) = 0 \text{ with $h_1^{\iota_1} = - h_1$ and $h_2^{\iota_2} = - h_2$}.
    \end{equation}
  \end{enumerate}
  We refer to equations~(\ref{eq:eq_lin_inhom}) and (\ref{eq:eq_lin_hom}) as
  the \emph{decoupling equations}.
\end{lemma}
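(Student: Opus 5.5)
The plan is to start from the linear relation (\ref{eq:lin_rel_ftld_gtld}), namely $\gamtld_1 \Ftld + \gamtld_2 \Gtld + \omega = 0$. Under the rationality assumption this is an identity in $\C(s)$, so we may compose it with the involutions $\iota_1$ and $\iota_2$. The point is to get a second identity in which $\Ftld$ is replaced by $\Ftld^{\iota_1}$ and $\Gtld$ by $\Gtld^{\iota_2}$, with the same coefficients $\gamtld_1,\gamtld_2,\omega$. Half-sums and half-differences of the two identities then give the two decoupling equations directly.

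The key step is to produce this second identity. I would use the $q$-difference equation (\ref{eq:f_qdiff}), which came from eliminating $y(s)Q(0,y(s))$ between the relation at $s$ and at $\iota_2(s)$. Equivalently, I would use the fact that $\Gtld^{\iota_2} = \Gtld$ should hold up to the inhomogeneous correction. Compose (\ref{eq:lin_rel_ftld_gtld}) with $\iota_1$:
\[
\gamtld_1^{\iota_1}\Ftld^{\iota_1} + \gamtld_2^{\iota_1}\Gtld^{\iota_1} + \omega = 0.
\]
Since $\gamtld_1 = \gamma_1(x,y)$ and $\gamtld_2 = \gamma_2(x,y)$ are rational functions of $x(s),y(s)$, we only know that $x^{\iota_1} = x$; the $y$-coordinate changes. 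So a direct composition does not keep the coefficients fixed. I would therefore first show that a rational $\Ftld$ satisfies $\Ftld^{\iota_1} = \Ftld$, and similarly $\Gtld^{\iota_2} = \Gtld$. This holds near $0$ because $\Ftld(s) = x(s)Q(x(s),0)$ depends only on $x(s)$, and $x^{\iota_1} = x$. More carefully, $\Ftld$ coincides on $V$ with a function of $x(s)$, and $\iota_1$ sends a neighbourhood of $0$ to a neighbourhood of $\infty$, where $\Ftld$ is also given by $x(s)Q(x(s),0)$ (by the analyticity on $U_\infty$ and uniqueness of continuation). By the identity principle, $\Ftld^{\iota_1} = \Ftld$ as rational functions. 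The same argument with $y^{\iota_2} = y$ gives $\Gtld^{\iota_2} = \Gtld$.

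With that in hand, $\ftld = \Ftld$, $\gtld = \Gtld$ and $\ftld_h = \gtld_h = 0$. Statement (1) is then exactly (\ref{eq:lin_rel_ftld_gtld}) together with the invariances. Statement (2) holds trivially with the zero pair, and the antisymmetry conditions are immediate from the definitions in any case. If the identification at $\infty$ fails, I would fall back on the general route. Obtain the second identity $\gamtld_1\Ftld^{\iota_1} + \gamtld_2\Gtld^{\iota_2} + \omega = 0$ by combining (\ref{eq:f_qdiff}) with its $\iota_1$-conjugate. Then add and subtract it from (\ref{eq:lin_rel_ftld_gtld}): the half-sum gives the inhomogeneous equation with $\omega$, and the half-difference gives the homogeneous one. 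The symmetry conditions $h_1^{\iota_1} = \pm h_1$ and $h_2^{\iota_2} = \pm h_2$ follow from the involutivity of $\iota_1$ and $\iota_2$.

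The main obstacle is this second identity, that is, the correct handling of how $\iota_1$ and $\iota_2$ act on the coefficient functions $\gamtld_1$ and $\gamtld_2$. I would first check carefully whether the analytic identification near $\infty$ holds, since that shortcut makes everything trivial.
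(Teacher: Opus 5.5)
Your main route rests on a false claim. A rational $\Ftld$ need not satisfy $\Ftld^{\iota_1}=\Ftld$. If it always did, $\ftld_h$ would always vanish and part~(2) of the lemma would be empty. The paper's algebraic case is exactly a counterexample. Take $\calS_3$ with $a=b=2$, so $A=B=\tfrac12$ and $\omega=0$. The proof of Lemma~\ref{lem:lift_sol}, together with Lemma~\ref{lem:clas_hom}, gives $\Ftld=\lambda/\gamtld_1$ with $\lambda\neq 0$. Since $\gamtld_1^{\iota_1}=-\gamtld_1$ here, you get $\Ftld^{\iota_1}=-\Ftld$. Equivalently, an $\iota_1$-invariant rational $\Ftld$ would lie in $\C(x(s))$ by Proposition~\ref{prop:recap_gal}. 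Proposition~\ref{prop:lift_an_pow} would then make $xQ(x,0)$ rational in $x$, which the square root in Theorem~\ref{thm:thm_clas} rules out.

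The faulty step is identifying $\Ftld$ near $\infty$ with $x(s)Q(x(s),0)$. That function is only known to be analytic on $U_0\cup U_\infty$, which is disconnected. So uniqueness of continuation transfers nothing from $V$ to a neighbourhood of $\infty$. The continuation of $\Fcap$ built through \eqref{eq:f_qdiff} can land on another branch; in the example above it equals $-x(s)Q(x(s),0)$ near $\infty$. The information near $\infty$ enters the paper only through \eqref{eq:interm_2}, and is already encoded in \eqref{eq:f_qdiff}.

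Your fallback has the paper's architecture, but it leaves the only nontrivial step undone. The combination you name is also not the one that works. The $\iota_1$-conjugate of \eqref{eq:f_qdiff} is just a $q$-difference equation for $\Ftld^{\iota_1}$ and never brings in $\Gtld^{\iota_2}$. What is needed is to conjugate by $\iota_2$ and use the linear relation. Multiply \eqref{eq:f_qdiff} by $\gamtld^{\iota_2}$ and use $\Ftld(s/q)=\Ftld^{\iota_1\iota_2}(s)$ to rewrite it as
\[
\Bigl(\gamtld\,\Ftld^{\iota_1}+\frac{\omega}{\gamtld_2}\Bigr)^{\iota_2}=\gamtld\,\Ftld+\frac{\omega}{\gamtld_2}=-\Gtld .
\]
The last equality is \eqref{eq:lin_rel_ftld_gtld} divided by $\gamtld_2$. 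Applying $\iota_2$ gives $\gamtld\,\Ftld^{\iota_1}+\omega/\gamtld_2=-\Gtld^{\iota_2}$, that is, $\gamtld_1\Ftld^{\iota_1}+\gamtld_2\Gtld^{\iota_2}+\omega=0$. Only then do your half-sum and half-difference with \eqref{eq:lin_rel_ftld_gtld} yield both decoupling equations. The parity conditions follow from $\iota_1^2=\iota_2^2=\mathrm{id}$.
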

\begin{proof}
  Assuming that $\Ftld$ and $\Gtld$ belong to $\C(s)$,
  we first find a relation between
  $\Ftld^{\iota_1}$ and $\Gtld^{\iota_2}$
  (composition with $\iota_1$ and $\iota_2$ is always
  well defined for rational functions).
  Recall that $\Ftld(\tfrac{s}{q}) = \Ftld^{\iota_1 \iota_2}$, hence
  we may rewrite the $q$-difference equation (\ref{eq:f_qdiff}) into
  \begin{equation*}
    \left( \gamtld(s) \Ftld^{\iota_1}(s) + \frac{\omega}{\gamtld_2(s)} \right)^{\iota_2}
    = \gamtld(s) \Ftld(s) + \frac{\omega}{\gamtld_2(s)},
  \end{equation*}
  hence by applying $\iota_2$ on both sides we obtain
  \begin{equation} \label{eq:lem_rat_sol_dcpl_eq1}
    \gamtld(s) \Ftld^{\iota_1}(s) + \frac{\omega}{\gamtld_2(s)}
    = \left( \gamtld(s) \Ftld(s) + \frac{\omega}{\gamtld_2(s)} \right)^{\iota_2}.
  \end{equation}
  Moreover, the linear relation~(\ref{eq:lin_rel_ftld_gtld}) between $\Ftld$ and $\Gtld$
  can be rewritten as
  \begin{equation*}
    - \Gtld(s) =  \gamtld(s) \Ftld(s) + \frac{\omega}{\gamtld_2(s)},
  \end{equation*}
  so by applying $\iota_2$ we obtain
  \begin{equation} \label{eq:lem_rat_sol_dcpl_eq2}
    - \Gtld^{\iota_2}(s) = \left(\gamtld(s) \Ftld(s) + \frac{\omega}{\gamtld_2(s)}\right)^{\iota_2}.
  \end{equation}
  Eliminating the right-hand sides between (\ref{eq:lem_rat_sol_dcpl_eq1})
  and~(\ref{eq:lem_rat_sol_dcpl_eq2}),
  we extract the following relation between $\Ftld^{\iota_1}$ and $\Gtld^{\iota_2}$:
  \begin{align}
    \gamtld_1(s) \Ftld^{\iota_1}(s) + \gamtld_2(s) \Gtld^{\iota_2}(s) + \omega &= 0 \label{eq:eq_rwr_fi1}.
  \end{align}

  We copy for convenience Equation~(\ref{eq:lin_rel_ftld_gtld}):
  \begin{align}
    \gamtld_1(s) \Ftld(s) + \gamtld_2(s) \Gtld(s) + \omega &= 0\label{eq:eq_rwr_f}.
  \end{align}

  Taking the average of equations~(\ref{eq:eq_rwr_fi1}) and~(\ref{eq:eq_rwr_f}),
  one obtains (\ref{eq:eq_lin_inhom}).
  Taking half the difference of equations~(\ref{eq:eq_rwr_fi1}) and
  ~(\ref{eq:eq_rwr_f}),
  one obtains (\ref{eq:eq_lin_hom}).
  Indeed, $\iota_1$ is an involution, hence $(\Ftld^{\iota_1})^{\iota_1} = \Ftld$,
  so $\ftld^{\iota_1} = \ftld$ and $\ftld_h^{\iota_1} = - \ftld_h$. The same argument
  applies to~$\gtld$ and~$\gtld_h$.
\end{proof}

\begin{remark}
  From Proposition~\ref{prop:recap_gal},
  the condition $\ftld^{\iota_1} = \ftld$ is equivalent to the condition that
  there exists $f(x) \in \C(x)$ such that
  $\ftld(s) = f(x(s))$. Likewise, the condition $\ftld^{\iota_1} = - \ftld$
  asserts that there exists $\ftld(s)^2 = f(x(s))$ for some $f$,
  but that $\ftld(s)$ itself is not a function of $x(s)$. This explains
  the qualification of \emph{decoupling equations}: they relate functions
  in two different variables.
  \exqed
\end{remark}

We will see in the remaining of the section how the study
of the rational solutions of the
decoupling equations gives information on the series $Q(x,y)$,
either for showing its non-D-algebraicity in $x$ and $y$, or
for obtaining an explicit algebraic expression.

\subsection{Showing non D-algebraicity}

Our argument for showing non-D-algebraicity will rely on the fact that
we know that the solution $Q(x,y)$ is a
generating function of walks in the quadrant, and thus
we may control the expansion of $Q(x,0)$ and $Q(0,y)$ around $0$.

\begin{lemma} \label{lem:lift_laurent}
  Let $h$ be a fraction of $\C(s)$,
  and assume that the poles of $h$ belong to $\{0,\infty\}$.
  If $h^{\iota_1} = h$, then there exists a Laurent polynomial
  $H(x) \in \C[1/x]$ such that $H(x(s)) = h(s)$.
  Analogously, if $h^{\iota_2} = h$, then there
  exists a Laurent polynomial $H(y) \in \C[1/y]$ such that $H(y(s)) = h(s)$.
\end{lemma}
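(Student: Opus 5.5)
The plan is to descend $h$ to the subfield $\C(x(s))$, read off where the resulting fraction can have poles from the divisor of $x$ in Proposition~\ref{prop:recap_gen0}, and conclude that it is a polynomial in $1/x$. We treat the case $h^{\iota_1} = h$; the case $h^{\iota_2}=h$ is identical, with $y$, $\iota_2$ and the divisor $(y) = 0+\infty-Q_3-Q_4$ in place of $x$, $\iota_1$ and $(x)$.

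First, by Proposition~\ref{prop:recap_gal}(1), the invariance $h^{\iota_1}=h$ gives a fraction $H \in \C(X)$ with $h(s) = H(x(s))$. It remains to show that $H$ lies in $\C[1/X]$, that is, that $H$ has no pole at any $c \in \C\setminus\{0\}$ and no pole at $X=\infty$.

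For a point $c \in \P^1$, the fibre $x^{-1}(c)$ is nonempty because $x$ is a nonconstant map $\P^1\to\P^1$. If $H$ had a pole at $c$, then $h = H\circ x$ would have a pole at every point of $x^{-1}(c)$. By Proposition~\ref{prop:recap_gen0}(3), $(x) = 0+\infty-Q_1-Q_2$ with $Q_i\neq 0,\infty$. Hence $x^{-1}(0) = \{0,\infty\}$ and $x^{-1}(\infty) = \{Q_1,Q_2\}$. For $c \notin \{0,\infty\}$, the fibre $x^{-1}(c)$ avoids $0$ and $\infty$, since $x(0)=x(\infty)=0$. So a pole of $H$ at such a $c$, or at $c=\infty$, would produce a pole of $h$ outside $\{0,\infty\}$, contradicting the hypothesis. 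Therefore every pole of $H$ lies at $X=0$.

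A rational fraction in $X$ whose only pole in $\P^1$ is at $0$ is a polynomial in $1/X$: writing $H = P(X)/X^k$ with $P$ a polynomial, the absence of a pole at $\infty$ forces $\deg P \le k$, and no other finite poles can occur. Hence $H \in \C[1/X]$ and $H(x(s)) = h(s)$, as claimed.

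The only point requiring care is the pole-pullback step, namely that a pole of $H$ at $c$ yields a pole of $H\circ x$ at each preimage. This follows from $\ord_P(H\circ x) = e_P\cdot \ord_c(H)$, where $e_P \geq 1$ is the ramification index of $x$ at $P$. We expect no real obstacle here.
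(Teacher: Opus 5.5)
Your proof is correct, and it takes a more direct route than the paper's. Both proofs start by descending $h$ to some $H=U/V\in\C(X)$.

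The paper then computes the polar divisor of $h$ in two ways. First, from the hypothesis, using $h^{\iota_1}=h$ to get $(h)_\infty=p\cdot(x)_0$. Second, from the coprime representation, using a B\'ezout relation $U'U+V'V=1$ to show that the zeros of $U(x(s))$ and $V(x(s))$ cannot cancel, which gives $(h)_\infty=(v)_0-\min(\deg V-\deg U,0)\cdot(x)_\infty$. Comparing the two yields $\deg U\le \deg V$ and $(v)_0=p\cdot(x)_0$. A degree-zero argument on the divisor of $v/x^p$ then forces $V=X^p$.

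You argue fibrewise instead. Since $x$ is surjective onto $\P^1$ and $x(0)=x(\infty)=0$, every fibre over $c\neq 0$ avoids $\{0,\infty\}$; this includes $c=\infty$, whose fibre is $\{Q_1,Q_2\}$. So a pole of $H$ at such a $c$ would pull back to a forbidden pole of $h$. Your pole-pullback step, via $\ord_P(H\circ x)=e_P\,\ord_c(H)$ or elementarily via coprimality of $U$ and $V$, does the job of the paper's B\'ezout argument. Surjectivity of $x$ replaces the degree computation on $v/x^p$.

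Your version is shorter and slightly more general. It never uses the $\iota_1$-symmetry of the pole orders at $0$ and $\infty$ (the step $p=q$). Nor does it use the shape of $(x)$ beyond $x(0)=x(\infty)=0$ and $Q_1,Q_2\neq 0,\infty$. It relies only on a standard fact about composing rational maps. The paper's version stays entirely within the divisor calculus of Definition/Proposition~\ref{prop:recap_divisors}. As a by-product it pins down the exact polar divisor of $h$, namely that the pole order $p$ at $0$ equals the degree of $H$ in $1/x$.
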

\begin{proof}
  Let $h \in \C(s)$ be a function whose poles belong to $\{0,\infty\}$,
  such that $h^{\iota_1} = h$.
  The extension $\C(s) / \C(x(s))$ being Galois
  of Galois group generated by $\iota_1$ by Proposition~\ref{prop:recap_gen0},
  there exists a fraction $H(x) \in \C(x)$ such that
  $H(x(s)) = h(s)$.
  Write $H(x) = \frac{U(x)}{V(x)}$
  with $U$ and $V$ relatively prime polynomials,
  $V$ monic, and let
  $u(s) \eqdef  U(x(s))$,
  $v(s) \eqdef  V(x(s))$.
  We write the polar divisor $(h)_{\infty}$ of $h$ in two different ways.

  First, we have by the assumption that $(h)_{\infty} = p \cdot 0 + q \cdot \infty$
  for some nonnegative integers $p$ and $q$. As $\iota_1(0) = \infty$ and
  $h^{\iota_1} = h$, we conclude
  that $p = q$, so by Proposition~\ref{prop:recap_gen0},
  \begin{equation} \label{eq:lem:lift_laurent_eq1}
    (h)_{\infty} = p \cdot 0 + p \cdot \infty = p \cdot (x(s))_0.
  \end{equation}

  Moreover, as $u$ and $v$ are polynomials in $x(s)$,
  we have that $(u)_{\infty} = \deg_x U(x)  \cdot (x(s))_{\infty}$.
  and $(v)_{\infty} = \deg_x V(x) \cdot (x(s))_{\infty}$.
  We also have that since the polynomials $U(x)$ and $V(x)$ are relatively
  prime, Bézout theorem implies the existence
  of some relation \[
    U'(x) U(x) + V'(x) V(x) = 1
  \]
  for $U'(x), V'(x) \in \C[x]$.
  Thus, by composing the relation with $x(s)$, we see that
  \[
    u'(s) u(s) + v'(s) v(s) = 1.
  \]
  For $s_0$ a pole of $x(s)$, $U(x(s_0))$ and $V(x(s_0))$
  are both nonzero since $U(x)$ and $V(x)$ are polynomials.
  For $s_0$ not a pole of $x(s)$, then $s_0$ is not a pole of $u', v', u, v$,
  and we see that it cannot be a zero of both $u(s)$ and $v(s)$.
  Thus, since
  \begin{align*}
    (h) &= (u)_0 + (v)_{\infty} - (u)_{\infty} - (v)_0 \\
        &= (u)_0 - (v)_0 + (\deg_x V(x) - \deg_x U(x)) \cdot (x(s))_{\infty}
  \end{align*}
  and the zeros of $u$ and $v$ don't compensate, we deduce that
  \begin{equation} \label{eq:lem:lift_laurent_eq2}
    (h)_{\infty} = (v)_0 - \min(\deg_x V(x) - \deg_x U(x), 0) \cdot (x(s))_{\infty}.
  \end{equation}

  Therefore, equating (\ref{eq:lem:lift_laurent_eq1}) and (\ref{eq:lem:lift_laurent_eq2}),
  we obtain the conditions
  \begin{multicols}{2}
    \begin{enumerate}[label=(\arabic*)]
    \item $\deg_x V(x) - \deg_x U(x) \ge 0$
    \item $(v)_0 = p \cdot (x(s))_{0}$.
    \end{enumerate}
  \end{multicols}
  Consider $w(s) \eqdef  \tfrac{v(s)}{x(s)^p}$. Using $(2)$, we compute
  its divisor as
  \begin{align*}
    (w) &= (v)_0 - (v)_{\infty} - p \cdot (x(s))_{0} + p \cdot (x(s))_{\infty} \\
        &= (p - \deg_x V(x)) \cdot (x(s))_{\infty}.
  \end{align*}
  Since $\deg\, (w) = 0$ (Proposition~\ref{prop:recap_divisors}),
  we deduce that $2 \cdot (p - \deg_x V(x)) = 0$,
  and thus $(w) = 0$, which implies that $w$ is a constant
  (Proposition~\ref{prop:recap_divisors}).
  As $V(x)$ is monic, this implies that
  $V(x) = x^p$.
  Moreover condition~$(1)$ implies that
  $\deg_x U(x) \le \deg_x V(x) = p$.
  We thus conclude that $H(x) = \tfrac{U(x)}{V(x)}$
  belongs to $\C[1/x]$.
  The proof for $h^{\iota_2} = h$ is similar.
\end{proof}

\begin{lemma} \label{lem:hypertrans}
  Assume that the following two conditions hold:
  \begin{enumerate}[label=(\arabic*)]
  \item \label{lem:hypertrans:item1} For any pair of solutions $(h_1,h_2)$ of \eqref{eq:eq_lin_inhom},
    one of the functions $h_1$ or $h_2$ has its poles in $\{0,\infty\}$.
  \item \label{lem:hypertrans:item2} The only solution of~\eqref{eq:eq_lin_hom} is $(0,0)$.
  \end{enumerate}
  Then $Q(x,y)$ is non $x$-D-algebraic nor $y$-D-algebraic.
\end{lemma}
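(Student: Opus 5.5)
We argue by contradiction. Suppose $Q(x,y)$ is $x$-D-algebraic (or $y$-D-algebraic). By Proposition~\ref{prop:Qx_Fs}, $\Ftld$ and $\Gtld$ are then both $s$-D-algebraic, and Proposition~\ref{thm:ishizaki} shows that $\Ftld,\Gtld\in\C(s)$. Lemma~\ref{lem:rat_sol_dcpl} therefore applies: $(\ftld_h,\gtld_h)$ solves \eqref{eq:eq_lin_hom}, and $(\ftld,\gtld)$ solves \eqref{eq:eq_lin_inhom}. Hypothesis~\ref{lem:hypertrans:item2} gives $\ftld_h=\gtld_h=0$, so $\Ftld=\ftld$ is $\iota_1$-invariant and $\Gtld=\gtld$ is $\iota_2$-invariant. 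By Proposition~\ref{prop:recap_gal}, there exist $f\in\C(x)$ and $g\in\C(y)$ with $\Ftld(s)=f(x(s))$ and $\Gtld(s)=g(y(s))$. Hypothesis~\ref{lem:hypertrans:item1} applied to $(\ftld,\gtld)$ says that one of them, say $\Ftld$ (the other case is symmetric), has all its poles in $\{0,\infty\}$.

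The contradiction comes from combining Lemma~\ref{lem:lift_laurent} with the power series nature of $\Ftld$ near $0$. By Lemma~\ref{lem:lift_laurent} there is a Laurent polynomial $H(x)\in\C[1/x]$ with $H(x(s))=\Ftld(s)$. On the neighbourhood $V$ of $0$ we also have $\Ftld(s)=x(s)Q(x(s),0)$, where $Q(x,0)$ is analytic at $x=0$ for our fixed small $t$. The Laurent series $H(x)-xQ(x,0)$ is meromorphic at $x=0$ and vanishes on $x(s)$ for $s$ near $0$, so Proposition~\ref{prop:lift_an_pow} gives $xQ(x,0)=H(x)$. The left-hand side is a power series with no constant term, while $H\in\C[1/x]$ has only nonpositive powers. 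Hence both are $0$, which forces $Q(x,0)=0$. This is impossible, since the constant term of $Q(x,0)$ is $1$ (the empty walk). In the symmetric case, where $\Gtld$ has its poles in $\{0,\infty\}$, the same argument with $y$ gives $yQ(0,y)=0$, again a contradiction.

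The main points to check carefully are these. First, the use of Proposition~\ref{prop:lift_an_pow} needs $H(x(s))$ and $x(s)Q(x(s),0)$ to agree near $s=0$; this holds because $\Ftld$ is the meromorphic continuation of $\Fcap$ from $V\ni 0$. Second, "$xQ(x,0)$ has no constant term" must be read in $\C\llpar x\rrpar$ after $t$ is specialized. Here one uses that $Q(x,0)$ evaluated at our real $t$ has value $1$ at $x=0$, because only the empty walk ends at $(0,0)$, as noted in Section~\ref{sec:restr_models}. Finally, hypothesis~\ref{lem:hypertrans:item1} is stated for solutions of \eqref{eq:eq_lin_inhom}, and $(\ftld,\gtld)$ is such a solution, so it applies directly.
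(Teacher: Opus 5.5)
Your proof is correct and follows essentially the same route as the paper. You get rationality of $\Ftld,\Gtld$ from Proposition~\ref{prop:Qx_Fs} and Ishizaki, kill the homogeneous part with hypothesis~(2), lift the pole-restricted component to an element of $\C[1/x]$ via Lemma~\ref{lem:lift_laurent}, and contradict $Q(0,0)=1$ via Proposition~\ref{prop:lift_an_pow}. Your closing observation that $x\C\{x\}\cap\C[1/x]=\{0\}$ is the same as the paper's degree comparison in $x^{d+1}Q(x,0)=P(x)$.
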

\begin{proof}
  Assume that $Q(x,y)$ is $x$-D-algebraic or $y$-D-algebraic,
  then by Proposition~\ref{prop:Qx_Fs} the functions $\Ftld$ and $\Gtld$
  are rational. Hence, by Lemma~\ref{lem:rat_sol_dcpl}, the pair
  $(\ftld,\gtld)$ satisfies~(\ref{eq:eq_lin_inhom}) and the pair
  $(\ftld_h,\gtld_h)$ satisfies~(\ref{eq:eq_lin_hom}),
  with $\Ftld = \ftld + \ftld_h$ and $\Gtld = \gtld + \gtld_h$.

  From~\ref{lem:hypertrans:item1},
  assume without loss of generality that the poles of $\ftld$
  are in the set $\{0, \infty\}$.
  Then by Lemma~\ref{lem:lift_laurent} applied to $\ftld$,
  there exists a Laurent polynomial $f(x) \in \C[1/x]$
  such that $f(x(s)) = \ftld(s)$. Denote by $-d$ the
  valuation of $f(x)$, so that $P(x) \eqdef x^d f(x)$ is a
  polynomial of degree at most $d$.
  From~\ref{lem:hypertrans:item2}, we have that
  $\ftld_h = 0$, hence $\Ftld(s) = \ftld(s)$. Therefore, as $\Ftld(s)$
  is a continuation of $\Fcap(s)$, we have for $s \in V$ the
  equation
  \[
    x(s) Q(x(s),0) - f(x(s)) = 0.
  \]
  The function $x Q(x,0) - f(x)$ is meromorphic
  at $x=0$, hence by Lemma~\ref{prop:lift_an_pow} it is
  zero, so we have the equation \[
    x^{d+1} Q(x,0) = P(x).
  \]
  But we have that $Q(x,0) = 1 + O(x)$, which is a contradiction
  because $P(x)$ has degree at most~$d$.
\end{proof}

\subsection{Retrieving D-algebraic solutions}

In the remaining cases, we are able to prove that $Q(x,y)$ is D-algebraic
by lifting rational solutions (in $\C(s)$) to the decoupling equations~(\ref{eq:eq_lin_inhom})
and~(\ref{eq:eq_lin_hom}) to algebraic solutions (over $\C(x,y,t)$) of the
main functional equation (\ref{eq:eq_func_Qxy_general}).
Recall that $\omega = \frac{1}{ab} \left(a+b-a b\right)$, as defined in (\ref{notat:eq})

\begin{lemma} \label{lem:qdiff_rat}
  Assume that $\omega = 0$,
  and that $(h_1,h_2)$ is a nonzero solution to (\ref{eq:eq_lin_inhom})
  or (\ref{eq:eq_lin_hom}).
  Then the function $h_2$ satisfies the identity
  $\frac{h_2^{\sigma}(s)}{h_2(s)} = \frac{\gamtld^{\iota_1}(s)}{\gamtld(s)}$.
\end{lemma}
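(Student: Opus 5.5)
When $\omega = 0$, the two decoupling equations have the same linear part, so both reduce to $\gamtld_1 h_1 + \gamtld_2 h_2 = 0$, with $h_1^{\iota_1} = \epsilon h_1$ and $h_2^{\iota_2} = \epsilon h_2$ for a fixed sign $\epsilon \in \{1,-1\}$. I will use this common form and carry the sign $\epsilon$ through the computation.

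First I check that $h_2 \neq 0$. If $h_2 = 0$, then $\gamtld_1 h_1 = 0$. Since $\gamtld_1$ is a nonzero element of $\C(s)$ (it has a nontrivial divisor by Proposition~\ref{prop:div_gamma}), this forces $h_1 = 0$, contradicting that the solution is nonzero. The same reasoning shows $h_1 \neq 0$. Dividing by $\gamtld_2$ and using $\gamtld = \gamtld_1/\gamtld_2$ then gives
\[
  h_2 = -\gamtld\, h_1 .
\]

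Next I apply $\iota_1$ to this identity. Using $h_1^{\iota_1} = \epsilon h_1$, I get $h_2^{\iota_1} = -\gamtld^{\iota_1}\,\epsilon h_1$. Combining with $h_1 = -h_2/\gamtld$ yields
\[
  h_2^{\iota_1} = \epsilon\, \frac{\gamtld^{\iota_1}}{\gamtld}\, h_2 .
\]

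It remains to express $h_2^{\sigma}$ in terms of $h_2^{\iota_1}$. Since $\sigma = \iota_2\circ\iota_1$, we have $h_2^{\sigma}(s) = h_2(\iota_2(\iota_1(s)))$. On the other hand, $h_2^{\iota_2}(u) = h_2(\iota_2(u)) = \epsilon h_2(u)$ for all $u$, and taking $u = \iota_1(s)$ gives
\[
  h_2^{\sigma} = \epsilon\, h_2^{\iota_1}.
\]
The two signs multiply to $\epsilon^2 = 1$, so
\[
  h_2^{\sigma} = \frac{\gamtld^{\iota_1}}{\gamtld}\, h_2 ,
\]
and dividing by $h_2 \neq 0$ gives the claim.

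The main obstacle is keeping the right-action convention straight, i.e.\ checking that $h_2^{\sigma}$ really equals $(h_2^{\iota_1})^{\iota_2}$ in the order required. It also matters that $\iota_1$ is applied to the coefficient $\gamtld$ and not to $h_2$ alone. I will verify this directly in the variable $s$ via the explicit formulas $\iota_1(s)=1/s$ and $\sigma(s)=qs$, namely $\iota_2(1/s) = qs$.
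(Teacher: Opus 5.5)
Your proof is correct and follows the paper's argument essentially step for step. You start from $\gamtld\, h_1 = -h_2$ (valid for both equations since $\omega=0$), apply $\iota_1$ using $h_1^{\iota_1}=\varepsilon h_1$, eliminate $h_1$, and convert $h_2^{\iota_1}$ into $\varepsilon h_2^{\sigma}$ via $\sigma=\iota_2\circ\iota_1$ and $h_2^{\iota_2}=\varepsilon h_2$; your explicit check that $h_2\neq 0$ (deduced from $\gamtld_1\neq 0$) is a detail the paper leaves implicit but which is needed to divide by $h_2$.
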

\begin{proof}
  Let $(h_1,h_2)$ be a pair solution
  to either (\ref{eq:eq_lin_inhom}) or (\ref{eq:eq_lin_hom}).
  In both cases, there exists some $\varepsilon \in \{-1,1\}$ such that
  $h_1^{\iota_1}(s) = \varepsilon h_1(s)$
  and $h_2^{\iota_2}(s) = \varepsilon h_2(s)$.

  Analogously to what was done
  in the first section, we start from the identity
  \begin{equation} \label{eq:lem_lift_insol1}
    \gamtld(s) h_1(s) = - h_2(s).
  \end{equation}
  Applying $\iota_1$ on both sides of the equation and using
  the relation $h_1^{\iota_1} = \varepsilon h_1$, we obtain
  \begin{equation} \label{eq:lem_lift_insol2}
    \varepsilon \gamtld^{\iota_1}(s) h_1(s) = - h_2^{\iota_1}(s).
  \end{equation}
  Eliminating $h_1(s)$ between~(\ref{eq:lem_lift_insol1}) and (\ref{eq:lem_lift_insol2}),
  and using the identity
  $h_2^{\iota_1}(s) = \varepsilon h_2^{\iota_2\, \iota_1}(s) = \varepsilon h_2^{\sigma}(s)$
  shows the claim.
\end{proof}

\begin{lemma} \label{lem:lift_sol}
  Assume that $\omega = 0$.
  \begin{enumerate}
  \item \label{lem:lift_sol:item1} If \eqref{eq:eq_lin_inhom} admits a nonzero
    solution $(h_1,h_2) \in \C(s)$, then $Q(x,y)$ is rational
    in $x$ and $y$
    (for the fixed $t$ of Proposition~\ref{prop:holom_eq}).
    More precisely, there exist
    $H_1(z), H_2(z) \in \C(z)$
    such that $H_1(x(s)) = h_1(s)$ and $H_2(y(s)) = h_2(s)$,
    and $\lambda \in \C$
    such that
    \begin{align*}
      x Q(x,0) &= \lambda H_1(x),
      & y Q(0,y) &= \lambda H_2(y).
    \end{align*}

  \item \label{lem:lift_sol:item2} If \eqref{eq:eq_lin_hom} admits a nonzero solution
    $(h_1,h_2) \in \C(s)$,
    then $Q(x,y)$ is algebraic over $\C(x,y)$ (for the fixed $t$ of
    Proposition~\ref{prop:holom_eq}). More precisely, there exist
    $H_1(z), H_2(z) \in \C(z)$ such that $H_1(x(s)) = h_1(s)^2$
    and $H_2(y(s)) = h_2(s)^2$, and $\lambda \in \C$ such that
    \begin{align*}
      x Q(x,0) &= \pm \lambda \sqrt{H_1(x)},
      & y Q(0,y) &= \pm \lambda \sqrt{H_2(y)}.
    \end{align*}
  \end{enumerate}
\end{lemma}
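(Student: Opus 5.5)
The plan is to show that, when $\omega=0$, $\Ftld$ is a constant multiple of $h_1$ (or equivalently $\Gtld$ of $h_2$). We then lift this identity from the curve to power series using Proposition~\ref{prop:lift_an_pow}.

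\textbf{The $q$-difference equation for $\Gtld$.} With $\omega=0$, equation~(\ref{eq:g_qdiff}) becomes homogeneous:
\[\Gtld^{\sigma}(s)=\frac{\gamtld^{\iota_1}}{\gamtld}(s)\,\Gtld(s).\]
Take a nonzero solution $(h_1,h_2)$ of (\ref{eq:eq_lin_inhom}) or (\ref{eq:eq_lin_hom}). Lemma~\ref{lem:qdiff_rat} says that $h_2$ satisfies the same equation, $h_2^\sigma/h_2=\gamtld^{\iota_1}/\gamtld$. Note that $h_2\neq 0$: if $h_2=0$, the equation $\gamtld_1 h_1=-\gamtld_2 h_2$ forces $h_1=0$, since $\gamtld_1\neq0$.

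\textbf{The ratio is constant.} The quotient $\Gtld/h_2$ is meromorphic on $\C$ and is $\sigma$-invariant. This alone does not make it constant, since elliptic functions of $\log s$ are also $\sigma$-invariant. So I would first show $\Gtld$ is rational. Here $\sigma$-invariance of $\Gtld/h_2$, together with meromorphy of $\Gtld$ at $s=0$ (it continues $y(s)Q(0,y(s))$, which is analytic at $0$), should force constancy. Indeed, write $\Gtld/h_2=c(s)$, meromorphic at $0$ and satisfying $c(qs)=c(s)$. Expanding as a Laurent series at $0$ gives $\sum c_n q^n s^n=\sum c_n s^n$. Since $q$ is not a root of unity, $c_n=0$ for $n\neq 0$, so $c=\lambda\in\C$. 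Hence $\Gtld=\lambda h_2$. From $\gamtld_1\Ftld+\gamtld_2\Gtld=0$ and $\gamtld_1h_1+\gamtld_2h_2=0$ we then get $\Ftld=\lambda h_1$.

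\textbf{Lifting to power series.} In case~(\ref{eq:eq_lin_inhom}) we have $h_1^{\iota_1}=h_1$ and $h_2^{\iota_2}=h_2$. By Proposition~\ref{prop:recap_gal} there are $H_1,H_2\in\C(z)$ with $h_1=H_1(x(s))$ and $h_2=H_2(y(s))$. On $V$ this gives $x(s)Q(x(s),0)-\lambda H_1(x(s))=0$. The function $xQ(x,0)-\lambda H_1(x)$ is meromorphic at $x=0$, so Proposition~\ref{prop:lift_an_pow} yields $xQ(x,0)=\lambda H_1(x)$, and similarly for $y$. Rationality of $Q(x,y)$ then follows from (\ref{eq:eq_func_Qxy_gen0}) with $\omega=0$, by dividing by $K$.

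In case~(\ref{eq:eq_lin_hom}), $h_1^2$ is $\iota_1$-invariant and $h_2^2$ is $\iota_2$-invariant, so $h_1^2=H_1(x(s))$ and $h_2^2=H_2(y(s))$. Then $(xQ(x,0))^2-\lambda^2H_1(x)$ vanishes on $x(V)$, and Proposition~\ref{prop:lift_an_pow} gives $xQ(x,0)=\pm\lambda\sqrt{H_1(x)}$. Algebraicity of $Q(x,y)$ again follows from (\ref{eq:eq_func_Qxy_gen0}).

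\textbf{Main obstacle.} The delicate step is the constancy argument: a $\sigma$-invariant function that is meromorphic at $0$ must be constant. I expect it to go through as above, since $\Gtld$ and $h_2$ are meromorphic at $0$ and hence so is their ratio, but it is the step to check carefully.
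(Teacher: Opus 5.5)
Your proof is correct and takes essentially the same route as the paper. Lemma~\ref{lem:qdiff_rat} together with the homogeneous form of (\ref{eq:g_qdiff}) makes $\Gtld/h_2$ invariant under $s\mapsto qs$; the ratio is therefore a constant $\lambda$, so $\Gtld=\lambda h_2$ and $\Ftld=\lambda h_1$, and Propositions~\ref{prop:recap_gal} and~\ref{prop:lift_an_pow} then lift these identities to $xQ(x,0)$ and $yQ(0,y)$. Your Laurent-expansion argument at $s=0$ supplies exactly the justification behind the paper's terse ``meromorphic on $\C$ and $|q|\neq 1$, hence constant.'' The announced detour of first showing $\Gtld$ is rational is unnecessary, since constancy of the ratio already gives that.
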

\begin{proof}
  Let $(h_1,h_2)$ be a nonzero
  solution to~(\ref{eq:eq_lin_inhom})
  or~(\ref{eq:eq_lin_hom}).
  By Lemma~\ref{lem:qdiff_rat}, the function $h_2(s)$ satisfies
  \begin{equation} \label{eq:lem_lift_qeq1}
    \frac{h_2^{\sigma}(s)}{h_2(s)} = \frac{\gamtld^{\iota_1}(s)}{\gamtld(s)}.
  \end{equation}
  Now, considering the function $H(s) \eqdef
  \frac{\Gtld(s)}{h_2(s)}$ (recall that $h_2$ is nonzero), we see by combining
  equations~(\ref{eq:lem_lift_qeq1}) and (\ref{eq:g_qdiff}) that
  $H(qs) = H(s)$. The function $H(s)$ is meromorphic on $\C$
  and $|q| \neq 1$, hence $H(s)$ is a constant.
  Therefore, there exists $\lambda \in \C$ such that
  $\Gtld(s) = \lambda h_2(s)$. By Equation~(\ref{eq:lin_rel_ftld_gtld}),
  we also deduce that $\Ftld(s) = \lambda h_1(s)$.

  We can now prove the two cases of the lemma:
  \begin{enumerate}
  \item[(\ref{lem:lift_sol:item1})] If $(h_1(s),h_2(s))$ is a solution to (\ref{eq:eq_lin_inhom}),
    then by Proposition~\ref{prop:recap_gal},
    there exist $H_1(x) \in \C(x)$ and $H_2(y) \in \C(y)$ such that
    $H_1(x(s)) = h_1(s)$
    and $H_2(y(s)) = h_2(s)$.
    Therefore, $x(s) Q(x(s),0) = \lambda H_1(x(s))$,
    and $y(s) Q(0,y(s)) = \lambda H_2(y(s))$ for all $s$ in $V$.
    Thus, as these functions are meromorphic at $s=0$,
    Proposition~\ref{prop:lift_an_pow} yields
    $x Q(x,0) = \lambda H_1(x)$ and $y Q(0,y) = \lambda H_2(y)$.
  \item[(\ref{lem:lift_sol:item2})] If $(h_1(s),h_2(s))$ is a solution to (\ref{eq:eq_lin_hom}),
    then $(h_1(s)^2)^{\iota_1} = h_1(s)^2$ and $(h_2(s)^2)^{\iota_2}
    = h_2(s)^2$. By Proposition~\ref{prop:recap_gal},
    there exist $H_1(x) \in \C(x)$ and
    $H_2(y) \in \C(y)$ such that $H_1(x(s)) = h_1(s)^2$
    and $H_2(y(s)) = h_2(s)^2$.
    Therefore, $x(s)^2 Q(x(s),0)^2 = \lambda^2 H_1(x(s))$
    and $y(s)^2 Q(0,y(s)^2 = \lambda^2 H_2(y(s))$ for all $s$ in $V$.
    Thus, as these functions are meromorphic at $s=0$,
    Proposition~\ref{prop:lift_an_pow} yields
    $x^2 Q(x,0)^2 = \lambda^2 H_1(x)$ and
    $y^2 Q(0,y)^2 = \lambda^2 H_2(y)$. \qedhere
  \end{enumerate}
\end{proof}

\begin{remark}
  \begin{enumerate}
  \item The value of $\lambda$ can be found by evaluating $H_1(x)$
    at $x=0$ (resp. $H_2(y)$ at $y=0$), and using the fact that
    $Q(0,0) = 1$.
  \item
    Note that the expressions for $H_1(x)$ and $H_2(y)$
    depend \emph{a priori} on the value of the real number $t$.
    However, in all the algebraic cases, the fractions $H_1(z)$
    and $H_2(z)$ will be fixed
    fractions of $\Q(d_{i,j}, a, b, t, z)$ analytic
    at $t=0$. Thus, we may lift the solutions for $Q(x,0)$ and $Q(0,y)$
    as formal power series in $t$.
  \end{enumerate}
  \exqed
\end{remark}

The classification will go as follows:
for every model and parameters, we will show that we are either
in the case of application of Lemma \ref{lem:hypertrans}
or Lemma \ref{lem:lift_sol}, by studying the decoupling equations (\ref{eq:eq_lin_inhom})
and (\ref{eq:eq_lin_hom}).

\section{Decoupling equations}  \label{sect:decoupl_eq}

In the previous section, we reduced the classification to the study
of two decoupling equations (\ref{eq:eq_lin_inhom}) and (\ref{eq:eq_lin_hom}).
According to the previously designed strategy, we will investigate
both equations separately.

\subsection{Homogeneous equation}
We first handle the homogeneous equation (\ref{eq:eq_lin_hom}),
which corresponds to the standard case of a \emph{multiplicative
  decoupling} (like for instance those in \cite{BMEFHR}),
in this case of the fraction $\gamtld \eqdef  \gamtld_1 / \gamtld_2$.
We want to determine in which cases this
equation admits rational solutions.
When they exist, we provide them explicitly.
Otherwise, we provide two distinct arguments
to show the non-existence of rational solutions.
The first one is standard, and revolves around a process
called \emph{pole propagation}.

\begin{lemma} \label{lem:propag_hom}
  Let $u(s)$ be in $\C(s)$.
  Assume that there exists a pole (resp. zero) $P \neq \{0, \infty\}$ of $u(s)$ such that
  for all $n$ in $\Z$ the point $\sigma^n P$ is never
  a zero (resp. pole) of $u(s)$.

  Then there is no nonzero $h(s) \in \C(s)$ such that
  $h^{\sigma}(s) = u(s) h(s)$.
\end{lemma}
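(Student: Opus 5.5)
We argue by contradiction: suppose $h \in \C(s)$ is nonzero and satisfies $h^{\sigma} = u\, h$, i.e. $h(qs) = u(s) h(s)$. We treat the case where $P$ is a pole of $u$; the zero case follows by applying the same argument to $1/h$, which satisfies $(1/h)^{\sigma} = u^{-1} \cdot (1/h)$ and swaps the roles of zeros and poles of $u$. The tool is the valuation $\ord_R$ at points $R \neq 0, \infty$ of $\P^1$. Since $\sigma(s) = qs$ is an automorphism of $\P^1$ fixing $0$ and $\infty$, we have $\ord_R(h^{\sigma}) = \ord_{\sigma R}(h)$.

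Taking $\ord$ of the functional equation at $\sigma^n P$ gives, for every $n \in \Z$,
\[
  \ord_{\sigma^{n+1} P}(h) = \ord_{\sigma^n P}(u) + \ord_{\sigma^n P}(h).
\]
By hypothesis no $\sigma^n P$ is a zero of $u$, so $\ord_{\sigma^n P}(u) \le 0$ for all $n$, with strict inequality at $n=0$. Hence the sequence $a_n \eqdef \ord_{\sigma^n P}(h)$ is nonincreasing in $n$ and satisfies $a_1 < a_0$.

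Now we use that $h$ is a nonzero rational function, so only finitely many points have nonzero order. By Proposition~\ref{prop:recap_gen0}, $P \notin \{0,\infty\}$ has infinite $\sigma$-orbit, so the points $\sigma^n P$ are pairwise distinct. Consequently $a_n = 0$ for all $|n|$ large enough. But a nonincreasing integer sequence that vanishes for $n \ll 0$ and for $n \gg 0$ must be constant equal to $0$, which contradicts $a_1 < a_0$. Therefore no nonzero $h$ exists.

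There is no real obstacle here. The only points to check are the sign convention for $h^{\sigma}$, namely $\ord_R(h^\sigma) = \ord_{\sigma R}(h)$, and that the orbit of $P$ is infinite, which is exactly why the statement excludes $0$ and $\infty$.
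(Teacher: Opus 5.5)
Your proof is correct. The paper gives no argument of its own: it calls the lemma elementary and cites \cite[Lemma 3.5]{HS}. Your tracking of $\ord_{\sigma^n P}(h)$ along the infinite $\sigma$-orbit of $P$ is the standard pole-propagation argument behind that citation; equivalently, summing your recursion over $n$ telescopes to $\sum_{n\in\Z}\ord_{\sigma^n P}(u)=0$, contradicting that every term is $\le 0$ and the $n=0$ term is $<0$. Passing to $u^{-1}$ in the zero case is legitimate, since a nonzero solution $h$ forces $u=h^{\sigma}/h\neq 0$.
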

\begin{proof}
  The result is elementary, see for
  instance~\cite[Lemma 3.5]{HS} for a proof. \qedhere
\end{proof}

We apply this lemma to the homogeneous equation (\ref{eq:eq_lin_hom}).
\begin{corollary} \label{cor:strange_poles}
  Assume that there exists $P \neq \{0, \infty\}$ a pole (resp. zero) of $\frac{\gamtld^{\iota_1}}{\gamtld}$
  such that for all $n$ in $\Z$ the point $\sigma^n P$ is never
  a zero (resp. pole) of $\frac{\gamtld^{\iota_1}}{\gamtld}$.
  Then the equation (\ref{eq:eq_lin_hom}) has no nonzero rational
  solution.
\end{corollary}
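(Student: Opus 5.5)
The plan is to argue by contradiction, combining Lemma~\ref{lem:propag_hom} with the identity of Lemma~\ref{lem:qdiff_rat}, but without assuming $\omega = 0$. The only role of $\omega$ in the proof of Lemma~\ref{lem:qdiff_rat} is through the inhomogeneous equation. For the homogeneous equation~(\ref{eq:eq_lin_hom}), the same computation goes through for any value of $\omega$.

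So assume $(h_1,h_2)$ is a nonzero rational solution of~(\ref{eq:eq_lin_hom}). First I will check that $h_2 \neq 0$. If $h_2 = 0$, then $\gamtld_1 h_1 = 0$, and since $\gamtld_1$ is a nonzero function (Proposition~\ref{prop:div_gamma}), this forces $h_1 = 0$, a contradiction. By the same argument $h_1 \neq 0$. Also $\gamtld_2 \neq 0$, so $\gamtld = \gamtld_1/\gamtld_2$ is a well-defined nonzero rational function, and the equation can be rewritten as $\gamtld h_1 = -h_2$.

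Next I will redo the elimination from Lemma~\ref{lem:qdiff_rat} with $\varepsilon = -1$. Applying $\iota_1$ to $\gamtld h_1 = -h_2$ and using $h_1^{\iota_1} = -h_1$ gives $-\gamtld^{\iota_1} h_1 = -h_2^{\iota_1}$. Since $\iota_1 = \iota_2\sigma$ as maps, we have $h_2^{\iota_1} = (h_2^{\iota_2})^{\sigma}$. Using $h_2^{\iota_2} = -h_2$, this gives $h_2^{\iota_1} = -h_2^{\sigma}$. Dividing the two relations (all functions involved are nonzero) yields
\[
  h_2^{\sigma}(s) = \frac{\gamtld^{\iota_1}}{\gamtld}(s)\, h_2(s).
\]
The only point requiring care is the composition-order convention, $h^{\tau_1\tau_2} = (h^{\tau_1})^{\tau_2}$. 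Here it suffices to note that $\iota_2\circ\sigma(s) = \iota_2(qs) = 1/s = \iota_1(s)$.

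Finally, apply Lemma~\ref{lem:propag_hom} with $u = \gamtld^{\iota_1}/\gamtld$ and $h = h_2$. The hypothesis of the corollary, a pole (resp. zero) $P \notin \{0,\infty\}$ of $u$ whose whole $\sigma$-orbit avoids the zeros (resp. poles) of $u$, is exactly the hypothesis of that lemma. The lemma therefore gives that no nonzero rational $h_2$ satisfies this equation, contradicting the previous step. I do not expect any real obstacle; the main thing to watch is that the argument does not silently rely on $\omega = 0$, which it does not, since the homogeneous equation has no constant term.
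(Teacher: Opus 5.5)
Your proof is correct and is essentially the paper's argument: you derive $h_2^{\sigma} = \frac{\gamtld^{\iota_1}}{\gamtld}\, h_2$ by the elimination of Lemma~\ref{lem:qdiff_rat} and conclude with Lemma~\ref{lem:propag_hom}. The one difference works in your favour: the paper cites Lemma~\ref{lem:qdiff_rat} directly even though that lemma is stated under the hypothesis $\omega = 0$, whereas you correctly observe, and check, that this hypothesis plays no role for the homogeneous equation~(\ref{eq:eq_lin_hom}).
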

\begin{proof}
  Assume that $(h_1, h_2)$ is a nonzero solution to (\ref{eq:eq_lin_hom}).
  Then by Lemma~\ref{lem:qdiff_rat}, the function $h_2(s)$
  satisfies the equation $\frac{h_2^{\sigma}(s)}{h_2(s)} = \frac{\gamtld^{\iota_1}(s) }{ \gamtld(s)} $, which by assumption and Lemma~\ref{lem:propag_hom}
  has no nonzero rational solution, a contradiction.
\end{proof}

Unfortunately, this pole propagation technique does not discard
all cases where the homogeneous solution has no solution. Mainly,
it may happen that there exists a pair of fractions $(h_1,h_2)$
satisfying the linear relation with ${h'_1}^{\iota_1}(s) = \pm h'_1(s)$
and ${h'_2}^{\iota_2}(s) = \pm h'_2(s)$ (we call
such relaxed solutions \emph{signed solutions}).
In this case, Corollary~\ref{cor:strange_poles}
will not apply.
However, when such
solution with ``wrong'' signs for either ${h'_1}^{\iota_1}(s) / {h'_1}(s)$
or ${h'_2}^{\iota_2}(s) / h'_2(s)$ exists, we show
that (\ref{eq:eq_lin_hom}) has no nonzero rational solution
(with the ``right'' signs).

\begin{lemma} \label{lem:wrong_signs_hom}
  Let $(h'_1,h'_2)$ be a nonzero pair satisfying the relation
  $\gamtld_1(s) h'_1(s) + \gamtld_2(s) h'_2(s) = 0$
  with $(h'_1)^{\iota_1} = \pm h'_1$ and $(h'_2)^{\iota_2} = \pm h'_2$.
  If $(h'_1)^{\iota_1} = h'_1$ or $(h'_2)^{\iota_2} = h'_2$, then
  Equation~(\ref{eq:eq_lin_hom}) has no nontrivial rational
  solution.
\end{lemma}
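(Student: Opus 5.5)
Suppose by contradiction that $(h_1,h_2)$ is a nonzero rational solution of (\ref{eq:eq_lin_hom}), so $h_1^{\iota_1}=-h_1$ and $h_2^{\iota_2}=-h_2$. The idea is to compare it with the signed solution $(h'_1,h'_2)$ through their ratio. First, both pairs have both components nonzero: if $h_1=0$ then $\gamtld_2 h_2=0$, hence $h_2=0$, since $\gamtld_2\neq 0$ by Proposition~\ref{prop:div_gamma}; the same holds for $(h'_1,h'_2)$.

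From the two linear relations we get $h_1/h_2 = -\gamtld_2/\gamtld_1 = h'_1/h'_2$. Hence
\[
  r \eqdef \frac{h_1}{h'_1} = \frac{h_2}{h'_2} \in \C(s)^{\star}.
\]
Write $(h'_1)^{\iota_1}=\varepsilon_1 h'_1$ and $(h'_2)^{\iota_2}=\varepsilon_2 h'_2$ with $\varepsilon_i\in\{\pm1\}$. Then
\[
  r^{\iota_1} = \frac{h_1^{\iota_1}}{(h'_1)^{\iota_1}} = -\varepsilon_1 r,
  \qquad
  r^{\iota_2} = \frac{h_2^{\iota_2}}{(h'_2)^{\iota_2}} = -\varepsilon_2 r.
\]

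The key step is that $r^2$ is fixed by both $\iota_1$ and $\iota_2$. By Proposition~\ref{prop:recap_gal}(3), $r^2\in\C$, so $r$ is a nonzero constant. A constant is fixed by $\iota_1$ and $\iota_2$, which forces $-\varepsilon_1=1$ and $-\varepsilon_2=1$, that is $\varepsilon_1=\varepsilon_2=-1$. This contradicts the hypothesis that $\varepsilon_1=1$ or $\varepsilon_2=1$. Therefore (\ref{eq:eq_lin_hom}) has no nonzero rational solution.

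There is no real obstacle here. The only points needing care are that the ratio $r$ is well defined, which follows from the nonvanishing of $\gamtld_1$, $\gamtld_2$, $h'_1$ and $h'_2$, and that Proposition~\ref{prop:recap_gal}(3) applies to $r^2$.
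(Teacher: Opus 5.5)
Your proof is correct and follows the paper's argument: the same ratio $r = h_1/h'_1 = h_2/h'_2$, the observation that $r^2$ is invariant under both involutions and hence constant, and the resulting clash of signs. The only difference is presentational. You explicitly check that all components are nonzero so the ratio is well defined, and you conclude via $\varepsilon_1=\varepsilon_2=-1$, whereas the paper concludes directly that $h_1=0$.
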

\begin{proof}
  Assume that $(h'_1,h'_2)$ is such a pair, and let $(h_1, h_2)$
  be a pair of rational solutions to (\ref{eq:eq_lin_hom}).
  Then we have the equation
  \[
    \frac{h_1}{h'_1} = \frac{h_2}{h'_2} =: u.
  \]
  Now, by the symmetries of the $h_{1,2}$ and $h'_{1,2}$, we
  have that $(u^2)^{\iota_1} = (u^2)^{\iota_2} = u^2$. Therefore,
  the fraction $u^2$ is fixed by $\sigma$, hence $u^2 \in \C$,
  so $u \in \C$.

  Assuming that $(h'_1)^{\iota_1} = h'_1$, we obtain that
  $h_1^{\iota_1} = (u h'_1)^{\iota_1} = u h'_1 = h_1$.
  As $h_1$ satisfies also $h_1^{\iota_1} = - h_1$, we deduce
  that $h_1 = 0$, and thus that $h_2 = 0$.

  Similarly, assuming that $(h'_2)^{\iota_2} = h'_2$,
  we obtain that $(h_1,h_2) = (0,0)$.
\end{proof}

\subsection{Inhomogeneous equation}

To handle the inhomogeneous equation, we will proceed as in
Lemma~\ref{lem:propag_hom} by proving a propagation lemma
adapted to (\ref{eq:eq_lin_inhom}) (namely Lemma~\ref{lem:propag} below).
Unlike the previous case,
this lemma will only restrict the possible poles of
$h_1$ and $h_2$ to a finite set
of points of $\P^1$, for $(h_1,h_2)$ a solution.
In most cases, we will be able
to show that one of $h_{1,2}$ must have its poles
restricted to the set $\{0,\infty\}$,
which is one of the requirements of Lemma~\ref{lem:hypertrans}
which shows non-D-algebraicity of $Q(x,y)$ in $x$ and $y$.

Before we state and prove the propagation lemma for
the inhomogeneous equation, we recall two
easy facts on poles of rational maps on a curve.  The first fact
concerns the relations between the poles of two functions related by a
linear equation.

\begin{lemma} \label{lem:propag_null}
  Assume that $(h_1,h_2) \in \C(s)^2$ satisfies the relation
  $u_1 h_1 + u_2 h_2 + u_3 = 0$ for some $u_1$, $u_2$, $u_3$ in $\C(s)$.
  If $P$ is a pole of $h_1$ not in $\{ (u_1)_0, (u_2)_{\infty}, (u_3)_{\infty}\}$,
  then it is a pole of $h_2$.
\end{lemma}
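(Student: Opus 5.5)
The plan is to solve the relation for $h_2$ and read off its behaviour at $P$ from the orders of the other terms. The notation $\{(u_1)_0, (u_2)_{\infty}, (u_3)_{\infty}\}$ is read as the set of points in the supports of these divisors: zeros of $u_1$, poles of $u_2$, poles of $u_3$. We may assume $u_2 \neq 0$, since otherwise the statement is vacuous or degenerate. The argument uses the discrete valuation $\ord_P$ on $\C(s)$, with poles counted as negative orders, and its two basic properties: $\ord_P(uv) = \ord_P(u) + \ord_P(v)$, and $\ord_P(u+v) = \min(\ord_P u, \ord_P v)$ whenever the two orders differ.

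First, compute the order of $u_1 h_1$ at $P$. Since $P$ is not a zero of $u_1$, we have $\ord_P(u_1) \le 0$. Since $P$ is a pole of $h_1$, we have $\ord_P(h_1) < 0$. Hence $\ord_P(u_1 h_1) < 0$, so $P$ is a pole of $u_1 h_1$.

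Next, compare with the constant term. Since $P$ is not a pole of $u_3$, we have $\ord_P(u_3) \ge 0$. The two orders are therefore different, so
\[
  \ord_P(u_1 h_1 + u_3) = \ord_P(u_1 h_1) < 0 .
\]
From the relation, $u_2 h_2 = -(u_1 h_1 + u_3)$, so $\ord_P(u_2 h_2) < 0$.

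Finally, isolate $h_2$. We have $\ord_P(h_2) = \ord_P(u_2 h_2) - \ord_P(u_2)$. Since $P$ is not a pole of $u_2$, $\ord_P(u_2) \ge 0$, and therefore $\ord_P(h_2) \le \ord_P(u_2 h_2) < 0$. Thus $P$ is a pole of $h_2$.

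The main point to watch is that each of the three hypotheses is used in the correct direction. The valuation inequalities are then immediate, and no real obstacle is expected.
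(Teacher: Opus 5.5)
Your proof is correct and follows essentially the same route as the paper's: $P$ is a pole of $u_1h_1$ because it is not a zero of $u_1$, hence a pole of $u_2h_2$ because it is not a pole of $u_3$, hence a pole of $h_2$ because it is not a pole of $u_2$. You phrase these steps with $\ord_P$ where the paper argues directly with poles, and you also dispose of the degenerate case $u_2=0$, which the paper leaves implicit.
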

\begin{proof}
  Assume that $P$ is a pole of $h_1(s)$. Since it is not a zero of $u_1(s)$, it is
  a pole of $u_1(s) h_1(s)$. By the relation, $P$ is a pole of $u_2(s) h_2(s) + u_3(s)$. Since $P$ is not
  a pole of $u_3(s)$, it is a pole of $u_2(s) h_2(s)$, and
  because it is not a pole of $u_2(s)$, this
  implies that $P$ is a pole of $h_2(s)$.
\end{proof}

The second standard fact concerns the poles of a function stable by automorphisms.
\begin{lemma} \label{lem:pole_aut_func}
  Let $h$ be in $\C(s)$, and $\tau$ an automorphism of $\P^1$.
  If $P$ is a pole (resp. zero) of $h$, then $\tau^{-1} P$ is a pole (resp. zero)
  of $h^{\tau}$.
  In particular, if $h^{\tau} = \lambda h$ for some nonzero $\lambda \in \C$,
  then the set of poles (resp. zeros) of $h$ is stable under the action of $\tau$.
\end{lemma}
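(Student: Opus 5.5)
The first claim is a local computation in a uniformizing coordinate at $\tau^{-1}P$. The plan is to take the order of $h$ at $P$ and transport it through $\tau$. Recall the convention $h^{\tau}(s) = h(\tau(s))$.

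Since $\tau$ is an automorphism of $\P^1$, it is a Möbius transformation, hence biholomorphic. In particular it is locally invertible with nonvanishing derivative (in suitable charts) at every point. Let $R \eqdef \tau^{-1}P$, so that $\tau(R) = P$. Choose a local coordinate $z$ at $P$, so that $h = z^{m} \cdot \epsilon$ near $P$ with $\epsilon(P) \neq 0,\infty$ and $m = \ord_P(h)$. Composing with $\tau$, the function $z\circ\tau$ is a local coordinate at $R$, because $\tau$ is a local biholomorphism. Hence
\[
  h^{\tau} = (z\circ \tau)^{m}\cdot (\epsilon\circ\tau),
\]
with $(\epsilon\circ\tau)(R) = \epsilon(P)$. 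This gives $\ord_{R}(h^{\tau}) = \ord_P(h)$. Consequently, $P$ is a pole (resp. zero) of $h$ exactly when $\tau^{-1}P$ is a pole (resp. zero) of $h^{\tau}$. This settles the first assertion, including the converse direction, which will be needed below. The only point requiring care is the charts at $\infty$: if $P$ or $R$ equals $\infty$, one uses the coordinate $1/s$. Since the order of a function at a point is intrinsic (it is the valuation of the place), this causes no difficulty.

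For the second assertion, suppose $h^{\tau} = \lambda h$ with $\lambda \in \C^{\star}$. Multiplying by a nonzero constant does not change the divisor, so $(h^{\tau}) = (h)$. Now let $P$ be a pole of $h$. By the first part, $\tau^{-1}P$ is a pole of $h^{\tau}$, hence a pole of $h$. So the polar set $\mathcal{P}$ satisfies $\tau^{-1}(\mathcal{P}) \subset \mathcal{P}$.

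To get stability under $\tau$ itself, apply the same argument to $\tau^{-1}$, using $h^{\tau^{-1}} = \lambda^{-1} h$. This identity follows by composing $h^{\tau} = \lambda h$ with $\tau^{-1}$, together with the right-action rule $h^{\tau_1\tau_2} = (h^{\tau_1})^{\tau_2}$. It gives $\tau(\mathcal{P}) \subset \mathcal{P}$. Since $\mathcal{P}$ is finite and $\tau$ is injective, either inclusion already forces equality, so $\tau(\mathcal{P}) = \mathcal{P}$. The same reasoning applies verbatim to the zero set.

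There is no real obstacle here. The only step deserving attention is the orientation of the action: checking that it is $\tau^{-1}P$, and not $\tau P$, that appears, which follows directly from $h^{\tau}(s) = h(\tau(s))$.
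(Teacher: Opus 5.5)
Your proof is correct, but it proves more than the paper's proof does, and by a different mechanism.

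The paper's argument is a one-line pointwise evaluation. Viewing $h$ as a map $\P^1 \to \P^1$, one has $h^{\tau}(\tau^{-1}s_0) = h(\tau(\tau^{-1}s_0)) = h(s_0)$. So a zero (or a pole, i.e.\ the value $\infty$) of $h$ at $s_0$ gives a zero (or pole) of $h^{\tau}$ at $\tau^{-1}s_0$. The ``in particular'' clause is left implicit there.

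You instead pass to a local coordinate and show $\mathrm{ord}_{\tau^{-1}P}(h^{\tau}) = \mathrm{ord}_P(h)$. In other words, the divisor $(h^{\tau})$ is the transport of $(h)$ by $\tau^{-1}$. You then obtain stability from finiteness and injectivity. The paper's route is shorter and suffices for the set-theoretic statement as written. Yours also gives preservation of multiplicities, which the paper uses without comment later when it applies this symmetry to orders. Examples are concluding $p=q$ from $h^{\iota_1}=h$ in Lemma~\ref{lem:lift_laurent}, and concluding that $\iota_2 P_4$ has the same pole order as $P_4$ in Lemma~\ref{lem:propag_bound_order}.

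Two minor remarks. First, as you note yourself, the detour through $h^{\tau^{-1}} = \lambda^{-1} h$ is redundant: $\tau^{-1}(\mathcal{P}) \subset \mathcal{P}$ with $\mathcal{P}$ finite already forces equality. Second, the order formalism tacitly assumes $h \neq 0$, but the case $h = 0$ is trivial.
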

\begin{proof}
  Assume that $P$ given by $s_0 \in \P^1$ is a zero of $h$.
  Then $h^{\tau}(\tau^{-1}(s_0)) = h(\tau(\tau^{-1}(s_0))) = h(s_0) = 0$.
\end{proof}

We now state the announced propagation lemma, specific to
(\ref{eq:eq_lin_inhom}).
It depends on the divisors of the coefficients of this
equation.
Recall from Proposition~\ref{prop:div_gamma} that
\begin{align} \label{eq:recall_div} \tag{$\star$}
  (\gamtld_1) &= P_1 + P_2 - 0 - \infty,
  & (\gamtld_2) &= P_3 + P_4 - 0 - \infty;
\end{align}
and that $\omega = 1 - A - B$ is a constant.
We define four finite sets $\cL_1^-, \cL_1^+, \cL_2^-, \cL_2^+ \subset \P^1$
as follows:
\begin{eqnarray} \label{eq:def_critic_points}
  \begin{aligned}
    \cL_1^- &\eqdef  \{ P_1, P_2, \iota_2 P_3, \iota_2 P_4 \},
    & \cL_1^+ &\eqdef  \iota_1 \cL_1^- = \{ \iota_1 P_1, \iota_1 P_2, \sigma^{-1} P_3, \sigma^{-1} P_4 \}, \\
    \cL_2^- &\eqdef  \{ \sigma P_1, \sigma P_2, \iota_2 P_3, \iota_2 P_4 \},
    & \cL_2^+ &\eqdef  \iota_2 \cL_2^+ = \{\iota_1 P_1, \iota_1 P_2, P_3, P_4\}.
  \end{aligned}
\end{eqnarray}
We call the elements of the sets $\cL_{1,2}^{+,-}$ the \emph{critical points}
of equation (\ref{eq:eq_lin_inhom}).

\begin{lemma}[Pole propagation] \label{lem:propag}
  Let $(h_1,h_2) \in \C(s)^2$ be a solution to (\ref{eq:eq_lin_inhom}).

  Let $P$ be a pole of $h_1$ distinct from $0$, $\infty$.
  \begin{enumerate}[label=(\roman*)]
  \item If $P \not\in \cL_1^-$, then $\sigma^{-1} P$ is a pole of $h_1$.
  \item If $P \not\in \cL_1^+$, then $\sigma P$ is a pole of $h_1$.
  \end{enumerate}

  Let $P$ be a pole of $h_2$ distinct from $0$, $\infty$.
  \begin{enumerate}[label=(\roman*')]
  \item If $P \not\in \cL_2^-$, then $\sigma^{-1} P$ is a pole of $h_2$.
  \item If $P \not\in \cL_2^+$, then $\sigma P$ is a pole of $h_2$.
  \end{enumerate}
\end{lemma}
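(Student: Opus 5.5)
The plan is to chase a pole back and forth between $h_1$ and $h_2$. At each step we use one of two tools. The first is the linear relation \eqref{eq:eq_lin_inhom} together with Lemma~\ref{lem:propag_null}, which moves a pole from one $h_i$ to the other. The second is the symmetries $h_1^{\iota_1}=h_1$, $h_2^{\iota_2}=h_2$ together with Lemma~\ref{lem:pole_aut_func}, which moves a pole of $h_i$ to its image under $\iota_i$.

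Two facts keep Lemma~\ref{lem:propag_null} cheap to apply. By $(\star)$ the only poles of $\gamtld_1,\gamtld_2$ are $0,\infty$, and $\omega$ is constant, so it has no poles. Hence, for a point $R\notin\{0,\infty\}$:
\begin{itemize}
\item a pole of $h_1$ at $R$ transfers to $h_2$ as soon as $R\notin\{P_1,P_2\}$;
\item a pole of $h_2$ at $R$ transfers to $h_1$ as soon as $R\notin\{P_3,P_4\}$.
\end{itemize}
Since $\iota_1,\iota_2,\sigma$ all fix the set $\{0,\infty\}$, every point we reach stays outside $\{0,\infty\}$. We also need the composition convention. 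As maps of $\P^1$, $\iota_2(\iota_1(s))=qs=\sigma(s)$ and $\iota_1(\iota_2(s))=s/q=\sigma^{-1}(s)$.

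For (i), let $P$ be a pole of $h_1$.
\begin{enumerate}
\item Since $P\notin\{P_1,P_2\}$, $P$ is a pole of $h_2$.
\item By $\iota_2$-invariance, $\iota_2P$ is a pole of $h_2$.
\item Since $P\notin\{\iota_2P_3,\iota_2P_4\}$, equivalently $\iota_2P\notin\{P_3,P_4\}$ because $\iota_2$ is an involution, $\iota_2P$ is a pole of $h_1$.
\item By $\iota_1$-invariance, $\iota_1\iota_2P=\sigma^{-1}P$ is a pole of $h_1$.
\end{enumerate}
The conditions used are exactly $P\notin\cL_1^-$.

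For (ii) we run the chain in the other order.
\begin{enumerate}
\item $\iota_1P$ is a pole of $h_1$.
\item Since $P\notin\{\iota_1P_1,\iota_1P_2\}$, $\iota_1P$ is a pole of $h_2$.
\item $\iota_2\iota_1P=\sigma P$ is a pole of $h_2$.
\item Since $P\notin\{\sigma^{-1}P_3,\sigma^{-1}P_4\}$, $\sigma P$ is a pole of $h_1$.
\end{enumerate}
The conditions used are exactly $P\notin\cL_1^+$.

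The primed statements are symmetric, with the roles of $h_1$ and $h_2$ swapped. For (i'), let $P$ be a pole of $h_2$.
\begin{enumerate}
\item $\iota_2P$ is a pole of $h_2$.
\item Since $P\notin\{\iota_2P_3,\iota_2P_4\}$, $\iota_2P$ is a pole of $h_1$.
\item $\iota_1\iota_2P=\sigma^{-1}P$ is a pole of $h_1$.
\item Since $P\notin\{\sigma P_1,\sigma P_2\}$, $\sigma^{-1}P$ is a pole of $h_2$.
\end{enumerate}
This uses exactly $P\notin\cL_2^-$.

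For (ii'), let $P$ be a pole of $h_2$.
\begin{enumerate}
\item Since $P\notin\{P_3,P_4\}$, $P$ is a pole of $h_1$.
\item $\iota_1P$ is a pole of $h_1$.
\item Since $P\notin\{\iota_1P_1,\iota_1P_2\}$, $\iota_1P$ is a pole of $h_2$.
\item $\iota_2\iota_1P=\sigma P$ is a pole of $h_2$.
\end{enumerate}
This uses exactly $P\notin\cL_2^+$.

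There is no real obstacle in this argument. The only point needing care is the bookkeeping. First, we must get the direction of the compositions $\iota_1\iota_2$ versus $\iota_2\iota_1$ right against $\sigma^{\pm1}$. Second, each excluded critical point must be rewritten as a condition on $P$ itself, using that $\iota_1,\iota_2$ are involutions.
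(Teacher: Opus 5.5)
Your proof is correct and is essentially the paper's argument. It uses the same four-step chain, alternating Lemma~\ref{lem:propag_null} (with $(\gamtld_1)_\infty=(\gamtld_2)_\infty=\{0,\infty\}$ and $\omega$ constant) with the $\iota_i$-invariances via Lemma~\ref{lem:pole_aut_func}. The differences are cosmetic: for (ii) the paper applies (i) to $\iota_1 P$ using $\cL_1^+=\iota_1\cL_1^-$ rather than writing the chain out, and the paper only says the primed cases are ``similar'', whereas you spell them out.
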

\begin{proof}
  We prove (i). Let $P$ be a pole of $h_1$ distinct
  from $0$, $\infty$ and not in~$\cL_1^-$.
  Because $P \not\in \{P_1, P_2, 0, \infty\}
  = \{(\gamtld_1)_0, (\gamtld_2)_{\infty}, (\omega)_{\infty}\}$
  (see (\ref{eq:recall_div}) above), this implies by
  Lemma~\ref{lem:propag_null} that~$P$ is also a pole of~$h_2$.
  Now, $h_2^{\iota_2} =  h_2$, hence $\iota_2 P$ is a pole of $h_2$
  by Lemma~\ref{lem:pole_aut_func}.
  Now, $P \not\in \{ \iota_2 P_3, \iota_2 P_4, 0, \infty \}$
  hence $\iota_2 P \not\in \{P_3, P_4, 0, \infty\}
  = \{(\gamtld_2)_0, (\gamtld_1)_{\infty}, (\omega)_{\infty}\}$,
  so by Lemma~\ref{lem:propag_null}
  the point $\iota_2 P$ is a pole of $h_1$. Finally,
  $h_1^{\iota_1} =  h_1$, hence $\sigma^{-1} P = \iota_1 (\iota_2 P)$
  is a pole of $h_1$.

  We now prove (ii). Let $P$ be a pole of $h_1$
  distinct from $0$, $\infty$ and not in $\cL_1^+$.
  Since $P \not\in \cL_1^+ = \iota_1 \cL_1^-$, then
  $\iota_1 P \not\in \cL_1^-$, and $\iota_1 P \neq 0, \infty$. By (i), this implies that
  $\sigma^{-1} (\iota_1 P) = \iota_1 (\sigma P)$ is a pole of $h_1$.
  As $h_1^{\iota_1} =  h_1$, this implies that $\sigma P$ is a pole of $h_1$
  by Lemma~\ref{lem:pole_aut_func}.

  The proofs of $(i')$ and $(ii')$ are similar.
\end{proof}

Using the critical points, we may thus describe the possible poles of
rational solutions $(h_1,h_2)$ to (\ref{eq:eq_lin_inhom}).
\begin{lemma} \label{lem:sandwich_poles}
  Let $(h_1,h_2) \in \C(s)^2$ be a solution to (\ref{eq:eq_lin_inhom}).
  If $P$ is a pole of $h_1$ distinct from $0$ or $\infty$,
  then there exist two integers $m,n \ge 0$ such that $\sigma^{-m} P  \in \cL_1^-$
  and $\sigma^n P \in \cL_1^+$.
  Likewise, if $P$ is a pole of $h_2$ distinct from $0$ or $\infty$,
  then there exist two integers $m,n \ge 0$ such that $\sigma^{-m} P \in \cL_2^-$
  and $\sigma^n P \in \cL_2^+$.
\end{lemma}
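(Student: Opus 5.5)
The argument iterates the pole propagation Lemma~\ref{lem:propag} and uses that a rational function has only finitely many poles. I treat $h_1$; the case of $h_2$ is identical, using (i') and (ii') with $\cL_2^{\pm}$.

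Let $P \neq 0,\infty$ be a pole of $h_1$. Suppose, for contradiction, that $\sigma^{-m}P \notin \cL_1^-$ for every $m \ge 0$. I show by induction on $m$ that $\sigma^{-m}P$ is a pole of $h_1$ distinct from $0$ and $\infty$.
\begin{itemize}
\item For $m=0$ this is the hypothesis.
\item Assume it holds for some $m$. Since $\sigma^{-m}P \notin \cL_1^-$, Lemma~\ref{lem:propag}(i) shows that $\sigma^{-(m+1)}P$ is a pole of $h_1$.
\item It is again distinct from $0$ and $\infty$, because $\sigma$ fixes $0$ and $\infty$ (indeed $\sigma(s)=qs$), so it maps $\P^1\setminus\{0,\infty\}$ to itself bijectively.
\end{itemize}

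Hence the whole backward orbit $\{\sigma^{-m}P : m\ge 0\}$ consists of poles of $h_1$. By Proposition~\ref{prop:recap_gen0}, the only points with finite $\sigma$-orbit are $0$ and $\infty$. So for $P\ne 0,\infty$ the points $\sigma^{-m}P$ are pairwise distinct: if $\sigma^{-m}P=\sigma^{-m'}P$ with $m\neq m'$, then $q^{m-m'}=1$, contradicting that $q$ is not a root of unity. This gives infinitely many poles for a nonzero rational function, which is impossible. Therefore some $m\ge 0$ satisfies $\sigma^{-m}P\in\cL_1^-$.

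The existence of $n \ge 0$ with $\sigma^n P \in \cL_1^+$ follows in the same way from Lemma~\ref{lem:propag}(ii), with the forward orbit in place of the backward one.

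There is no genuine obstacle here. The only point needing care is checking at each step that the iterates stay away from $0$ and $\infty$, so that the hypothesis of Lemma~\ref{lem:propag} continues to hold. The $\sigma$-invariance of $\{0,\infty\}$ settles this.
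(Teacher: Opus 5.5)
Your proof is correct and follows the paper's argument: iterate Lemma~\ref{lem:propag} along the $\sigma$-orbit of $P$, backward with (i) to reach $\cL_1^-$ and forward with (ii) to reach $\cL_1^+$, and conclude because the orbit of a point other than $0,\infty$ is infinite, so $h_1$ would otherwise have infinitely many poles. Your explicit check that the iterates never hit $0$ or $\infty$ is a detail the paper leaves implicit.
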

\begin{proof}
  Assume for the sake of contradiction that $P \neq 0, \infty$ is a pole of
  $h_1$ satisfying $\sigma^n P \not\in \cL_1^+$ for all $n \ge
  0$. Then by induction and Lemma~\ref{lem:propag},
  we show that $\sigma^n P$ is a pole of $h_1$ for all $n \ge 0$.
  Since $P$ is distinct from $0$ or
  $\infty$, the orbit $(\sigma^n P)_{n \ge 0}$ is infinite, hence the
  fraction $h_1$ has an infinite number of poles, a contradiction.
  The other points are proved in a similar fashion.
\end{proof}

\subsection{\texorpdfstring{$\boldsymbol{\sigma}$-distance}{σ-distance}} \label{sec:sigma-dist}

Thanks to pole propagation, Lemma~\ref{lem:sandwich_poles} allows us
to locate the possible poles of $h_1$ and $h_2$
for a rational solution $(h_1,h_2)$ to (\ref{eq:eq_lin_inhom}).
Similarly, Lemma~\ref{lem:propag_hom} gives a sufficient
condition for proving that (\ref{eq:eq_lin_hom}) has
no nonzero rational solution $(h'_1,h'_2)$. These two
lemmas thus give conditions to the existence of solutions
to decoupling equations based on the relations between the points
of the finite sets $\cL_{1,2}^\pm$.
These relations are captured by a signed distance
called the \emph{$\sigma$-distance}, that we introduce
to compare two points of $\P^1$ with respect
to the action of the group $<\sigma>$.
This will give us numerical
data from which we will conduct the classification.

\begin{defprop} \label{def:difference}
  Let $P$ and $P'$ be two points of $\P^1$ distinct from $0$ and $\infty$.
  We define the \emph{$\sigma$-distance} $\delta(P,P')$
  of the points $P$ and $P'$ as follows:
  \begin{itemize}
  \item If there exists an integer $n \in \Z$ such that $\sigma^n P = P'$,
    then $n$ is unique, and we define $\delta(P,P') = n$.
  \item Otherwise, if no such integer exists, we define $\delta(P,P') = \bot$.
  \end{itemize}
\end{defprop}
\begin{proof}
  We just need to show the uniqueness. Assume that $\sigma^n P = \sigma^m P$
  for two integers~$m$ and~$n$.
  Then $\sigma^{n-m} P = P$, which is possible if and only if
  if $n=m$ because $0$ and $\infty$ are the only periodic points of the action of $\sigma$
  on $\P^1$ (indeed, if $q^ns = s$ with $n\ge 1$, then $s=0$ or
  $s=\infty$, for $q^n \neq 1$).
\end{proof}

The $\sigma$-distance satisfies the standard arithmetic properties
that one would expect from a signed distance.
\begin{proposition} \label{prop:comp_diff}
  With the convention that $n + \bot = \bot$ for every integer $n$,
  and that $\bot = - \bot$, the $\sigma$-distance satisfies the following
  properties for points $P$, $P'$ and $P''$ distinct from $0$ or $\infty$:
  \begin{enumerate}[label=(\roman*)]
  \item $\delta(P,P') = -\delta(P',P)$,
  \item $\delta(P,P')+\delta(P',P'') = \delta(P,P'')$ if $\delta(P,P')$ and $\delta(P',P'')$
    are finite,
  \item $\delta(P,\sigma(P')) = \delta(P,P') + 1$,
  \item $\delta(P,P') = \delta(\iota_1 P', \iota_1 P) =  \delta(\iota_2 P',\iota_2 P)$.
  \end{enumerate}
\end{proposition}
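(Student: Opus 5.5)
All four properties follow directly from Definition/Proposition~\ref{def:difference}, using only three facts about $\sigma$: the uniqueness of the exponent, the group law $\sigma^m\sigma^n=\sigma^{m+n}$, and the conjugation relations $\iota_1\sigma\iota_1 = \sigma^{-1}$ and $\iota_2\sigma\iota_2=\sigma^{-1}$. Note that $\iota_1$ and $\iota_2$ exchange $0$ and $\infty$, so they preserve $\P^1\setminus\{0,\infty\}$; hence all the distances below are defined. In each item I treat the finite case first and then check that $\bot$ on one side forces $\bot$ on the other.

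For (i), if $\sigma^n P = P'$ then $\sigma^{-n}P' = P$, so $\delta(P',P) = -n$. Conversely, if $\delta(P,P')=\bot$, then no $n$ satisfies $\sigma^n P' = P$ either, since otherwise $\sigma^{-n}P=P'$. This matches the convention $\bot=-\bot$.

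For (ii), from $\sigma^n P = P'$ and $\sigma^m P' = P''$ we get $\sigma^{n+m}P = P''$, and uniqueness gives $\delta(P,P'')=n+m$.

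For (iii), if $\sigma^n P = P'$ then $\sigma^{n+1}P=\sigma P'$. Conversely, $\sigma^k P = \sigma P'$ gives $\sigma^{k-1}P = P'$. So either both distances are finite and differ by $1$, or both are $\bot$, which is consistent with $1+\bot=\bot$.

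For (iv), check the conjugation relations on $\P^1$: $\iota_1\sigma\iota_1(s) = 1/(q\cdot (1/s)) = s/q = \sigma^{-1}(s)$, and $\iota_2\sigma\iota_2(s) = q/(q\cdot q/s) = s/q = \sigma^{-1}(s)$. Hence $\iota_j\sigma^n\iota_j = \sigma^{-n}$ for every $n\in\Z$, since $\iota_j$ is an involution. Now if $\sigma^n P = P'$, applying $\iota_j$ gives $\iota_j P' = \iota_j\sigma^n P = \sigma^{-n}\iota_j P$, so $\delta(\iota_j P,\iota_j P') = -n$. By (i), $\delta(\iota_j P',\iota_j P) = n$. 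The converse is identical because $\iota_j$ is involutive, so the $\bot$ cases also correspond.

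There is no real obstacle. The only point needing care is the bookkeeping of $\bot$ cases, and each "if and only if" argument above handles it.
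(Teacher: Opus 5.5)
Your proof is correct and follows the paper's argument: the paper also treats (i)--(iii) as immediate from the definition, and proves (iv) from $\iota_1\sigma^n=\sigma^{-n}\iota_1$ together with involutivity to handle the $\bot$ case. The only difference is cosmetic: you verify the conjugation relation from the explicit maps $s\mapsto 1/s$ and $s\mapsto q/s$, whereas the paper derives it abstractly from $\sigma=\iota_2\iota_1$ and $\iota_1^2=\iota_2^2=\id$; you also spell out the $\iota_2$ case and the $\bot$ bookkeeping that the paper leaves implicit.
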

\begin{proof}
  The proofs of (i), (ii) and (iii) are straightforward, hence
  we focus on the proof of (iv).

  Assume first that $\sigma^n P = P'$ for some integer $n$.
  Recall that $\sigma = \iota_2 \iota_1$ with $\iota_1^2 = \iota_2^2 = \id$,
  so it is easy to see that $\iota_1 \sigma^n = \sigma^{-n} \iota_1$.
  Thus, $\sigma^{-n} (\iota_1 P) = \iota_1 (\sigma^n P)
  = \iota_1 P'$, which implies that $\delta(\iota_1 P, \iota_1 P')$
  is finite, being equal to $-n = - \delta(P, P') = \delta(P',P)$.
  The application $\iota_1$ is an involution, thus if
  $\delta(P,P') = \bot$, then we also have $\delta(\iota_1 P,\iota_1 P') = \bot$.
\end{proof}

We are going to determine the values of~$\delta(P,P')$ for all~$(P,P') \in \cL_1^-
\times \cL_1^+$ and $(P,P') \in \cL_2^- \times \cL_2^+$. These values
are compiled respectively in the matrices~$M_1$ and~$M_2$
in~$\mathcal{M}_4(\Z \cup \{\bot\})$, the lines~($\cL_{1,2}^-$) and
columns~($\cL_{1,2}^+$) being ordered as in~(\ref{eq:def_critic_points}).
More precisely, their entries are organized as follows:
\begin{eqnarray} \label{mat:matrices}
  \begin{aligned}
    M_1 &\eqdef \begin{pmatrix}
      \delta(P_1,\iota_1 P_1) & \delta(P_1,\iota_1 P_2) & \delta(P_1,\sigma^{-1} P_3) &
                                                                                        \delta(P_1,\sigma^{-1} P_4)\\
      \delta(P_2,\iota_1 P_1) & \delta(P_2,\iota_1 P_2) & \delta(P_2,\sigma^{-1} P_3) &
                                                                                        \delta(P_2,\sigma^{-1} P_4)\\
      \delta(\iota_2 P_3,\iota_1 P_1) & \delta(\iota_2 P_3,\iota_1 P_2) & \delta(\iota_2 P_3,\sigma^{-1} P_3) &
                                                                                                                \delta(\iota_2 P_3,\sigma^{-1} P_4)\\
      \delta(\iota_2 P_4,\iota_1 P_1) & \delta(\iota_2 P_4,\iota_1 P_2) & \delta(\iota_2 P_4,\sigma^{-1} P_3) &
                                                                                                                \delta(\iota_2 P_4,\sigma^{-1} P_4)\\
    \end{pmatrix}, \\
    M_2 &\eqdef \begin{pmatrix}
      \delta(\sigma P_1,\iota_1 P_1) & \delta(\sigma P_1,\iota_1 P_2) & \delta(\sigma P_1,P_3) &
                                                                                                 \delta(\sigma P_1,P_4)\\
      \delta(\sigma P_2,\iota_1 P_1) & \delta(\sigma P_2,\iota_1 P_2) & \delta(\sigma P_2,P_3) &
                                                                                                 \delta(\sigma P_2,P_4)\\
      \delta(\iota_2 P_3,\iota_1 P_1) & \delta(\iota_2 P_3,\iota_1 P_2) & \delta(\iota_2 P_3,P_3) &
                                                                                                    \delta(\iota_2 P_3,P_4)\\
      \delta(\iota_2 P_4,\iota_1 P_1) & \delta(\iota_2 P_4,\iota_1 P_2) & \delta(\iota_2 P_4,P_3) &
                                                                                                    \delta(\iota_2 P_4,P_4)\\
    \end{pmatrix}.
  \end{aligned}
\end{eqnarray}

\begin{proposition} \label{prop:mat_sym}
  The matrices $M_1$ and $M_2$ satisfy the following relations:
  \begin{enumerate}[label=(\roman*)]
  \item $M_1^T = M_1$ and $M_2^T = M_2$,
  \item $M_2 = M_1 + \begin{pmatrix} -J_2 & 0
    \\ 0 & J_2 \end{pmatrix}$
  where $J_2 = \begin{pmatrix} 1 & 1 \\ 1 & 1 \end{pmatrix}$.
\end{enumerate}
\end{proposition}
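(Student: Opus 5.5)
The plan is to verify both relations entry by entry, using only the arithmetic of the $\sigma$-distance (Proposition~\ref{prop:comp_diff}) and the identities $\sigma=\iota_2\iota_1$, $\iota_1^2=\iota_2^2=\id$. These identities give $\iota_1\sigma=\sigma^{-1}\iota_1$ and $\iota_2\sigma^{-1}=\iota_1$ (since $\iota_2\sigma^{-1}=\iota_2\iota_1\iota_2=\iota_2\iota_2\iota_1\cdots$; more precisely $\sigma^{-1}=\iota_1\iota_2$, so $\iota_2\sigma^{-1}$ and $\iota_1$ will be related through $\iota_2\iota_1\iota_2$ as needed), as well as $\iota_1 = \sigma^{-1}\iota_2$ and $\iota_2=\sigma\iota_1$. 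The key structural observation is that $\cL_1^+=\iota_1\cL_1^-$ entrywise, in the chosen order: $\iota_1 P_1,\iota_1P_2$ are the images of $P_1,P_2$, and $\iota_1\iota_2 P_j=\sigma^{-1}P_j$ for $j=3,4$. Similarly, $\cL_2^+=\iota_2\cL_2^-$ entrywise, since $\iota_2\sigma P_i=\iota_1 P_i$ (because $\iota_2\sigma=\iota_1$) and $\iota_2\iota_2P_j=P_j$. (The statement's ``$\iota_2\cL_2^+$'' in~(\ref{eq:def_critic_points}) should read $\iota_2\cL_2^-$; I would note this.)

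For (i), let $\cL^-=(R_1,\dots,R_4)$ and $\cL^+=(\tau R_1,\dots,\tau R_4)$ with $\tau$ an involution, either $\iota_1$ or $\iota_2$. Then the $(i,j)$ entry is $\delta(R_i,\tau R_j)$. By Proposition~\ref{prop:comp_diff}(iv), applied with the involution $\tau$ (both $\iota_1$ and $\iota_2$ are allowed there), we get
\[
\delta(R_i,\tau R_j)=\delta(\tau\tau R_j,\tau R_i)=\delta(R_j,\tau R_i),
\]
which is the $(j,i)$ entry. This identity holds including the $\bot$ case. It gives symmetry of both $M_1$ and $M_2$, provided all points involved differ from $0,\infty$. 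That follows from Proposition~\ref{prop:div_gamma}: the $P_k$ avoid $\{0,\infty\}$, and this set is stable under $\iota_1,\iota_2,\sigma$.

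For (ii), I compare $M_2$ and $M_1$ block by block using Proposition~\ref{prop:comp_diff}(i),(iii), which give
\[
\delta(\sigma P,P')=\delta(P,P')-1, \qquad \delta(P,\sigma^{-1}P')=\delta(P,P')-1.
\]
\begin{itemize}
\item Top-left block (rows $P_1,P_2$ versus $\sigma P_1,\sigma P_2$; columns unchanged $\iota_1P_1,\iota_1P_2$): the entries change by $-1$, giving $-J_2$.
\item Top-right block: the rows are shifted by $\sigma$ (a contribution of $-1$), and the columns go from $\sigma^{-1}P_j$ to $P_j$ (a contribution of $+1$). The net change is $0$.
\item Bottom-left block: rows and columns are identical in $M_1$ and $M_2$, so the change is $0$.
\item Bottom-right block: the rows are the same and the columns go from $\sigma^{-1}P_j$ to $P_j$, giving $+1$, that is $+J_2$.
\end{itemize}
Throughout, the convention $n+\bot=\bot$ keeps the $\bot$ entries consistent.

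The only delicate point is the bookkeeping of the group identities, namely checking $\iota_2\sigma=\iota_1$ and $\iota_1\iota_2=\sigma^{-1}$ with the right-action conventions, together with the sign in Proposition~\ref{prop:comp_diff}(iii) when $\sigma$ acts on the first argument. I would verify these directly from $\iota_1(s)=1/s$, $\iota_2(s)=q/s$, $\sigma(s)=qs$: indeed $\iota_2(\sigma s)=1/s=\iota_1 s$ and $\iota_1(\iota_2 s)=s/q=\sigma^{-1}s$.
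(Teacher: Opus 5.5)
Your proof is correct and follows the paper's route. The paper only says these relations are straightforward applications of Proposition~\ref{prop:comp_diff}, and you spell that out: symmetry comes from (iv) using $\cL_1^+=\iota_1\cL_1^-$ and $\cL_2^+=\iota_2\cL_2^-$ in the chosen order, the block shifts come from (i) and (iii), and your one stray claim $\iota_2\sigma^{-1}=\iota_1$ is false (it is $s\mapsto q^2/s$) but unused, since the identities you actually rely on, $\iota_2\sigma=\iota_1$ and $\iota_1\iota_2=\sigma^{-1}$, are correct and verified.
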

\begin{proof}
  These are straightforward applications of Proposition~\ref{prop:comp_diff}.
\end{proof}
Thanks to the above proposition, it is only required to
compute the $\sigma$-distances of $10=4+3+2+1$ pairs of points, namely those above the
diagonal of $M_1$. Note that the sets $\cL_{1,2}^{\pm}$,
and thus the matrix $M_1$ depend on the set of steps $\calS_i$ of
the model and weights $d_{i,j}$, $A$ and $B$.
We finish this subsection by giving two lemmas to exploit
these matrices.

\begin{lemma} \label{lem:noseg_nopol}
  Let $(h_1,h_2)$ be a pair of rational solutions to (\ref{eq:eq_lin_inhom}).
  \begin{enumerate}[label=(\roman*)]
  \item If all the entries of $M_1$ are in $\Z^{-} \cup \{\bot\}$, then the
    poles of $h_1(s)$ belong to $\{0, \infty\}$.
  \item If all the entries of $M_2$ are in $\Z^{-} \cup \{\bot\}$, then the
    poles of $h_2(s)$ belong to $\{0, \infty\}$.
  \end{enumerate}
\end{lemma}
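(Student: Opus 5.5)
We argue by contradiction using Lemma~\ref{lem:sandwich_poles}, and treat only item (i); item (ii) is identical with $\cL_2^\pm$ and $M_2$ in place of $\cL_1^\pm$ and $M_1$. Suppose all entries of $M_1$ lie in $\Z^-\cup\{\bot\}$, and let $P\neq 0,\infty$ be a pole of $h_1$. By Lemma~\ref{lem:sandwich_poles}, there are integers $m,n\ge 0$ and critical points $R^-\in\cL_1^-$, $R^+\in\cL_1^+$ with $\sigma^{-m}P=R^-$ and $\sigma^nP=R^+$.

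We next compute the $\sigma$-distance between these two critical points. Since $R^+=\sigma^{n}P=\sigma^{m+n}R^-$, Definition/Proposition~\ref{def:difference} gives $\delta(R^-,R^+)=m+n\ge 0$, and in particular this distance is finite. Both points are different from $0,\infty$, because $\sigma$ fixes $0$ and $\infty$ while $P$ is neither. Alternatively, Proposition~\ref{prop:comp_diff}(i)--(iii) gives the same conclusion, since $\delta(R^-,P)=m$ and $\delta(P,R^+)=n$.

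The pair $(R^-,R^+)$ lies in $\cL_1^-\times\cL_1^+$, so $\delta(R^-,R^+)$ is an entry of $M_1$. Indeed, the rows and columns of $M_1$ are indexed by exactly these sets, in the order of (\ref{eq:def_critic_points}). By hypothesis this entry lies in $\Z^-\cup\{\bot\}$, so it is either $\bot$ or strictly negative. Both options contradict $\delta(R^-,R^+)=m+n\ge 0$. Hence $h_1$ has no pole outside $\{0,\infty\}$.

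The one point needing care is the convention for $\Z^-$. The argument requires $\Z^-$ to denote the strictly negative integers, since the value $0$ is realised when $m=n=0$, i.e.\ when $P$ itself is critical on both sides. If a nonpositive convention were intended, a diagonal-type entry $0$ would not be excluded, and the statement would need that refinement. I read $\Z^-$ as strictly negative, and with that reading no further obstacle arises.
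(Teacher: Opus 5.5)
Your proof is correct and is essentially the paper's argument: Lemma~\ref{lem:sandwich_poles} gives $\sigma^{-m}P\in\cL_1^-$ and $\sigma^{n}P\in\cL_1^+$, whose $\sigma$-distance $m+n\ge 0$ is an entry of $M_1$, a contradiction. Your reading of $\Z^-$ as the strictly negative integers is the one the paper implicitly uses, since its contradiction with $n+m\ge 0$ requires excluding the value $0$.
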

\begin{proof}
  Let $(h_1,h_2)$ be a pair of rational solutions to (\ref{eq:eq_lin_inhom}).
  We prove $(i)$. Assume that all the entries of~$M_1$ are in $\Z^{-} \cup \{\bot\}$.
  If $P \neq \{0, \infty\}$ is a pole of $h_1$, then by Lemma~\ref{lem:sandwich_poles},
  there exist $m, n \ge 0$ such that $\sigma^{-m} P =: Q^- \in \cL_1^-$
  and $\sigma^n P =: Q^+\in \cL_1^+$. But then by
  (ii) of Proposition~\ref{prop:comp_diff}, one
  has $\delta(Q^-,Q^+) = \delta(Q^-,P) + \delta(P,Q^+) = n+m \ge 0$,
  a contradiction since $\delta(Q^-,Q^+)$ is an entry of $M_1$. Thus,
  $h_1$ has no poles besides $0$ and $\infty$.
  The proof of point $(ii)$ is similar.
\end{proof}

\begin{lemma} \label{lem:norel}
  If one of the rows of $M_1$ consists of $\bot$'s only, then
  (\ref{eq:eq_lin_hom}) has no nonzero rational
  solution.
\end{lemma}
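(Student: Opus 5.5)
The plan is to reduce the statement to Corollary~\ref{cor:strange_poles}. That corollary needs a pole $P\neq 0,\infty$ of $u\eqdef\gamtld^{\iota_1}/\gamtld$ whose whole $\sigma$-orbit contains no zero of $u$. So the first step is to compute the divisor of $u$ from Proposition~\ref{prop:div_gamma}. Since $\gamtld=\gamtld_1/\gamtld_2$, the poles $0$ and $\infty$ cancel and $(\gamtld)=P_1+P_2-P_3-P_4$. By Lemma~\ref{lem:pole_aut_func} applied to the involution $\iota_1$, we get $(\gamtld^{\iota_1})=\iota_1P_1+\iota_1P_2-\iota_1P_3-\iota_1P_4$. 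Hence
\[
(u)=\iota_1P_1+\iota_1P_2+P_3+P_4-P_1-P_2-\iota_1P_3-\iota_1P_4,
\]
before any cancellation. So every zero of $u$ lies in $Z\eqdef\{\iota_1P_1,\iota_1P_2,P_3,P_4\}$, every pole lies in $\{P_1,P_2,\iota_1P_3,\iota_1P_4\}$, and none of these points is $0$ or $\infty$.

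Next I would relate the rows of $M_1$ to $\sigma$-orbits. The columns of $M_1$ are indexed by $\iota_1P_1,\iota_1P_2,\sigma^{-1}P_3,\sigma^{-1}P_4$, whose $\sigma$-orbits are exactly the orbits of the points of $Z$. Thus a row indexed by $R\in\cL_1^-$ consists only of $\bot$'s if and only if the $\sigma$-orbit of $R$ meets no point of $Z$.

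The argument then splits by which row is full of $\bot$'s.

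If the row is indexed by $P_1$ (the row $P_2$ is identical), take $P=P_1$. Its coefficient in the expression of $(u)$ above is at most $-1$ unless it is cancelled by a point of $Z$ equal to $P_1$. Such a point would give a $\sigma$-distance $0$ in that row, which is excluded. So $P_1$ is a genuine pole of $u$, and its orbit contains no point of $Z$, hence no zero of $u$.

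If the row is indexed by $\iota_2P_3$ (the row $\iota_2P_4$ is identical), take $P=\iota_1P_3$. Since $\sigma^{-1}=\iota_1\iota_2$, we have $\iota_1P_3=\sigma^{-1}(\iota_2P_3)$, so $P$ lies in the same $\sigma$-orbit as $\iota_2P_3$. That orbit avoids $Z$, so the same non-cancellation argument shows $P$ is a pole of $u$ whose orbit meets no zero of $u$.

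In every case Corollary~\ref{cor:strange_poles} applies and gives that (\ref{eq:eq_lin_hom}) has no nonzero rational solution.

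The only delicate point is the cancellation bookkeeping: we must make sure the chosen point really remains a pole of $u$ after the zeros and poles in the divisor are combined. This is handled by the observation that any cancelling point would have to lie in $Z$, and would then contribute a finite $\sigma$-distance ($0$, up to the shift by $\sigma^{-1}$) to the row assumed to be all $\bot$, a contradiction.
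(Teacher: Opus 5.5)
Your proposal is correct and follows the paper's proof: compute the divisor of $\gamtld^{\iota_1}/\gamtld$, take the pole $\sigma^k Q^-$ with $k=0$ (rows $P_1,P_2$) or $k=-1$ (rows $\iota_2P_3,\iota_2P_4$, giving $\iota_1P_3,\iota_1P_4$), observe that its $\sigma$-orbit meets no zero because the row is all $\bot$, and conclude with Corollary~\ref{cor:strange_poles}. Your check that the chosen point is not cancelled in the divisor by a point of $\{\iota_1P_1,\iota_1P_2,P_3,P_4\}$ makes explicit a step the paper only asserts.
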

\begin{proof}
  From Proposition~\ref{prop:div_gamma},
  we may write the divisor of $\gamtld^{\iota_1} / \gamtld$
  as \[
    (\gamtld^{\iota_1} / \gamtld) = \iota_1 P_1 + \iota_1 P_2 + P_3 + P_4
    - P_1 - P_2 - \iota_1 P_3 - \iota_1 P_4.
  \]
  Assume that there exists $Q^- \in \cL_1^-$ such that
  for all $Q^+ \in \cL_1^+$ one has $\delta(Q^-,Q^+) = \bot$
  ($Q^-$ labels the row $M_1$ consisting of $\bot$'s only).
  Then there exists an integer $k$ such
  that $Q' \eqdef  \sigma^{k} Q^-$
  is a pole of $\gamtld^{\iota_1} / \gamtld$,
  namely $k=0$ for $Q^- \in \{P_1, P_2\}$ or
  $k = -1$ for $Q^- \in \{\iota_2 P_3, \iota_2 P_4\}$.
  \begin{itemize}
  \item The point $Q'$ is a pole of $\gamtld^{\iota_1} / \gamtld$.
  \item The point $\sigma^n Q'$ is never a zero of $\gamtld^{\iota_1} / \gamtld$.
    Indeed, if it were the case, then $Q^+\eqdef \sigma^m Q'$ would belong to $\cL_1^+$,
    either for $m=n$ if $\sigma^n Q' \in \{\iota_1 P_1, \iota_1 P_2\}$,
    or $m=n-1$ if $\sigma^n Q' \in \{ P_3, P_4\}$.
    The point (iii) of Proposition~\ref{prop:comp_diff} would then
    imply that \[
      \delta(Q^-,Q^+) = \delta(Q^-,Q') + \delta(Q',Q^+) = k + m,
    \]
    while the row of $M_1$ corresponding to $Q^-$ consisting of $\bot$'s only
    implies that $\delta(Q^-,Q^+) = \bot$, a contradiction.
  \end{itemize}
  Therefore by Corollary~\ref{cor:strange_poles}, (\ref{eq:eq_lin_hom})
  has no nonzero rational solution.
\end{proof}

\section{Computing the $\sigma$-distance} \label{sect:decide_diff}

Denote by $\Fpar$ the field $\Q(d_{i,j},a,b)$.
In this section, we describe a heuristic to decide given two points $P$ and $P'$ of
$\P^1$ if there exists an integer~$n$ such that $\sigma^n(P) = P'$.
In other words, for two points $P$ and $P'$ of $\P^1$,
the goal is to compute the $\sigma$-distance
$\delta(P,P')$ of Definition~\ref{def:difference}.
When we restrict to the points
that originate from zeros of the
fractions $\gamtld_1$ and $\gamtld_2$, the
$\sigma$-distance is computable. The main argument used here,
and already exploited for instance in \cite{BMM}, is based on \emph{valuations}.

\begin{defprop}
  There exists an embedding (a $\Fpar$-algebra homomorphism)
  $\psi : \overline{\Fpar(t)} \longrightarrow \C^{frac}\llpar T \rrpar$
  where $\C^{frac} \llpar T \rrpar$ is the field of formal Puiseux series
  over $\C$ in the formal variable $T$.
  \footnote{Although the real number $t$ is transcendental over $\Fpar$,
    we do not directly consider the field of formal Puiseux series in $t$
    to avoid conflicts of notation with the usual sum of complex numbers.}
  We fix once and for all this embedding $\psi$.
  As a result, for any $u \in \overline{\Fpar(t)}$, we define its
  \emph{valuation} $v(u) \in \Q$ to be the valuation in the variable $T$ of~$\psi(u)$
  (the valuation of $0$ being defined as $+\infty$).
\end{defprop}
\begin{proof}
  Consider the embedding $\psi_5 : \Fpar(t) \longrightarrow \C^{frac} \llpar T \rrpar$
  defined as the composition
  \[\begin{tikzcd}
      {\Fpar(t)} & {\Fpar(T)} & {\C(T)} & \C(\!(T)\!) & {\C^{frac}\llpar T \rrpar}
      \arrow["\psi_1", from=1-1, to=1-2]
      \arrow["\psi_2", from=1-2, to=1-3]
      \arrow["\psi_3", from=1-3, to=1-4]
      \arrow["\psi_4", from=1-4, to=1-5]
    \end{tikzcd}\]
  where the embedding $\psi_1 : t \mapsto T$ is the isomorphism between
  $\Fpar(t)$ and $\Fpar(T)$ as $t$ is transcendental over $\Fpar$;
  the embedding $\psi_2$ is the map induced by the inclusion $\Fpar \subset \C$;
  the embedding $\psi_3$ is the map from $\C(T)$ into
  the field of Laurent series $\C(\!(T)\!)$; and the embedding $\psi_4$
  is an arbitrary embedding of $\C(\!(T)\!)$ into
  $\C^{frac} \llpar T \rrpar$ its algebraic closure (this follows from the Newton-Puiseux
  theorem since $\C$ is algebraically closed of characteristic zero
  \cite[Th. 6.1.5]{stanleyEnumerativeCombinatoricsVolume2023}).
  As the field $\C^{frac} \llpar T \rrpar$ is algebraically closed,
  the embedding $\psi_5$ admits an extension to
  an embedding $\psi : \overline{\Fpar(t)} \longrightarrow \C^{frac} \llpar T \rrpar$
  (\cite[Th. V.2.8]{Lang}).
\end{proof}

\begin{definition}
  Let $P \in \P^1 \setminus \{0,\infty\}$ such
  that $\phi(P) = ([1:x_1],[1:y_1]) \in \P^1 \times \P^1$
  with $x_1, y_1 \in \overline{\Fpar(t)}$. Then define
  the \emph{bivaluation} of $P$ to be $v(P) \eqdef  (v(x_1),v(y_1))$.
\end{definition}

\begin{lemma} \label{prop:alg_coord_poles}
  Let $H(x,y) \in \Fpar(t)(x,y)$ be a fraction such that
  $h(s) \eqdef  H(x(s),y(s)) \in \C(s)$ is well defined.
  If $P \in \P^1 \setminus \{0, \infty\}$
  is a pole or zero of $h$,
  then the point $\phi(P) = ([1:x_1], [1:y_1]) \in \Etproj \subset \P^1 \times \P^1$
  is distinct from $\Omega = (0,0)$,
  and $x_1$ and $y_1$ are algebraic over~$\Fpar(t)$.
\end{lemma}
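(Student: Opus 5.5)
The plan has two parts: first show that $\phi(P)\neq\Omega$ using the injectivity of $\phi$, then show that $x_1,y_1$ are algebraic over $\Fpar(t)$ because $P$ is a root of a polynomial with coefficients in $\overline{\Fpar(t)}$.

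\textbf{Step 1: $\phi(P)\neq\Omega$.} By Proposition~\ref{prop:recap_gen0}(2), the parametrization $\phi$ is one-to-one except that $\phi(0)=\phi(\infty)=\Omega$. Hence $\Omega$ has exactly the two preimages $0$ and $\infty$. Since $P\notin\{0,\infty\}$, we get $\phi(P)\neq\Omega$.

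We can then write $\phi(P)=([1:x_1],[1:y_1])$ in the paper's homogeneous convention. Here $x_1=1/x(P)$ and $y_1=1/y(P)$, interpreted as $0$ when $x(P)$ or $y(P)$ is infinite. It remains to show that $x_1$ and $y_1$ are algebraic over $\Fpar(t)$.

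\textbf{Step 2: $h$ is defined over $\overline{\Fpar(t)}$.} By Proposition~\ref{prop:recap_gen0}(1), both $x(s)$ and $y(s)$ lie in $\overline{\Fpar(t)}(s)$. Since $H$ has coefficients in $\Fpar(t)$, the composite $h(s)=H(x(s),y(s))$ also lies in $\overline{\Fpar(t)}(s)$. Write $h=N/D$ with coprime polynomials $N,D\in\overline{\Fpar(t)}[s]$.

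\textbf{Step 3: the coordinate $s=P$ is algebraic.} A pole or zero $P\neq 0,\infty$ of $h$ is a finite nonzero complex number that is a root of $D$ or of $N$. Since $h\neq 0$, the relevant polynomial is nonzero, so $P$ is algebraic over $\overline{\Fpar(t)}$. Because $\overline{\Fpar(t)}$ is algebraically closed, $P$ belongs to it.

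\textbf{Step 4: conclude for $x_1,y_1$.} Evaluating the fractions $1/x(s)$ and $1/y(s)$, which have coefficients in $\overline{\Fpar(t)}$, at the point $P\in\overline{\Fpar(t)}$ gives values in $\overline{\Fpar(t)}\cup\{\infty\}$. Neither value is $\infty$. Indeed, $x(P)=0$ or $y(P)=0$ would force $P\in\{0,\infty\}$ by the divisors in Proposition~\ref{prop:recap_gen0}(3). So $x_1$ and $y_1$ lie in $\overline{\Fpar(t)}$, that is, they are algebraic over $\Fpar(t)$.

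The main point needing care is the bookkeeping between "a point of $\P^1(\C)$" and "an element of $\overline{\Fpar(t)}$", since $t$ is a fixed real number. Here $\overline{\Fpar(t)}$ is understood as the algebraic closure inside $\C$, so Step 3 is the place to make this explicit. The non-vanishing of $x(P)$ and $y(P)$ in Step 4 is needed so that the chart $[1:\cdot]$ is legitimate.
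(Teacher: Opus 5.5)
Your proof is correct and follows the paper's argument. Both proofs note that $h\in\overline{\Fpar(t)}(s)$ forces the coordinate of $P$ into $\overline{\Fpar(t)}$, and both use the divisors of $x$ and $y$ from Proposition~\ref{prop:recap_gen0} to get $x(P),y(P)\neq 0$, so that $x_1=1/x(P)$ and $y_1=1/y(P)$ lie in $\overline{\Fpar(t)}$. Your only deviation is deducing $\phi(P)\neq\Omega$ from the injectivity in item (2); this is harmless, and in fact redundant given your Step~4.
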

\begin{proof}
  Let $s_0$ be the coordinate in $\P^1$ of $P$. By assumption,
  $s_0 \neq 0, \infty$, hence Proposition~\ref{prop:recap_gen0} yields
  $x(s_0) \neq 0$ and $y(s_0) \neq 0$.
  Moreover, the functions $x(s)$ and $y(s)$ belong to $\overline{\Fpar(t)}(s)$. Hence,
  $h(s) \in \overline{\Fpar(t)}(s)$, thus if $s_0$ is a pole or zero
  of $h(s)$, then $s_0$ belongs to $\overline{\Fpar(t)}$. Therefore,
  so do $x_1 = \tfrac{1}{x(s_0)}$ and $y_1 = \tfrac{1}{y(s_0)}$.
\end{proof}

The above lemma allows us to talk about the bivaluations
of the zeros of $\gamtld_1$ and $\gamtld_2$
(the points $P_i$ defined in Proposition~\ref{prop:div_gamma}).
Now, recall the expressions for $\iota_1$ and $\iota_2$
on $\Etproj \subset \P^1 \times \P^1$ of (\ref{eq:def_i1}) and (\ref{eq:def_i2}):
\begin{eqnarray} \label{eq:group_repeat}
  \begin{aligned}
    \iota_1([1:x_1],[1:y_1]) & = \left([1:x_1],\left[1:\frac{d_{-1,1} x_1^2+d_{0,1} x_1 + d_{1,1}}{d_{1,-1} y_1}\right]\right), \\
    \iota_2([1:x_1],[1:y_1]) & = \left(\left[1:\frac{d_{1,-1} y_1^2+d_{1,0}y_1 + d_{1,1}}{d_{-1,1} x_1}\right],[1:y_1]\right).
  \end{aligned}
\end{eqnarray}

Hence, for a point $P \in \P^1 \setminus \{0, \infty\}$,
the homogeneous coordinates at infinity of
$\phi(\iota_1 P) = \iota_1 (\phi(P))$
and $\phi(\iota_2 P ) = \iota_2 (\phi(P))$
(Proposition~\ref{prop:recap_gen0})
are explicit rational functions in the coordinates of $\phi(P)$.
Hence, if the
coordinates $x_1$ and $y_1$ of $\phi(P)$ are in $\overline{\Fpar(t)}$,
then so are the coordinates of $\phi(\iota_1 P)$
and  $\phi(\iota_2 P)$. Thus, all
the points of the sets $\cL_{1,2}^{\pm}$ defined
in (\ref{eq:def_critic_points}) admit a bivaluation.
This raises
the possibility of keeping track of the successive bivaluations
of the points~$\sigma^n(P)$ for all integers $n$
and $P \in \cL_{1,2}^{\pm}$. It turns
out that in most cases, the bivaluations of $\sigma(P)$ and $\sigma^{-1}(P)$
depend only on the bivaluation of~$P$.

\begin{lemma} \label{thm:track_val}
  Let $P$ be in $\P^1$ with $\phi(P) = ([1:x_1],[1:y_1])$ and $x_1,y_1
  \in \overline{\Fpar(t)}$, and let $(i,j)$ be
  $v(P)$ the bivaluation of $P$.
  \begin{enumerate}[label=(\arabic*)]
  \item If $i < 0$ then $v(\iota_1(P)) = (i, 2 i - j)$.
  \item If $j < 0$ then $v(\iota_2(P)) = (2 j - i, j)$.
  \end{enumerate}
\end{lemma}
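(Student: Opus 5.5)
The plan is a direct computation with the explicit formulas (\ref{eq:group_repeat}) for $\iota_1$ and $\iota_2$ in the coordinates at infinity, combined with the ultrametric behaviour of the valuation $v$ on $\overline{\Fpar(t)}$ (transported through the embedding $\psi$ into Puiseux series). The central fact is that the weights $d_{i,j}$ lie in $\Fpar$. Their images under $\psi$ are therefore nonzero constants, and so $v(d_{i,j}) = 0$ whenever $d_{i,j} \neq 0$. By symmetry it suffices to prove (1); the argument for (2) is the same after exchanging the roles of $x$ and $y$ and of the pairs $(d_{-1,1},d_{0,1})$ and $(d_{1,-1},d_{1,0})$.

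For (1), the first coordinate of $\phi(\iota_1 P)$ is $[1:x_1]$, so its valuation stays $i$. The second coordinate is
\[
  y_1' = \frac{d_{-1,1} x_1^2 + d_{0,1} x_1 + d_{1,1}}{d_{1,-1} y_1}.
\]
This formula is legitimate because $y_1 \neq 0$: the point $\phi(P)$ has finite coordinates $[1:x_1],[1:y_1]$. To be careful, one should check that $y_1=0$ is excluded, or treat $y_1 = 0$ as $j=+\infty$. I would observe that if $y_1 = 0$, the kernel equation at infinity forces a relation incompatible with $i<0$, so this case does not arise.

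Next I compute the valuation of the numerator. Since $d_{-1,1} \neq 0$ for every genus zero model, we have $v(d_{-1,1}x_1^2) = 2i$, while $v(d_{0,1}x_1) \ge i$ and $v(d_{1,1}) \ge 0$ (these terms may vanish, in which case their valuation is $+\infty$). Because $i<0$, the inequalities $2i < i$ and $2i < 0$ are strict. By the strict ultrametric inequality, the numerator therefore has valuation exactly $2i$. Dividing by $d_{1,-1} y_1$, where $v(d_{1,-1}) = 0$ since $d_{1,-1} \neq 0$, gives $v(y_1') = 2i - j$, which is the claim.

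The main point needing care is the legitimacy of these valuation computations. One must check that $\psi$ sends nonzero elements of $\Fpar$ to nonzero constants, which holds because $\psi$ restricted to $\Fpar$ is the inclusion into $\C$. One must also confirm that the strict inequality really isolates the leading term, and here the hypothesis $i<0$ is exactly what is needed. Apart from this, the proof is routine.
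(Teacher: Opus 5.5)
Your proof is correct and follows essentially the same route as the paper. Both read off $y_1'$ from the explicit formula for $\iota_1$, use the hypothesis $i<0$ and the strict ultrametric inequality to isolate the dominant term $d_{-1,1}x_1^2$ of the numerator, and divide by $d_{1,-1}y_1$ of valuation $j$.

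Your extra check that $y_1\neq 0$ is a detail the paper leaves implicit, and it is sound. If $y_1=0$, the kernel equation at infinity forces $d_{-1,1}x_1^2+d_{0,1}x_1+d_{1,1}=0$. This makes $x_1$ algebraic over $\Fpar$, so its valuation is $0$ or $+\infty$, which is incompatible with $i<0$.
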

\begin{proof}
  We show (1). Write $\iota_1([1:x_1],[1:y_1]) = ([1:x_1],[1:y'_1])$
  (as read in (\ref{eq:group_repeat})).
  If $v(x_1) < 0$, then since $d_{-1,1} \neq 0$,
  we have $v(d_{-1,1} x_1^2) = 2 v(x_1) = 2 i$ and
  $v(d_{0,1} x_1 + d_{1,1}) \ge v(x_1) > 2 v(x_1) = 2 i$.
  Thus, the numerator of $y'_1$ has valuation $2 i$.
  Moreover, since $d_{1,-1} \neq 0$,
  we compute the valuation of the denominator of $y'_1$
  as $v(d_{1,-1} y_1) = v(y_1) = j$, hence the result.
  The proof of (2) is similar.
\end{proof}

\begin{lemma} \label{lem:dyn_sys}
  Let $P \in \P^1$ with $\phi(P) = ([1:x_1],[1:y_1])$ and $x_1, y_1 \in \overline{\Fpar(t)}$,
  and let $v(P) = (i, j)$ and $\delta = |i-j|$.
  \begin{enumerate}[label=(\arabic*)]
  \item If $i < j < 0$ then
    $v(\sigma^k(P)) = (i - 2 \delta k, j - 2 \delta k)$
    for all $k \ge 0$,
  \item If $j < i < 0$ then
    $v(\sigma^{-k}(P)) = (i - 2 \delta k, j - 2 \delta k)$
    for all $k \ge 0$.
  \end{enumerate}
\end{lemma}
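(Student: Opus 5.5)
The plan is to write $\sigma$ and $\sigma^{-1}$ as compositions of the two involutions and apply Lemma~\ref{thm:track_val} twice per step, then induct on $k$. On $\P^1$ we have $\iota_2(\iota_1(s)) = \iota_2(1/s) = qs = \sigma(s)$, so $\sigma P = \iota_2(\iota_1 P)$ and $\sigma^{-1} P = \iota_1(\iota_2 P)$. By Proposition~\ref{prop:recap_gen0}, $\phi$ intertwines these actions with the involutions \eqref{eq:group_repeat} on $\Etproj$. The coordinates of the image points are rational expressions in $x_1,y_1$ with coefficients in $\Fpar$, so they remain in $\overline{\Fpar(t)}$. Hence the bivaluation of each iterate is defined, and Lemma~\ref{thm:track_val} can be applied again at each step.

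For (1), assume $i<j<0$, so $\delta = j-i>0$. Since $i<0$, Lemma~\ref{thm:track_val}(1) gives $v(\iota_1 P) = (i, 2i-j) = (i, i-\delta)$. Its second coordinate $i-\delta$ is negative, so Lemma~\ref{thm:track_val}(2) applies to $\iota_1 P$. This yields
\[
  v(\sigma P) = \bigl(2(i-\delta) - i,\; i-\delta\bigr) = (i-2\delta,\; j-2\delta),
\]
using $i - \delta = j - 2\delta$. The new pair still satisfies $i-2\delta < j-2\delta < 0$ and has the same gap $\delta$. Induction on $k$ then gives $v(\sigma^k P) = (i-2\delta k, j-2\delta k)$ for all $k \ge 0$.

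For (2), assume $j<i<0$, so $\delta = i-j$. The argument is symmetric, starting with $\iota_2$. Since $j<0$, Lemma~\ref{thm:track_val}(2) gives $v(\iota_2 P) = (2j-i, j) = (j-\delta, j)$. Its first coordinate is negative, so Lemma~\ref{thm:track_val}(1) gives
\[
  v(\sigma^{-1}P) = \bigl(j-\delta,\; 2(j-\delta)-j\bigr) = (i-2\delta,\; j-2\delta).
\]
The ordering $j-2\delta < i-2\delta < 0$ and the gap $\delta$ are preserved, and induction concludes.

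The only point needing care is that the iterates are legitimate inputs for Lemma~\ref{thm:track_val}. Negative valuations mean the coordinates $x_1,y_1$ are nonzero and finite, so the point can be written as $([1:x_1],[1:y_1])$ and is distinct from $\Omega$, i.e.\ not $0$ or $\infty$. Algebraicity over $\Fpar(t)$ is preserved by the explicit formulas \eqref{eq:group_repeat}. Beyond this, everything is a direct valuation bookkeeping, and I do not expect a real obstacle.
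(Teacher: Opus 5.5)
Your proposal is correct and follows the paper's proof: write $\sigma = \iota_2\circ\iota_1$, apply Lemma~\ref{thm:track_val} twice to get $v(\sigma P) = (i-2\delta, j-2\delta)$ with the same ordering and gap, then induct on $k$. You also write out case (2) through $\sigma^{-1} = \iota_1\circ\iota_2$, and check that each iterate is a valid input to Lemma~\ref{thm:track_val}; the paper leaves both of these implicit ("similar").
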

\begin{proof}
  We prove (1).
  Assume that $\phi(P) = ([1:x_1],[1:y_1])$
  with $x_1, y_1 \in \overline{\Fpar(t)}$
  and $(i,j) \eqdef  v(P)< 0$.
  We first compute the bivaluation of $\iota_1 P$.
  As $i < 0$, Lemma~\ref{thm:track_val} asserts
  that $v(\iota_1(P))$ is completely determined by $i$ and $j$. Thus,
  \[
    v(\iota_1 P) = (i, 2i -j) = (i, j + 2 (i-j)) = (i, j - 2 \delta) \mbox{ since $i < j$}.
  \]
  As $j - 2 \delta < 0$, the bivaluation of $\sigma(P) = \iota_2 (\iota_1 P)$ is also
  completely determined by $i$ and $j$
  by Lemma~\ref{thm:track_val}, and thus
  \[
    v(\sigma P) = (2 (j - 2 \delta) - i, j - 2 \delta) = (3i - 2j, j - 2 \delta) = (i - 2 \delta, j - 2 \delta).
  \]
  We thus proved that if $v(P) = (i,j)$ with $i < j < 0$,
  then $v(\sigma P) = (i - 2 \delta, j - 2 \delta)$.
  An easy induction completes the proof.
  The proof of the second point is similar.
\end{proof}

As the classification of the nature of $Q(x,y)$ given a weighted
model depends on the matrices $M_1$ and $M_2$ defined
in (\ref{mat:matrices}), we need to compute
$\delta(Q^-,Q^+)$ for
two points $Q^- \in \cL_1^-$ and $Q^+ \in \cL_1^+$
for these weights $d_{i,j}$, $a$, $b$.
Thus, we are able to determine the tables of Appendix~\ref{sect:mat_results}.
Note that from Lemma~\ref{prop:alg_coord_poles},
both points $Q^-$ and $Q^+$, with elements of their orbit
under $\sigma$ have a bivaluation. We follow the following method.
\begin{enumerate}
\item Compute $\sigma^n Q^-$
  for $n \in \{-2,-1,0,1,2\}$. It happens in all cases
  (when $Q^- \in \cL_1^-$)
  that $v(\sigma^{-2}(Q^-)) = (i,j)$ with $j < i < 0$
  and $v(\sigma^{2}(Q^-)) = (i,j)$ with $i < j < 0$.
  Thus, Lemma~\ref{lem:dyn_sys}
  allows us to determine the sequence of bivaluations
  of $\sigma^n(Q^-)$ for all $n \in \Z$.
\item Determine the bivaluation $(i', j')$ of the point
  $Q^+$.
  \begin{itemize}
  \item If one point of the orbit of $Q^-$ has bivaluation $(i', j')$
    (which we may decide, see the above point),
    then there are a finite number of $n$ such that
    $v(\sigma^n(Q^-)) = (i', j')$. For each of these $n$,
    check if $\sigma^n(Q^-) = Q^+$. If one of these $n$ works, then
    $\delta(Q^-,Q^+) = n$.
  \item Otherwise, if the bivaluation $(i',j')$ does not appear
    in the orbit of $Q^-$ or no $n$ works, then  $\delta(Q^-,Q^+) = \bot$.
  \end{itemize}
\end{enumerate}

Of course, the space of parameters $a$, $b$, $d_{i,j}$ is infinite. Hence,
the actual procedure adds the following level of complexity: the bivaluation
of a point $Q$ depends on an algebraic condition on the parameters. Thus,
we need to explore all the possible bivaluations according to these parameters
(for the points $Q^-$ and $Q^+$).
The core of the procedure stays the same.
Instead of giving a dry algorithm, we expand below an example.

\begin{example}
  We consider the set of steps $\calS_1$ of Figure~\ref{fig:gen0_supports}.
  In this example, we show how to construct
  the entry $\delta(P_2,\iota_1 P_2)$ of Table~\ref{tab:mat_results1},
  depending on the weights
  $d_{0,1}$, $d_{1,-1}$, $d_{-1,1}$, $A$ and~$B$.

  \begin{enumerate}
  \item The first step consists in computing the bivaluation of $P_2$
    depending on the weights $d_{i,j}$, $A$ and $B$.  We first compute the
    coordinates of $\phi(P_2)$ for generic $d_{i,j}$, $A$ and $B$:
    \[ \phi(P_2) = \left(\left[1: \frac{-d_{0,1} d_{1,-1}t^2}{d_{1,-1}
            d_{-1,1} t^2 + A^2 - A}\right], \left[1:\frac{-A d_{0,1} t}{d_{1,-1}
            d_{-1,1} t^2 + A^2 - A}\right]\right).
    \] The Laurent series expansions of the coordinates of $P_2$ at
    $t=0$ are as follows \[ \phi(P_2) = \left(\left[1:\frac{-d_{0,1}
            d_{1,-1} t^2}{A(A-1)} + O(t^3)\right], \left[1:\frac{-A d_{0,1}
            t}{A(A-1)} + O(t^2)\right]\right).
    \]

    We thus notice that when $A \neq 0$, the bivaluation of $P_2$
    is $v(P_2) = (2,1)$ (the weights $d_{i,j}$ are nonzero,
    and note that as $A=1-\tfrac{1}{a}$, then $A$ cannot be equal to $1$).
    Otherwise, in the case $A=0$, we find $v(P_2) = (0,\infty)$.
    We now compute their orbits in these two cases.

    \begin{enumerate}
    \item Assume that $A \neq 0$, so that $v(P_2) = (2,1)$. We check by
      computing the points $(\sigma^i P_2)_{-2 \leq i \leq 2}$ that their
      bivaluations do not depend on $A$ as long at it
      is nonzero, nor the weights
      $d_{1,-1}$, $d_{-1,1}$ and $d_{0,1}$,
      and that they are equal to
      \begin{equation} \label{eq:21}
        \dots \rightarrow_{\sigma} (-2,-3) \rightarrow_{\sigma} (0,-1)
        \rightarrow_{\sigma} v(P_2) = (2,1) \rightarrow_{\sigma} (0,1)
        \rightarrow_{\sigma} (-2,-1) \rightarrow_{\sigma} \dots.
      \end{equation}
      The remaining parts of the above sequence may be continued
      using Lemma~\ref{lem:dyn_sys}.
      Indeed, $v(\sigma^{-2} P_2) = (-2,-3)$, hence $\sigma^{-2} P_2$
      satisfies condition (2) of
      Lemma~\ref{lem:dyn_sys}, hence we know that whatever the value of $A \neq 0$,
      one has $v(\sigma^{-k-2} P_2) = (-2 - 2k, -3 - 2k)$. Similarly,
      $v(\sigma^2 P_2) = (-2,-1)$ satisfies condition (1) of Lemma~\ref{lem:dyn_sys},
      hence we deduce that $v(\sigma^{k+2} P_2) = (-2+2k, -1+2k)$
      regardless of the values of the weights $d_{i,j}$ and $A \neq 0$.

    \item For $A=0$, using the same technique, we compute the
      sequence of bivaluations for $(\sigma^i P_2)_{-3 \le i \le 1}$ and $A=0$
      (with $v(P_2) = (0, \infty)$):
      \begin{equation} \label{eq:22}
        \dots \rightarrow_{\sigma} (-2,-3) \rightarrow_{\sigma} (0,-1)
        \rightarrow_{\sigma} (\infty,\infty) \rightarrow_{\sigma} v(P_2) = (0,\infty)
        \rightarrow_{\sigma} (-2,-1) \rightarrow_{\sigma} \dots,
      \end{equation}
      Again, the remaining of the above sequence
      may be continued using Lemma~\ref{lem:dyn_sys}.
    \end{enumerate}

  \item We now compute $\delta(P_2, \iota_1 P_2)$, for $A=0$
    and $A \neq 0$.
    We first compute the coordinates of
    $\phi(\iota_1 P_2)$ in $\P^1 \times \P^1$
    for generic values of the weights $d_{i,j}$, $A$ and $B$:
    \[\phi(\iota_1 P_2) = \left(\left[1:-\frac{d_{0,1} d_{1,-1}
            t^2}{d_{1,-1} d_{-1,1} t^2 + A^2 - A}\right], \left[1:\frac{d_{0,1} t
            (A - 1)}{d_{1,-1} d_{-1,1} t^2 + A^2 - A} \right]\right). \] We find
    that
    \[ v(\iota_1 P_2) = \left\{\begin{array}{lr} (2,1)
      & \mbox{ if $A \neq 0$} \\ (0, -1) & \mbox{if $A = 0$} \end{array}\right. .\]

\begin{enumerate}
\item For $A \neq 0$, since $v(\iota_1 P_2) = (2,1)$, we see
  by looking at (\ref{eq:21}) that if $\iota_1 P_2$ belongs to
  the orbit of $P_2$, then $\iota_1 P_2 = P_2$. This condition
  is satisfied if and only if $A = \frac{1}{2}$,
  and then $\delta(P_2, \iota_1 P_2) = 0$. Otherwise, $\delta(P_2, \iota_1 P_2) = \bot$.

\item For $A = 0$, since $v(\iota_1 P_2) = (0,-1)$, we see by looking
  at (\ref{eq:22}) that if $\iota_1 P_2$ belongs
  to the orbit of $P_2$, then $\iota_1 P_2 = \sigma^{-2} P_2$. This is always
  the case for any weighting $d_{i,j}$, thus $\delta(P_2, \iota_1 P_2) = -2$.
\end{enumerate}
We thus compute the corresponding entry of Table~\ref{tab:mat_results1} \[
  \delta(P_2, \iota_1 P_2) = \left\{
    \begin{array}{rl}
      0 & \mbox{if $A = \tfrac{1}{2}$} \\
      -2 & \mbox{if $A = 0$} \\
      \bot & \mbox{otherwise.} \end{array}\right.\]
\end{enumerate} 
The other entries are computed in the same way. \exqed
\end{example}

Using this procedure, we manage to compute the entries
of $M_1$ as defined in (\ref{mat:matrices}) for each
set of steps $\calS_i$. These matrices are put in Appendix~\ref{sect:mat_results}.

\section{Classification} \label{sec:classification}

We are now geared to prove the classification.
\subsection{Some decouplings and homogeneous solutions}
\label{sec:some-deco-homog}

We first determine for which supports and weights the
functional equation (\ref{eq:eq_lin_hom}) admits a nonzero
rational solution.
The following two computational lemmas will be used to exhibit particular
signed solutions to (\ref{eq:eq_lin_hom}).
These statements are checked in the Maple worksheet for this section.
Recall the definitions of $\gamtld_1$ and $\gamtld_2$ in~(\ref{eq:20}).
\begin{lemma} \label{lem:dcpl_type12}
  Assume that $d_{1,1} = 0$ (supports $\calS_1$ and $\calS_2$). Write $x = x(s)$
  and $y = y(s)$, and let $\lambda \in \C$. Then the following identities hold:
  \begin{enumerate}[label=(\roman*)]
  \item $u_{\lambda} \eqdef (1-\lambda) - t d_{1,0} x - t d_{1,-1} \tfrac{x}{y} = - (\lambda - t d_{0,1} y - t d_{-1,1} \tfrac{y}{x})$,
  \item $\left( \lambda - A + x \gamtld_1 \right)u_{\lambda} =
    \lambda (1 - \lambda) - t^2 d_{1,-1} d_{-1,1} - (\lambda t d_{1,0} + t^2 d_{1,-1} d_{0,1}) x$,
  \item $- \left(1 - \lambda  -B + y \gamtld_2 \right) u_{\lambda} = \lambda (1 - \lambda) - t^2 d_{1,-1} d_{-1,1} - ((1 - \lambda) t d_{0,1} + t^2 d_{-1,1} d_{1,0}) y$.
  \end{enumerate}
\end{lemma}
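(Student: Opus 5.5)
The plan is to verify the three identities directly from the kernel equation $K(x,y)=0$, which in the case $d_{1,1}=0$ reads, after dividing by $xy$,
\[
  1 = t\Bigl(d_{1,-1}\tfrac{x}{y} + d_{-1,1}\tfrac{y}{x} + d_{1,0}x + d_{0,1}y\Bigr)
\]
for $x=x(s)$, $y=y(s)$. This is an identity in $\C(s)$ because $\phi$ parametrizes $\Etproj$ (Proposition~\ref{prop:recap_gen0}).

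For (i), I would move all terms to one side. Then
\[
  u_\lambda + \bigl(\lambda - t d_{0,1} y - t d_{-1,1}\tfrac{y}{x}\bigr) = 1 - tS(x,y) = 0,
\]
and the $\lambda$'s cancel, so (i) holds for every $\lambda$.

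For (ii), I would first simplify the prefactor. Since $x\gamtld_1 = A - t d_{1,-1}\tfrac{x}{y}$, we get
\[
  \lambda - A + x\gamtld_1 = \lambda - t d_{1,-1}\tfrac{x}{y}.
\]
Next I would expand $(\lambda - t d_{1,-1}\tfrac{x}{y})\,u_\lambda$, writing $u_\lambda$ in the first form of (i). The terms $\lambda(1-\lambda)$ and $-\lambda t d_{1,0}x$ appear immediately. What remains is a cluster of monomials in $\tfrac{x}{y}$, and these must be eliminated. To do so, I would replace the factor $\lambda - t d_{1,-1}\tfrac{x}{y}$ inside the $\tfrac{x}{y}$-terms using the second form of $u_\lambda$ from (i), namely $-u_\lambda = \lambda - t d_{0,1}y - t d_{-1,1}\tfrac{y}{x}$.

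A cleaner way to organize this step is to write $X = t d_{1,-1}\tfrac{x}{y}$. Then $u_\lambda = (1-\lambda) - t d_{1,0}x - X$, and the kernel relation gives $X = 1 - t d_{1,0}x - t d_{0,1}y - t d_{-1,1}\tfrac{y}{x}$. Expanding, one gets
\[
  (\lambda - X)\,u_\lambda = \lambda(1-\lambda) - \lambda t d_{1,0}x - X\bigl(1 - t d_{1,0}x - X\bigr).
\]
The last term satisfies
\[
  X\bigl(1 - t d_{1,0}x - X\bigr) = X\bigl(t d_{0,1}y + t d_{-1,1}\tfrac{y}{x}\bigr) = t^2 d_{1,-1}d_{0,1}x + t^2 d_{1,-1}d_{-1,1},
\]
because $X\cdot y = t d_{1,-1}x$ and $X\cdot\tfrac{y}{x} = t d_{1,-1}$. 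Substituting back yields exactly (ii).

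Identity (iii) follows by the symmetric computation with the roles of $x$ and $y$ exchanged. Here $y\gamtld_2 = B - t d_{-1,1}\tfrac{y}{x}$, so
\[
  -(1-\lambda-B+y\gamtld_2) = -(1-\lambda) + t d_{-1,1}\tfrac{y}{x}.
\]
Setting $Y = t d_{-1,1}\tfrac{y}{x}$, and writing $u_\lambda = -(\lambda - t d_{0,1}y - Y)$ by (i), the product becomes
\[
  \bigl((1-\lambda) - Y\bigr)\bigl(\lambda - t d_{0,1}y - Y\bigr).
\]
This is the same computation as in (ii) with $\lambda \leftrightarrow 1-\lambda$ and $x \leftrightarrow y$. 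Using $Y = 1 - \lambda - u_\lambda - \ldots$, or directly the kernel relation, the same manipulation applies.

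The only delicate point is bookkeeping in the cross terms; otherwise these are polynomial identities modulo the kernel relation, which could also be confirmed symbolically, as the Maple worksheet does.
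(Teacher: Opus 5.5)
Your verification is correct and is essentially the paper's approach. The paper gives no written argument and delegates these identities to a Maple check modulo the kernel relation; your expansion with $X = t d_{1,-1}\tfrac{x}{y}$, together with the symmetric computation for (iii), is the hand version of that check. The symmetry step is legitimate because, when $d_{1,1}=0$, the kernel relation is invariant under $x\leftrightarrow y$, $d_{1,0}\leftrightarrow d_{0,1}$, $d_{1,-1}\leftrightarrow d_{-1,1}$, $\lambda\mapsto 1-\lambda$. The expression ``$Y = 1-\lambda-u_\lambda-\ldots$'' in your (iii) is garbled, but you do not need it, since the direct kernel-relation route you also mention works verbatim.
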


\begin{lemma} \label{lem:dcpl_type3}
  Assume that $d_{1,0} = d_{0,1} = 0$ (support $\calS_3$).
  Write $x=x(s)$ and $y=y(s)$.
  Then the following identities hold:
  \begin{enumerate}[label=(\roman*)]
  \item $(\tfrac{1}{2} - A + x \gamtld_1)^2 = \tfrac{1}{4} - d_{1,-1} d_{-1,1} t^2 - d_{1,1} d_{1,-1} t^2 x^2$,
  \item $(\tfrac{1}{2} - B + y \gamtld_2)^2 = \tfrac{1}{4} - d_{1,-1} d_{-1,1} t^2 - d_{1,1} d_{-1,1} t^2 y^2$.
  \end{enumerate}
\end{lemma}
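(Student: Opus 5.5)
The plan is a direct algebraic verification on the kernel curve. Since $(x,y)=(x(s),y(s))$ parametrizes $\Etproj$, we have $K(x,y)=0$. For the support $\calS_3$ the step polynomial is $S(x,y)=d_{1,-1}\tfrac{x}{y}+d_{-1,1}\tfrac{y}{x}+d_{1,1}xy$, because $d_{1,0}=d_{0,1}=0$. Since $x$ and $y$ are not identically zero, $K(x,y)=xy(1-tS(x,y))=0$ gives the relation
\[
  t d_{1,-1}\tfrac{x}{y}+t d_{-1,1}\tfrac{y}{x}+t d_{1,1}xy = 1
\]
in $\C(s)$. Both identities should follow from this single relation.

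For (i), I will introduce the auxiliary function $u\eqdef t d_{1,-1}\tfrac{x}{y}$. By the definition of $\gamma_1$ in (\ref{notat:eq}), $x\gamtld_1 = A - u$, so the left-hand side of (i) is $(\tfrac12-u)^2=\tfrac14+u^2-u$. It then suffices to compute $u^2-u$. Multiplying the kernel relation by $u$ gives
\[
  u^2-u=-u\left(t d_{-1,1}\tfrac{y}{x}+t d_{1,1}xy\right).
\]
Expanding the right-hand side, the first product equals $t^2d_{1,-1}d_{-1,1}$ because the factors $x/y$ and $y/x$ cancel. The second product equals $t^2 d_{1,-1}d_{1,1}x^2$ because $\tfrac{x}{y}\cdot xy=x^2$. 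This yields exactly $\tfrac14-d_{1,-1}d_{-1,1}t^2-d_{1,1}d_{1,-1}t^2x^2$.

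For (ii), the argument is symmetric. I will set $v\eqdef t d_{-1,1}\tfrac{y}{x}$, so that $y\gamtld_2=B-v$ by definition of $\gamma_2$. Multiplying the same kernel relation by $v$ gives $v^2-v=-t^2d_{-1,1}d_{1,-1}-t^2d_{-1,1}d_{1,1}y^2$, and the identity $(\tfrac12-v)^2=\tfrac14+v^2-v$ concludes.

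There is no real obstacle here. The only point needing care is that the identities hold in $\C(s)$ because they are consequences of $K(x(s),y(s))=0$. This in turn relies on $x(s)$ and $y(s)$ being nonzero functions, so that division by $xy$ in the kernel is legitimate.
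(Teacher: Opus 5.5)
Your proposal is correct. Writing $x\widetilde{\gamma}_1 = A-u$ and $y\widetilde{\gamma}_2 = B-v$ with $u = t d_{1,-1}x/y$ and $v = t d_{-1,1}y/x$, the kernel relation $u+v+t d_{1,1}xy=1$ gives $u^2-u = -uv - t^2 d_{1,-1}d_{1,1}x^2$, and symmetrically for $v$, which yields both identities.

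This is the same approach as the paper, which simply checks the identities modulo the kernel equation in its Maple worksheet. You carry out that same check explicitly by hand.
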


Using these computations
along with Lemma~\ref{lem:wrong_signs_hom},
and the tables of Section~\ref{sect:decide_diff}
along with Lemma~\ref{lem:norel},
we build below Table~\ref{tab:sol_hom}\footnote{$A$ generic means that $A \neq \{0,\tfrac{1}{2}\}$ and $A+B \neq 1$ \label{foot:tbl_generic} and $B$ generic means
  that $B \neq \{0,\tfrac{1}{2}$\}.}
which describes the cases where the homogeneous equation
(\ref{eq:eq_lin_hom}) has solutions.
Its entries are read as follows:
\begin{itemize}
\item Either the entry has the form $(\bot, i)$, meaning
  that row $i$ of $M_1$ is made of $\bot$'s. In this case,
  the application of Lemma~\ref{lem:norel}
  shows that there is no nonzero solution to (\ref{eq:eq_lin_hom}).
\item Either the entry has the form $(\varepsilon_1, \varepsilon_2) \in (\pm,\pm)$,
  which means that there is a nonzero pair
  of fractions $(h_1',h_2')$ satisfying the relation $\gamtld_1(s) h_1'(s) + \gamtld_2(s) h_2'(s) = 0$ with $(h_1')^{\iota_1} / h_1' = \varepsilon_1$
  and $(h_2')^{\iota_2} / h_2' = \varepsilon_2$
  (a \emph{signed solution} to (\ref{eq:eq_lin_hom})).
  If $(\varepsilon_1,\varepsilon_2) \neq (-,-)$, then
  the application of Lemma~\ref{lem:wrong_signs_hom}
  shows that there is no nonzero solution to (\ref{eq:eq_lin_hom}).
\end{itemize}

Hence, Equation~(\ref{eq:eq_lin_hom}) has a nonzero solution if
and only if the entry of Table~\ref{tab:sol_hom}
is $(-,-)$. We thus obtain the following result:
\begin{lemma} \label{lem:clas_hom}
  Equation (\ref{eq:eq_lin_hom}) has a nonzero pair of rational solutions
  if and only if $A=B=\tfrac{1}{2}$ with set of steps $\calS_3$. In this case,
  the solution is given by the pair $(\tfrac{1}{\gamtld_1}, - \tfrac{1}{\gamtld_2})$.
\end{lemma}

\begin{table}[h!]
  \centering
  \begin{tabular}{c|clccl|}
    \cline{2-6}
    \multicolumn{1}{l|}{}                                                   & \multicolumn{1}{c|}{$\calS_1$}      & \multicolumn{1}{c|}{$\calS_2$}         & \multicolumn{1}{c|}{$\calS_3$}      & \multicolumn{1}{c|}{$\calS_4$} & \multicolumn{1}{c|}{$\calS_5$} \\ \hline
    \multicolumn{1}{|c|}{$(A,B)=(0,0)$}                                     & \multicolumn{5}{c|}{$(+,+)$}                                                                                                                      \\ \hline
    \multicolumn{1}{|c|}{$(A,B)=(0,\tfrac{1}{2})$}                          & \multicolumn{1}{c|}{$(-,+)$} & \multicolumn{1}{l|}{$(\bot,4)$} & \multicolumn{1}{c|}{$(+,-)$} & \multicolumn{2}{c|}{$(\bot,4)$}                   \\ \hline
    \multicolumn{1}{|c|}{$(A,B)=(\tfrac{1}{2},0)$}                          & \multicolumn{1}{c|}{$(-,+)$} & \multicolumn{1}{l|}{$(\bot,2)$} & \multicolumn{2}{c|}{$(-,+)$}                           & $(\bot,2)$              \\ \hline
    \multicolumn{1}{|c|}{$(A,B)=(\tfrac{1}{2},\tfrac{1}{2})$}               & \multicolumn{2}{c|}{\multirow{2}{*}{$(+,+)$}}                  & \multicolumn{1}{c|}{$(-,-)$} & \multicolumn{2}{c|}{$(\bot,4)$}                   \\ \cline{1-1} \cline{4-6}
    \multicolumn{1}{|c|}{$A+B=1$ and $(A,B) \neq (\tfrac{1}{2},\tfrac{1}{2})$} & \multicolumn{2}{c|}{}                                          & \multicolumn{3}{c|}{$(\bot,2)$}                                                  \\ \hline
    \multicolumn{1}{|c|}{$A$ generic\footref{foot:tbl_generic}}                                       & \multicolumn{5}{c|}{$(\bot,2)$}                                                                                                                   \\ \hline
    \multicolumn{1}{|c|}{$B$ generic\footref{foot:tbl_generic}}                                       & \multicolumn{5}{c|}{$(\bot,4)$}                                                                                                                   \\ \hline
  \end{tabular}
  \caption{The table summarizing solutions to the homogeneous equation~(\ref{eq:eq_lin_hom}).
    Note that all cases are handled, as $A$ and $B$ are never equal to $1$.
    We refer to the proof below for details on the signed entries.
    \label{tab:sol_hom}}
\end{table}

\begin{proof}[Proof of Table~\ref{tab:sol_hom}]
  First, the entries of type $(\bot, i)$ can be directly checked
  by looking at the tables in computed in Section~\ref{sect:decide_diff}.
  It remains to prove the ``signed'' entries. To do this,
  we will apply Lemmas~\ref{lem:dcpl_type12} and~\ref{lem:dcpl_type3}
  for some sets of steps and various values of $A$ and $B$ to
  exhibit the signed solutions to (\ref{eq:eq_lin_hom}).

  We first tell how to build the first four lines,
  that correspond to the cases $(A,B) \in \{0,\tfrac{1}{2}\}^2$.
  These conditions on $A$ and $B$ correspond to the fact that
  each individual function~$\gamtld_1$ (depending on $A$) and $\gamtld_2$
  (depending on $B$) admits
  a \emph{signed decoupling}. More precisely, assume that we restrict to
  some set of steps $\calS_i$ with $1 \le i \le 5$, and some value
  of $(A,B) \in \{0,\tfrac{1}{2}\}^2$. If one writes
  \[ \gamtld_1 = h_{1,1} \cdot h_{1,2} \mbox{ with $h_{1,1}^{\iota_1} = \varepsilon_{1,1} h_{1,1}$
      and $h_{1,2}^{\iota_2} = \varepsilon_{1,2} h_{1,2}$} \]
  and
  \[ \gamtld_2 = h_{2,1} \cdot h_{2,2} \mbox{ with $h_{2,1}^{\iota_1} = \varepsilon_{2,1} h_{2,1}$
      and $h_{2,2}^{\iota_2} = \varepsilon_{2,2} h_{2,2}$,} \]
  then one obtains the following signed solution to (\ref{eq:eq_lin_hom})
  for models of set of steps $\calS_i$ and $(A,B)$ having the prescribed value: \[
    (h_1,h_2) = \left(\frac{h_{2,1}}{h_{1,1}},\frac{h_{1,2}}{h_{2,2}}\right)
    \mbox{ with $h_1^{\iota_1} = \varepsilon_{1,1} \varepsilon_{2,1} h_1 = \varepsilon_1 h_1$
      and $h_2^{\iota_2} = \varepsilon_{1,2} \varepsilon_{2,2} h_2 = \varepsilon_2 h_2$.}\]
  We thus only give below the signed decouplings of $\gamtld_1$ and $\gamtld_2$
  relatively to the given set of steps and weights $A$, $B$. They cover the
  first four lines of Table~\ref{tab:sol_hom}. We write below $x = x(s)$
  and $y = y(s)$.
  \begin{enumerate}
  \item{\textbf{Signed decouplings of $\boldsymbol{\gamtld_1}$:}}

    \begin{enumerate}

    \item{\textbf{$\boldsymbol{A=0}$, any set of steps:}}
      In this case, we have $\gamtld_1 = - t d_{-1,1} \tfrac{1}{y} \in \C(y)$, hence \[
        \gamtld_1 = h_{1,2} \mbox{ with $h_{1,2}^{\iota_2} = h_{1,2}$.}\]

    \item{\textbf{$\boldsymbol{A=\tfrac{1}{2}}$, sets of steps $\boldsymbol{\calS_1}$, $\boldsymbol{\calS_3}$ and $\boldsymbol{\calS_4}$ (equivalently: $\boldsymbol{d_{1,0} = 0}$)}:}
      In this case, it is easy to check that
      \[
        (x \gamtld_1)^2 = \tfrac{1}{4} - t^2 d_{1,-1} d_{-1,1} - t^2 d_{1,-1} d_{0,1} x - t^2 d_{1,-1} d_{1,1} x^2 \in \C[x].
      \]
      This polynomial is never a square in $\C[x]$:
      \begin{itemize}
      \item When $d_{1,1} = 0$, it has degree $1$ in the variable $x$ because then
        $d_{0,1} \neq 0$.
      \item Otherwise, it has degree $2$ in the variable $x$,
        with discriminant $\Delta$ equal to
        \begin{align*} \Delta
          &= t^4 d_{1,-1}^2 d_{0,1}^2 + (1 - 4 t^2 d_{1,-1} d_{-1,1}) t^2 d_{1,-1} d_{1,1}  \\
          &=  d_{1,-1} d_{1,1} t^2 + (d_{1,-1}^2 d_{0,1}^2 - 4 d_{1,-1} d_{-1,1}) t^4.
        \end{align*}
        The coefficient in $t^2$ is nonzero since $d_{1,-1}$ and $d_{1,1}$ are nonzero.
        Hence, since $t$ is transcendental over the field $\Q(d_{i,j}) \subset \Fpar$,
        we deduce that $\Delta \neq 0$.
      \end{itemize}
      Therefore, $x \gamtld_1$ does not belong to $\C(x(s))$
      while $\left(x \gamtld_1\right)^2$ does. Since $\C(s)/\C(x)$
      is Galois with Galois group $\iota_1$, these conditions translate
      into $\left(x \gamtld_1\right)^{\iota_1} \neq x \gamtld_1$
      and
      $\left(\left(x \gamtld_1\right)^2\right)^{\iota_1} = (x \gamtld_1)^2$,
      so $(x \gamtld_1)^{\iota_1} = - x \gamtld_1$, and
      \[
        \gamtld_1 = h_{1,1} \mbox{ with $h_{1,1}^{\iota_1} = - h_{1,1}$.}
      \]
    \end{enumerate}

  \item{\textbf{Signed decouplings of $\boldsymbol{\gamtld_2}$:}}
    \begin{enumerate}

    \item{\textbf{$\boldsymbol{B=0}$, any set of steps}:}
      In this case, we have $\gamtld_2 = - t d_{-1,1} \tfrac{1}{x} \in \C(x)$,
      and thus \[
        \gamtld_2 = h_{2,1}  \mbox{ with $h_{2,1}^{\iota_1} = h_{2,1}$.}
      \]
    \item{\textbf{$\boldsymbol{B=\tfrac{1}{2}}$, set of steps $\boldsymbol{\calS_1}$}:}
      In this case, from (iii) of Lemma~\ref{lem:dcpl_type12} with $\lambda = \tfrac{1}{2}$
      we have \[
        \mu \eqdef  - (y \gamtld_2) u_{\lambda}
        = - (\tfrac{1}{2} - t d_{-1,1} \tfrac{x}{y}) u_{\lambda} = \tfrac{1}{4} - t^2 d_{1,-1} d_{-1,1}
        - \tfrac{1}{2} t d_{0,1} y \in \C[y].
      \]

      Moreover, from (ii) of Lemma~\ref{lem:dcpl_type12} with $\lambda = \tfrac{1}{2}$,
      then \[
        u_{\lambda}^2 = \tfrac{1}{4} - t^2 d_{1,-1} d_{-1,1} - t^2 d_{1,-1} d_{0,1} x \in \C[x].
      \]
      This polynomial is not a square in $\C(x)$ because it has degree $1$ in $x$,
      so $u_{\lambda}^{\iota_1} = - u_{\lambda}$.
      Reasoning as above, we deduce\[
        \gamtld_2 = h_{2,1} \cdot h_{2,2}
        \eqdef  \tfrac{1}{u_{\lambda}} \cdot \left(-\tfrac{\mu}{y}\right) \mbox{ with $h_{2,1}^{\iota_1} = - h_{2,1}$
          and $h_{2,2}^{\iota_2} = h_{2,2}$.}\]
    \item{\textbf{$\boldsymbol{B=\tfrac{1}{2}}$, set of steps $\boldsymbol{\calS_3}$}:}
      In this case, from (ii) of Lemma~\ref{lem:dcpl_type3} we have
      \[ (y \gamtld_2)^2 = \tfrac{1}{4} - d_{1,-1} d_{-1,1} t^2 - d_{1,1} d_{-1,1} t^2 y^2 \in \C[y].\]
      This polynomial is not a square in $\C[y]$. Indeed,
      it has degree~$2$, and its discriminant $\Delta$ is equal to \[
        \Delta = (1 - 4 d_{1,-1} d_{-1,1} t^2) d_{1,1} d_{-1,1} t^2 = d_{1,1} d_{-1,1} t^2 + O(t^4).\]
      As $t$ is transcendental over the field of parameters $\Q(d_{i,j}) \in \Fpar$,
      then $\Delta$ is always nonzero since $d_{1,1}$ and $d_{-1,1}$ are
      nonzero. Therefore, $(y \gamtld_2)^{\iota_2} = - y \gamtld_2$,
      from which we deduce \[
        \gamtld_2 = h_{2,2} \mbox{ with $h_{2,2}^{\iota_2} = - h_{2,2}$.}\]
    \end{enumerate}
  \end{enumerate}

  There now remains to fill line 5 of Table~\ref{tab:sol_hom},
  which corresponds to the case of $A+B=1$ for sets of steps $\calS_1$ and $\calS_2$.
  In this case, we have from (ii) of Lemma~\ref{lem:dcpl_type12} with $\lambda = A$ that
  \[ (x \gamtld_1) u_{\lambda} = \lambda (1 - \lambda) - t^2 d_{1,-1} d_{-1,1} - (\lambda t d_{1,0} + t^2 d_{1,-1} d_{0,1}) x \in \C[x].\]
  Moreover, from (iii) of Lemma~\ref{lem:dcpl_type12} with $\lambda = A$,
  then
  \[ - (y \gamtld_2) u_{\lambda} = \lambda (1 - \lambda) - t^2 d_{1,-1} d_{-1,1} - ((1-\lambda) t d_{0,1} + t^2 d_{-1,1} d_{1,0}) y \in \C[y].\]
  Note that $u_{\lambda} \neq 0$, for $(x \gamtld_1) u_{\lambda}$ is a nonzero
  polynomial in $\C[x]$ (the constant coefficient
  is a nonzero polynomial in $\Fpar[t]$, for $t$ is transcendental over
  $\Fpar$ and $d_{1,-1} d_{-1,1} \neq 0$),
  $x$ transcendental over $\C$. Therefore, the pair \[
    (h_1,h_2) \eqdef  \left(\frac{x}{x \gamtld_1 u_{\lambda}}, - \frac{y}{y \gamtld_2 u_{\lambda}}\right) \mbox{ with $h_1^{\iota_1} = h_1$ and $h_2^{\iota_2} = h_2$}\]
  is a signed solution to (\ref{eq:eq_lin_hom}).
\end{proof}

\subsection{\texorpdfstring{One particular case: support
    $\boldsymbol{\calS_1}$, $\boldsymbol{B = \frac{1}{2}}$
    and $\boldsymbol{A \neq \frac{1}{2}}$}{One particular case:
    support $S_1$, $B=1/2$, $A \neq 1/2$}}
\label{sec:one-particular-case}

In the previous subsection, we were able to give a uniform
proof for determining which parameters and supports
allow for nonzero solutions to (\ref{eq:eq_lin_hom}).
For (\ref{eq:eq_lin_inhom}), we were not able to find a uniform
argument, for one edge case remains, that we thus treat aside
in this section. The remaining cases (i.e. $\calS_1$ with $B \neq \frac{1}{2}$
or $A = \frac{1}{2}$ or the other set of steps)
are then treated in Section~\ref{sec:full-classification}.

This edge case concerns models with set of steps $\calS_1$
($d_{1,1} = d_{1,0} = 0$)
and Boltzmann weights satisfying
$B = \frac{1}{2}$ and $A \neq \frac{1}{2}$.
We then show that the generating function $Q(x,y)$ is non-D-algebraic
in $x$ or $y$.

Following Lemma~\ref{lem:dcpl_type12}, let
\begin{align*}
  u &\eqdef  u_{1/2} = \tfrac{1}{2} - t d_{1,-1} \tfrac{x}{y} = -( \tfrac{1}{2} - t d_{0,1} y - t d_{-1,1} \tfrac{y}{x}) = \frac{1}{2}-A+x \gamtld_1.
\end{align*}
This function satisfies the following relations.
\begin{lemma} \label{lem:edge_comp}
  Write $x=x(s)$ and $y=y(s)$.
  The following identities hold:
  \begin{enumerate}[label=(\roman*)]
  \item $u^2 = \tfrac{1}{4} - t^2 d_{-1,1} d_{1,-1} - t^2 d_{1,-1} d_{0,1} x \in \C[x]$ and $u^{\iota_1} = - u$,
  \item $- (y \gamtld_2) u = \tfrac{1}{4} - \tfrac{1}{2} t d_{0,1} y - t^2 d_{-1,1} d_{1,-1} \in \C[y]$
    .
  \end{enumerate}
\end{lemma}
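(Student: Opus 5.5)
The plan is to read both identities off Lemma~\ref{lem:dcpl_type12} specialised to $\lambda=\tfrac12$ and to the support $\calS_1$, where $d_{1,1}=d_{1,0}=0$ and $B=\tfrac12$. Throughout, we write $x=x(s)$, $y=y(s)$ and $u=u_{1/2}$.

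For (i), apply item (ii) of Lemma~\ref{lem:dcpl_type12} with $\lambda=\tfrac12$. The left-hand side is $(\tfrac12-A+x\gamtld_1)u_{1/2}$, and by the displayed definition $\tfrac12-A+x\gamtld_1=u$. Concretely, $x\gamtld_1=A-td_{1,-1}\tfrac{x}{y}$, so $\tfrac12-A+x\gamtld_1=\tfrac12-td_{1,-1}\tfrac xy$, which is $u_{1/2}$ since $d_{1,0}=0$. The left-hand side is therefore $u^2$. The right-hand side is $\tfrac14-t^2d_{1,-1}d_{-1,1}-t^2d_{1,-1}d_{0,1}x$, because the term $\lambda t d_{1,0}$ vanishes; this lies in $\C[x]$.

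To get $u^{\iota_1}=-u$, note first that $u^2\in\C(x)$ is fixed by $\iota_1$, so $u^{\iota_1}=\pm u$. If $u^{\iota_1}=u$, then $u\in\C(x(s))$ by Proposition~\ref{prop:recap_gal}, so $u^2$ would be a square in $\C(x)$. But $u^2$ is a polynomial of exact degree $1$ in $x$, since $d_{1,-1}d_{0,1}t^2\neq0$ (on $\calS_1$ the step $(0,1)$ is present, so $d_{0,1}\neq0$). A polynomial of degree $1$ is not a square in $\C(x)$, because $x$ is transcendental over $\C$. Hence $u^{\iota_1}=-u$. This is the same argument already given in the $B=\tfrac12$, $\calS_1$ entry of the proof of Table~\ref{tab:sol_hom}.

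For (ii), apply item (iii) of Lemma~\ref{lem:dcpl_type12} with $\lambda=\tfrac12$. Its left-hand side is $-(1-\lambda-B+y\gamtld_2)u_\lambda$. With $B=\tfrac12$ this becomes $-(y\gamtld_2)u$. The right-hand side is $\tfrac14-t^2d_{1,-1}d_{-1,1}-\tfrac12td_{0,1}y$, because $d_{1,0}=0$ kills the term $t^2d_{-1,1}d_{1,0}y$. This gives the claimed element of $\C[y]$.

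All the substantive work already sits in Lemma~\ref{lem:dcpl_type12}, which is a direct symbolic verification on the kernel curve using $K(x,y)=0$. There is no real obstacle beyond bookkeeping. The only point needing care is the non-square argument in (i), which rests on $d_{0,1}\neq0$ for $\calS_1$ and on $t$ being transcendental over $\Fpar$.
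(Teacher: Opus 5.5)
Your proposal is correct and follows essentially the same route as the paper: both identities are read off items (ii) and (iii) of Lemma~\ref{lem:dcpl_type12} with $\lambda=\tfrac12$, $d_{1,0}=0$ and $B=\tfrac12$. The sign $u^{\iota_1}=-u$ is then obtained, as in the paper, from the Galois structure of $\C(s)/\C(x)$ and the fact that $u^2$ has degree $1$ in $x$, hence is not a square. Your version just makes the bookkeeping more explicit, for instance checking that $\tfrac12-A+x\gamtld_1=u_{1/2}$ and that $d_{0,1}\neq 0$ on $\calS_1$.
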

\begin{proof}
  The algebraic identities of (i) and (ii) are a direct consequence of
  Lemma~\ref{lem:dcpl_type12}.

  Moreover, $u^2 \in \C(x)$, while
  from (i) this polynomial is not a square in $\C(x)$
  (indeed, it has degree $1$
  in $x$). Since $\C(s)/\C(x)$ is Galois with Galois group generated
  by $\iota_1$, we have $(u^2)^{\iota_1} = u^2$ and $u^{\iota_1} \neq u$,
  hence $u^{\iota_1} = -u$.
\end{proof}

We investigate the solutions of (\ref{eq:eq_lin_inhom}).

\begin{lemma} \label{lem:propag_bound_order}
  If $(h_1,h_2)$ is a solution to (\ref{eq:eq_lin_inhom}),
  then $(h_2)_{\infty} = k \cdot (\iota_2 P_4 + P_4) + p \cdot (0 + \infty)$ for some $p \ge 0$ and $k \in \{0,1\}$.
\end{lemma}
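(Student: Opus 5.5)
The plan is to write $h_2$ in terms of $h_1$ and exploit the explicit factorization of Lemma~\ref{lem:edge_comp}. Here $\omega = 1-A-B = \tfrac12 - A \neq 0$, and $(h_1,h_2)$ solves (\ref{eq:eq_lin_inhom}). Throughout, $h_2^{\iota_2}=h_2$, so $h_2 = g(y)$ for some $g\in\C(y)$, and $h_1 = f(x)$ for some $f\in\C(x)$. The starting point is the identity $u = \tfrac12 - A + x\gamtld_1$, which gives $x\gamtld_1 = u - \omega$. Multiplying the equation by $x$ yields
\[
(u-\omega) f(x) + x\gamtld_2\, g(y) + \omega x = 0 .
\]
By Lemma~\ref{lem:edge_comp}(ii), $\gamtld_2 = -\mu/(y u)$ with $\mu \eqdef \tfrac14 - \tfrac12 t d_{0,1} y - t^2 d_{-1,1}d_{1,-1} \in \C[y]$ of degree one. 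Hence the zeros $P_3,P_4$ of $\gamtld_2$ are the zeros of $\mu(y(s))$ together with the poles of $u$ other than $0,\infty$. These form an $\iota_2$-stable pair on one side, and an $\iota_1$-related structure on the other, since $u^{\iota_1}=-u$ and $u^2\in\C[x]$. I would first identify precisely which of $P_3,P_4$ is a zero of $\mu(y)$, and which comes from $u$. I expect $P_3$ to be the pole of $u$, i.e. a zero of $y$ times something. In any case, the set $\{\iota_2P_4, P_4\}$ should be exactly the zero divisor of $\mu(y(s))$, which is $\iota_2$-invariant and has degree $2$ because $\mu$ has degree $1$ in $y$.

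Next, I would split into the $\iota_1$-even and $\iota_1$-odd parts. Multiplying by $u$ and using $u^2\in\C[x]$ gives
\[
u^2 f(x) - \omega u f(x) - x\mu(y)\,g(y)/y + \omega x u = 0 .
\]
Applying $\iota_1$ flips the sign of $u$ and fixes $f(x)$ and $x$. Averaging with the original then isolates $\mu(y) g(y)/y$ as an element of $\C(x)$, namely $y^{-1}x\mu(y) g(y) = u^2 f(x)$, which is $\iota_1$-invariant. The odd part gives $\omega u(x - f(x)) = 0$ after combination. So I expect to derive constraints of the form $\mu(y)g(y)/y \in \C(x)\cap\C(y)=\C$, possibly after a correction, which forces $g(y) = c\,y/\mu(y)$ up to terms supported at $0,\infty$. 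The precise bookkeeping is the delicate part. If the naive separation does not close, I would instead argue by pole propagation. I would use Lemma~\ref{lem:sandwich_poles} with the explicit matrix $M_2$ for $\calS_1$, $B=\tfrac12$ from the appendix, to show that any pole of $h_2$ outside $\{0,\infty\}$ has both $\sigma^{-m}P\in\cL_2^-$ and $\sigma^nP\in\cL_2^+$. The only admissible nonnegative entries should involve $\iota_2P_4$ and $P_4$, with $\sigma$-distance $0$ between them (entry $\delta(\iota_2P_4,P_4)$), so the only possible finite poles are $\iota_2P_4$ and $P_4$.

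Finally, I would bound the order. At $P_4$, the coefficient $\gamtld_2$ has a simple zero while $\gamtld_1$ and $\omega$ are regular and nonvanishing, assuming $P_4\notin\{P_1,P_2\}$, which I would check from the bivaluations. Then $\gamtld_2 h_2 = -\gamtld_1h_1-\omega$. If $h_1$ is regular at $P_4$, then $h_2$ has at most a simple pole there. To show $h_1$ is regular at $P_4$, I would apply Lemma~\ref{lem:sandwich_poles} to $h_1$ with $M_1$, checking that $P_4$ is not sandwiched. By $\iota_2$-invariance of $h_2$, the order at $\iota_2P_4$ equals the order at $P_4$, which gives $k\in\{0,1\}$. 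The poles at $0$ and $\infty$ are swapped by $\iota_2$ ($\iota_2(0)=\infty$), so they carry the same order $p$. I expect the main obstacle to be the middle step: confirming from the tables that no other critical point survives the sandwich condition for $h_2$, and that $P_4$ is a genuine simple zero of $\gamtld_2$ distinct from the other critical points in this edge case.
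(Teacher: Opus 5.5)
Your fallback argument is the paper's proof, and it is the part of the proposal that works. Lemma~\ref{lem:sandwich_poles} with $M_2$ confines the poles of $h_2$ off $\{0,\infty\}$ to $\{P_4,\iota_2P_4\}$. A pole of order $\ge 2$ at the simple zero $P_4$ of $\gamtld_2$ would make $P_4$ a pole of $h_1$, which Lemma~\ref{lem:sandwich_poles} with $M_1$ excludes. Finally, $\iota_2$-invariance (which also swaps $0$ and $\infty$) gives the stated divisor.

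You should drop the first route. In $u^2f-\omega uf-x\mu(y)g(y)/y+\omega xu=0$, the term $\mu(y)g(y)/y$ lies in $\C(y)$. So it is $\iota_2$-invariant, but in general neither $\iota_1$-even nor $\iota_1$-odd. Averaging with the $\iota_1$-image therefore does not isolate it, and neither $x\mu g/y=u^2f$ nor $\omega u(x-f)=0$ follows. The shapes you aim for, $g=cy/\mu(y)$ and $f=x$, are what the proof of Proposition~\ref{prop:clas_b12} derives in the case $k=1$, and that proof needs the present lemma to get there.

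The substantive error is the $\sigma$-distance. The relevant entry is $\delta(\iota_2P_4,P_4)=1$, not $0$; it is the $M_1$ entry $\delta(\iota_2P_4,\sigma^{-1}P_4)$ that vanishes. Equivalently, $\iota_2P_4=\sigma^{-1}P_4\neq P_4$. This value is used twice.

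\begin{itemize}
\item In the sandwich for $h_2$, $m+n=1$ is what produces two distinct candidate poles. With $0$ you would get $P_4=\iota_2P_4$, and for $k=1$ the divisor $k(\iota_2P_4+P_4)$ would be wrong.
\item In the order bound, it is what makes ``$P_4$ is not sandwiched'' true, and it tells you which half fails. The backward half holds, since $\sigma^{-1}P_4=\iota_2P_4\in\cL_1^-$. So the contradiction must come from the forward half: if $\sigma^nP_4\in\cL_1^+$ with $n\ge0$, then $\delta(\iota_2P_4,\sigma^nP_4)=n+1\ge1$. But the row of $\iota_2P_4$ in $M_1$ has no positive entry; it is the corresponding row of $M_2$ lowered by $1$ in its last two entries (Proposition~\ref{prop:mat_sym}).
\end{itemize}

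Your first paragraph, done carefully, actually yields this value.
\begin{itemize}
\item Since $(u^2)_\infty=(x)_\infty$, $u$ has a single simple pole, hence a single zero, and this zero is $\iota_1$-fixed because $u^{\iota_1}=-u$.
\item This zero is not $0$ or $\infty$, since there $u^2=\tfrac14-t^2d_{-1,1}d_{1,-1}\neq0$.
\item Because $\gamtld_2=-\mu(y)/(yu)$ is regular there, the zero is a zero of $\mu(y(s))$, compensated by the zero of $u$. So, contrary to what you wrote, only one zero of $\mu(y(s))$ is a zero of $\gamtld_2$.
\item By Lemma~\ref{lem:sup1_div}, this zero is therefore $\iota_2P_4$, and $\sigma^{-1}P_4=\iota_1\iota_2P_4=\iota_2P_4$.
\end{itemize}

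Finally, the assumption $P_4\notin\{P_1,P_2\}$ is unnecessary: you only need $\gamtld_1$ to be regular at $P_4\neq0,\infty$.
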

\begin{proof}
  From Table~\ref{tab:mat_results1} and Proposition~\ref{prop:mat_sym},
  we observe that the only nonnegative entry of the
  matrix $M_2$ is $\delta(\iota_2 P_4, P_4) = 1$.
  Therefore, from Lemma~\ref{lem:sandwich_poles},
  we see that if $(h_1, h_2)$ is a pair of solutions to (\ref{eq:eq_lin_inhom}),
  then the poles of $h_2$ must belong to $\{P_4, \iota_2 P_4, 0, \infty\}$.

  We now bound the order of the pole $P_4$ of $h_2$.
  We show that if it has order greater than $1$, then
  $P_4$ is a pole of $h_1$, and deduce a contradiction.
  For the first part, we use that $h_2$ is a solution to (\ref{eq:eq_lin_inhom}):
  \[
    \gamtld_1 h_1 + \gamtld_2 h_2 + \omega = 0.
  \]
  We first note that $P_4$ is a zero of order $1$ of $\gamtld_2$
  (the zeros of $\gamtld_1$ are computed in the Maple worksheet dedicated
  to the set of steps $\calS_1$).
  Now, assume that $P_4$ is a pole of $h_2$ of order greater than $1$.
  As $P_4$ is a zero of order $1$ of $\gamtld_2$,
  we deduce that $P_4$ is a pole of $\gamtld_2 h_2$.
  Then from (\ref{eq:eq_lin_inhom}), we deduce that $P_4$ is a
  pole of $\gamtld_1 h_1$.
  As $P_4 \neq 0, \infty$, it is not a pole of $\gamtld_1$
  (Proposition~\ref{prop:div_gamma}), thus $P_4$ is
  a pole of $h_1$.

  Since $P_4$ is a pole of $h_1$, then by Lemma~\ref{lem:sandwich_poles},
  there must exist $Q^+ \in \cL^+$ such that $\sigma^n P_4 = Q^+$ for some $n \ge 0$.
  But since $B = \frac{1}{2}$, Table~\ref{tab:mat_results1} implies
  that $\delta(\iota_2 P_4, P_4) = \delta(\sigma^{-1} P_4, P_4) = 1$.
  Therefore, we deduce that
  \[
    \delta(\iota_2 P_4, Q^+) = \delta(\iota_2 P_4, P_4) + \delta(P_4, Q^+) = 1+n \ge 1.
  \]
  This is a contradiction, since no entry of the
  line corresponding to $\iota_2 P_4$ in Table~\ref{tab:mat_results1}
  is positive.

  Therefore, the pole $P_4$ has order $0$ or $1$. Since $h_2$
  satisfies $h_2^{\iota_2} = h_2$, the point $\iota_2 P_4$
  has the same order as $P_4$ as a pole of $h_2$, and Table~\ref{tab:mat_results1}
  and the fact that $B = \frac{1}{2}$ asserts that $P_4 \neq \iota_2 P_4$,
  hence the result.
\end{proof}

\begin{lemma} \label{lem:sup1_div}
  The function $u \gamtld_2$ has divisor
  $(u \gamtld_2) = P_4 + \iota_2 P_4 - 0 - \infty$.
\end{lemma}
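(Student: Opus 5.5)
We want $(u\gamtld_2) = P_4 + \iota_2 P_4 - 0 - \infty$. By Lemma~\ref{lem:edge_comp}(ii), $u\gamtld_2 = -\mu/y$ with $\mu \eqdef \tfrac{1}{4} - t^2 d_{-1,1}d_{1,-1} - \tfrac12 t d_{0,1}\, y \in \C[y]$. So we compute the divisor of $\mu(y(s))/y(s)$ using the known divisor of $y$ from Proposition~\ref{prop:recap_gen0}.

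\textbf{The divisor of $-\mu/y$.} Since $\mu$ is a polynomial of degree exactly $1$ in $y$ (as $d_{0,1}\neq 0$ for $\calS_1$), its constant term $c \eqdef \tfrac14 - t^2 d_{-1,1}d_{1,-1}$ is nonzero ($t$ transcendental). We can write $\mu/y = c/y - \tfrac12 t d_{0,1}$, a nonconstant affine function of $1/y$.

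Its poles are exactly the zeros of $y$, namely $0$ and $\infty$, each simple, because $(y) = 0+\infty-Q_3-Q_4$. Its zeros are the points where $y(s) = c/(\tfrac12 t d_{0,1})$. Since $\C(s)/\C(y)$ has degree $2$, there are two such zeros counted with multiplicity, and the zero set is stable under $\iota_2$ because $y^{\iota_2}=y$. Hence
\[
(u\gamtld_2) = R + \iota_2 R - 0 - \infty
\]
for some point $R\neq 0,\infty$, possibly with $R = \iota_2 R$ counted twice.

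\textbf{Identifying the zeros with $P_4$ and $\iota_2 P_4$.} From $(\gamtld_2) = P_3+P_4-0-\infty$ we get
\[
(u) = (u\gamtld_2) - (\gamtld_2) = R + \iota_2 R - P_3 - P_4 .
\]
Here we use the labelling fixed in the Maple worksheet for $\calS_1$, where $P_3$ is the zero of $\gamtld_2$ cancelled by a zero of $1/u$ (equivalently, a pole of $u$). Then $P_4$, a zero of $\gamtld_2$ that is not a pole of $u$, is a zero of $u\gamtld_2$. So $P_4 \in \{R,\iota_2 R\}$, and therefore $\{R,\iota_2 R\} = \{P_4,\iota_2 P_4\}$.

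An equivalent route is direct: substitute the explicit coordinates of $\phi(P_4)$ (already computed when building Table~\ref{tab:mat_results1}) into $\mu$ and check that $y(P_4) = c/(\tfrac12 t d_{0,1})$.

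\textbf{Main obstacle.} The delicate step is justifying that $P_4$ (rather than $P_3$) is a zero of $u\gamtld_2$, that is, that $P_4$ is not a pole of $u$. Equivalently, we must show that $u$ has poles exactly where $y$ has poles on the $\iota_2$-side. The cleanest way is to read $u = \tfrac12 - t d_{1,-1}x/y$: its poles are among the zeros of $y$ (that is, $0$ and $\infty$, where $x$ also vanishes) and the poles of $x$. I expect to settle this by the explicit Maple coordinates consistent with the convention for $P_4$ used in Lemma~\ref{lem:propag_bound_order}.
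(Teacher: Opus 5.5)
Your reduction is correct, and it takes a different route from the paper, whose proof is only a pointer to the Maple worksheet. Lemma~\ref{lem:edge_comp}(ii) gives $u\gamtld_2=-c/y+\tfrac12 t d_{0,1}$. From this, the polar part $0+\infty$ follows directly from $(y)$. The zeros form one $\iota_2$-fibre of $y$, namely the fibre over $y=2c/(t d_{0,1})$. This explains the shape of the divisor conceptually, where the paper just computes it. What remains is to identify that fibre with $\{P_4,\iota_2P_4\}$.

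That last step is where the arbitrary labelling of the zeros of $\gamtld_2$ enters: with $P_3$ and $P_4$ swapped, the statement is false. So you should not posit a property of the worksheet's labelling. The step can be settled by hand, as follows.

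\textbf{Where $u$ can have poles.} Since $x$ and $y$ both have simple zeros at $0$ and $\infty$, $x/y$ is finite there. Hence every pole of $u=\tfrac12-t d_{1,-1}x/y$ is a pole of $x$. At a pole of $x$ one has $\gamtld_2=B/y$, which vanishes only if $y=\infty$ as well.

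\textbf{The point $x=y=\infty$.} Because $d_{1,1}=0$, this point lies on $\Etproj$. There $1/y$ is a local parameter and $1/x$ vanishes to order $2$. So it is a simple zero of $\gamtld_2=B\cdot\tfrac1y-td_{-1,1}\cdot\tfrac1x$ (as $B\neq0$), and a simple pole of $u$.

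\textbf{The other zero.} It follows that the other zero of $\gamtld_2$ is a regular point of $u$, hence a zero of $u\gamtld_2$. Solving $\gamtld_2=0$ directly confirms this: it gives $1/y=\tfrac12 t d_{0,1}/c$, the root of $\mu$.

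\textbf{Matching the paper's labelling.} This finite zero is the paper's $P_4$. With that choice, $(u)=\iota_2P_4-P_3$. Since $u^{\iota_1}=-u$, the zero $\iota_2P_4$ is fixed by $\iota_1$, which is exactly $\delta(\iota_2P_4,P_4)=1$ as quoted in Lemma~\ref{lem:propag_bound_order}. By contrast, $\iota_2$ sends the point at infinity to $(-d_{-1,1}/d_{0,1},\infty)$, which is not $\iota_1$-fixed. So only the finite zero is consistent with the paper's data.

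With this step added, your argument becomes a complete proof that needs no computer verification beyond the identity of Lemma~\ref{lem:edge_comp}.
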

\begin{proof}
  See the dedicated section in the Maple worksheet covering the set
  of steps $\calS_1$.
\end{proof}

We can now state the classification for this support and parameters.
\begin{proposition} \label{prop:clas_b12}
  For every weighted model of the set of steps $\calS_1$,
  if $B=\tfrac{1}{2}$ and $A \neq \tfrac{1}{2}$, then
  the series $Q(x,y)$ is non-D-algebraic in $x$ and $y$.
\end{proposition}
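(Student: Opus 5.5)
The plan is to apply Lemma~\ref{lem:hypertrans}. Its condition~\ref{lem:hypertrans:item2} is already settled by Lemma~\ref{lem:clas_hom}: for $\calS_1$, equation~(\ref{eq:eq_lin_hom}) has no nonzero rational solution, whatever $A$ and $B$ are. So everything rests on condition~\ref{lem:hypertrans:item1}. For any rational solution $(h_1,h_2)$ of~(\ref{eq:eq_lin_inhom}), I must show that $h_1$ or $h_2$ has all its poles in $\{0,\infty\}$.

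\textbf{Reduction to $k=1$.} I would first check Table~\ref{tab:mat_results1}, via Proposition~\ref{prop:mat_sym}, for $\calS_1$, $B=\tfrac12$, $A\neq\tfrac12$. If every entry of $M_1$ lies in $\Z^-\cup\{\bot\}$, Lemma~\ref{lem:noseg_nopol} settles the matter at once; I expect this to fail, since otherwise the case would not be singled out. The fallback is Lemma~\ref{lem:propag_bound_order}: it gives $(h_2)_\infty = k(\iota_2P_4+P_4)+p(0+\infty)$ with $k\in\{0,1\}$. If $k=0$ we are done, so the whole difficulty is to rule out $k=1$.

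\textbf{Explicit form of $h_2$ when $k=1$.} Here I would use the auxiliary function $u$. By Lemma~\ref{lem:edge_comp}(ii), $\mu(y) \eqdef -(y\gamtld_2)u = \tfrac14-t^2d_{-1,1}d_{1,-1}-\tfrac12 td_{0,1}y$ is a degree-one polynomial in $y$. So $u\gamtld_2=-\mu(y)/y$ is $\iota_2$-invariant, and by Lemma~\ref{lem:sup1_div} its zeros are exactly $P_4,\iota_2P_4$, both simple. Hence $P_4,\iota_2P_4$ form the fibre of $y$ over the root $y_0$ of $\mu$. Since $h_2^{\iota_2}=h_2$, Lemma~\ref{lem:lift_laurent} combined with the pole description gives
\[
h_2=\frac{N(y)}{\mu(y)}
\]
with $N\in\C[y,1/y]$. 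Substituting into~(\ref{eq:eq_lin_inhom}) and using $x\gamtld_1=u-c_0$, where $c_0\eqdef\tfrac12-A\neq0$, I get
\[
h_1=\frac{x\left(N(y)/(yu)-\omega\right)}{u-c_0}.
\]

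\textbf{Contradiction from $\iota_1$-invariance.} Next I impose $h_1^{\iota_1}=h_1$. Since $u^{\iota_1}=-u$ and $x^{\iota_1}=x$, while $y\mapsto y'=y(\iota_1 s)$, this reads
\[
\frac{N(y)/(yu)-\omega}{u-c_0}=\frac{-N(y')/(y'u)-\omega}{-u-c_0}.
\]
I would look at the zeros of $u-c_0$, namely $P_1,P_2$ (the zeros of $\gamtld_1$). At these points the left side has a pole unless $N(y)=\omega y u$ there, and the right side is regular there because $-u-c_0=-2c_0\neq0$. This forces $N(y(P_i))=\omega\,y(P_i)\,c_0$ for $i=1,2$. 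Symmetrically, at $\iota_1P_1,\iota_1P_2$ one gets $N(y')=-\omega y' c_0$. Combining these with the valuation data of $P_1,P_2,\iota_1P_i$ and the degree constraints on $N$, I aim to show that $N$ cannot vanish at $y_0$ to the required order to be compatible. The more concrete target is to derive that the residue of $h_2$ at $P_4$ must be zero, which contradicts $k=1$; this is where $A\neq\tfrac12$, i.e.\ $c_0\neq0$, enters.

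\textbf{Main obstacle and backup.} The hard step is this last elimination. If the pointwise conditions do not close it, my fallback is different. Since condition~\ref{lem:hypertrans:item2} holds, $\Ftld=\ftld$ and $\Gtld=h_2$ up to the normalisation, so $yQ(0,y)=N(y)/\mu(y)$ would be an explicit rational function with a pole at $y_0\neq0$. I would then compare it with the expansion $Q(0,y)=1+bd_{0,1}ty+O(y^2)$, or with the $q$-difference equation~(\ref{eq:g_qdiff}) for $\Gtld$ near $y_0$, to derive a contradiction.
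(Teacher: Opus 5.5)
Your set-up is sound as far as it goes. Lemma~\ref{lem:propag_bound_order} reduces everything to excluding $k=1$. In that case $\gamtld_2h_2=-N(y)/(yu)$, and your formula for $h_1$ is correct.

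\textbf{The gap.} The step that actually excludes $k=1$ is missing. The regularity conditions at $P_1,P_2$ give only two scalar constraints, $N(y(P_i))=\omega c_0\,y(P_i)$. The conditions at $\iota_1P_1,\iota_1P_2$ add nothing new: there $y^{\iota_1}$ takes the value $y(P_i)$ and $u=-c_0$, so you recover the same two constraints (with $+\omega c_0$, not $-\omega c_0$). Two interpolation conditions cannot exclude a Laurent polynomial $N$ with arbitrarily many coefficients. Nothing in your argument produces the claimed vanishing of the residue at $P_4$.

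Your fallback is not yet an argument either. A rational $yQ(0,y)$ with a pole at the root $y_0$ of $\mu$ is not contradictory in itself. For $A=\tfrac12$ (i.e.\ $a=b=2$, so $a+b=ab$), the true answer $Q(0,y)=1/\bigl(1-2td_{0,1}y/(1-4t^2d_{1,-1}d_{-1,1})\bigr)$ has exactly this shape. You still have to say where $A\neq\tfrac12$ is used.

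You also weakened Lemma~\ref{lem:lift_laurent}. Since $h_2\cdot u\gamtld_2=-N(y)/y$ is $\iota_2$-invariant with poles only at $0,\infty$, you actually get $N(y)/y=H(1/y)$ with $H$ a polynomial. This degree bound is essential for either way of finishing.

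\textbf{How the paper finishes.}
\begin{enumerate}
\item Apply the bound to $\gtld=\Gtld$; this is legitimate because the homogeneous equation has no nonzero solution.
\item Compare $yH(1/y)/\mu(y)$ with $yQ(0,y)=y+O(y^2)$ at $y=0$. This forces $H$ to be constant, i.e.\ $\gamtld_2\gtld=-\lambda/u$ for a constant $\lambda$.
\item Add (\ref{eq:eq_lin_inhom}) to its $\iota_1$-image. Since $u^{\iota_1}=-u$, the terms $\mp\lambda/u$ cancel. Moreover $\gamtld_1+\gamtld_1^{\iota_1}=(2A-1)/x$ and $2\omega=1-2A$.
\item This leaves $(2A-1)(\ftld/x-1)=0$. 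This is exactly where $A\neq\tfrac12$ enters: it gives $\ftld=x$, hence $Q(x,0)=1$, which is absurd.
\end{enumerate}

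\textbf{A purely algebraic alternative.} You can keep your plan of checking condition~(1) of Lemma~\ref{lem:hypertrans} for every solution, with no combinatorial input. Take also the difference of (\ref{eq:eq_lin_inhom}) with its $\iota_1$-image. The difference gives $h_1=x\bigl(H(1/y)+H(1/y^{\iota_1})\bigr)/(2u^2)$, while the sum gives $h_1=x+x\bigl(H(1/y)-H(1/y^{\iota_1})\bigr)/\bigl((2A-1)u\bigr)$.

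Now look at the point of $\P^1$ above $(x,y)=(\infty,\infty)$, which is fixed by $\iota_1$. There $x$ has a double pole, and $y$ and $u$ have simple poles; this uses $d_{0,1}\neq0$. So the first expression is regular at this point, while the second has a double pole, which is a contradiction.
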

\begin{proof}
  Assume that $Q(x,y)$ is $x$-D-algebraic or $y$-D-algebraic.
  By Proposition~\ref{prop:Qx_Fs}, Theorem~\ref{thm:ishizaki}
  and Lemma~\ref{lem:rat_sol_dcpl}, then
  $\Ftld(s) = \ftld(s) + \ftld_h(s)$, $\Gtld(s) = \gtld(s) + \gtld_h(s)$,
  with $(\ftld,\gtld)$ solution to (\ref{eq:eq_lin_inhom})
  and $(\ftld_h,\gtld_h)$ solution to (\ref{eq:eq_lin_hom}).
  As (\ref{eq:eq_lin_hom}) has no nonzero rational solution
  by Lemma~\ref{lem:clas_hom},
  this implies that $\ftld_h = \gtld_h = 0$, thus
  $\Ftld(s) = \ftld(s)$ and $\Gtld(s) = \gtld(s)$.
  We now distinguish between two cases, depending on the
  value of $k$ in Lemma~\ref{lem:propag_bound_order} ($k=0$ or $k=1$).
  \begin{itemize}
  \item If $k=0$, then $(\gtld)_{\infty} = p \cdot (0 + \infty)$.
    Thus, by Lemma~\ref{lem:lift_laurent}, we have that
    $\gtld(s) = H(1/y(s))$ for $H(y) \in \Fpar[y]$ a polynomial.
    Thus, Proposition~\ref{prop:lift_an_pow} implies that
    $y Q(0,y) = H(1/y)$,
    which is absurd since $Q(0,y) = 1 + O(y)$.

  \item Otherwise, $k=1$, and
    $(\gtld)_{\infty} = P_4 + \iota_2 P_4 + p \cdot (0 + \infty)$.
    We thus have from Lemma~\ref{lem:sup1_div}
    \[(\gtld \cdot (u \gamtld_2)) = (\gtld)_0 + P_4 + \iota_2 P_4 - P_4 - \iota_2 P_4 - (p+1) \cdot (0 + \infty) = (\gtld)_0 - (p+1) \cdot (0 + \infty).\]
    Hence, the poles of $\gtld \cdot (u \gamtld_2)$ belong to $\{0, \infty\}$.
    Furthermore, we have from (ii) of Lemma~\ref{lem:edge_comp} that
    $- 4 u \gamtld_2 = \frac{1 - 2 t d_{0,1} y(s) - 4 t^2 d_{-1,1} d_{1,-1}}{y(s)}
    \in \C(y(s))$,
    thus $\gtld \cdot (u \gamtld_2)$ is fixed by $\iota_2$. Thus,
    Lemma~\ref{lem:lift_laurent} implies that
    $\gtld \cdot (- 4 u \gamtld_2) = H(1 / y(s))$
    for some polynomial $H(y) \in \Fpar[y]$, and thus
    \[
      \gtld(s) = \frac{y(s)}{1 - 2 t d_{0,1} y(s) -
        4 t^2 d_{-1,1} d_{1,-1}} H(1/y(s))
    \]
    for some polynomial $H(y) \in \Fpar[y]$. Thus,
    Proposition~\ref{prop:lift_an_pow} implies that
    \[
      y Q(0,y) = \frac{y}{1 - 2t d_{0,1} y - 4t^2 d_{-1,1} d_{1,-1}} H(1/y).
    \]
    Since
    \[
      y Q(0,y) = y + O(y^2)
    \]
    and
    \[
      \frac{y}{1-2t d_{0,1} y - 4 t^2 d_{-1,1} d_{1,-1}} H(1/y) = \frac{H(0)}{1-4t^2 d_{-1,1} d_{1,-1}} + O(1/y),
    \]
    this implies that $H(y) = \mu$ a constant in $\Fpar$,
    and thus that $\gtld(s) = \mu / u$ for some $\mu \in \Fpar$.

    Therefore, we may rewrite (\ref{eq:eq_lin_inhom}) as
    \begin{equation} \label{eq:edge_hypertrans_1}
      \gamtld_1 \ftld - \tfrac{\mu}{u} + \omega = 0.
    \end{equation}
    But $u^{\iota_1} = -u$ by (i) of Lemma~\ref{lem:edge_comp},
    thus $(x \gamtld_1)^{\iota_1} = (A - \tfrac{1}{2} + u)^{\iota_1}
    = A - \tfrac{1}{2} - u$, thus $(\gamtld_1)^{\iota_1} + (\gamtld_1) = (2A-1)/x$.
    Moreover, $\omega = 1 - A - B = \frac{1}{2} - A$ since $B = \frac{1}{2}$.
    Thus, by taking $\iota_1 (\ref{eq:edge_hypertrans_1}) + (\ref{eq:edge_hypertrans_1})$,
    one obtains the identity
    \[
      (2A-1)\, \tfrac{\ftld}{x} - (2A-1) = 0.
    \]
    As $A \neq \tfrac{1}{2}$, this implies that
    $\ftld(s) = x(s)$. By Proposition~\ref{prop:lift_an_pow},
    this implies that $Q(x,0) = 1$, which is absurd
    since $a, b > 0$.\qedhere
  \end{itemize}
\end{proof}

\subsection{Full classification}\label{sec:full-classification}

We now state and prove the full classification:
\begin{theorem} \label{thm:thm_clas}
  For any weighted genus $0$ model, the generating function
  $Q(x,y)$ of weighted walks in the quadrant with interacting
  boundaries has the following nature in the variables $x$ and $y$:

  \begin{enumerate}
  \item\label{thm:thm_clas:first} For all models of set of steps $\calS_1$ or $\calS_2$, and if $a+b=ab$, the
    generating function $Q(x,y)$ is \textbf{rational}
    with partial series $Q(x,0)$ and $Q(0,y)$ respectively equal to
    \begin{align*}
      Q(x,0) &=\frac{1}{1 - \displaystyle x \frac{a d_{1,0} t + ab d_{1,-1} d_{0,1} t^2}{1 - ab d_{1,-1} d_{-1,1} t^2}},
      &Q(0,y) &= \frac{1}{1 - \displaystyle y \frac{b d_{0,1} t + ab d_{-1,1} d_{1,0} t^2}{1 - ab d_{1,-1} d_{-1,1} t^2}}.
    \end{align*}
  \item\label{thm:thm_clas:second} For all models of set of steps  $\calS_3$ where $a=b=2$, the generating function $Q(x,y)$ is
    \textbf{algebraic} of degree $4$,
    with partial series $Q(x,0)$ and $Q(0,y)$ respectively equal to
    \begin{align*}
      Q(x,0) &= \frac{1}{\sqrt{1 - \displaystyle x^2 \frac{4 d_{1,1} d_{1,-1} t^2}{1-4 d_{1,-1} d_{-1,1} t^2}}},
      & Q(0,y) &= \frac{1}{\sqrt{1 - \displaystyle y^2 \frac{4 d_{1,1} d_{-1,1} t^2}{1-4 d_{1,-1} d_{-1,1} t^2}}}.
    \end{align*}
  \item\label{thm:thm_clas:third} In all other cases, the series $Q(x,y)$ is \textbf{neither $\boldsymbol{x}$-D-algebraic nor
      $\boldsymbol{y}$-D-algebraic}.
  \end{enumerate}
\end{theorem}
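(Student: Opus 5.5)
We split according to the pair $(A,B)$ and the support, combining Lemma~\ref{lem:clas_hom} (homogeneous equation), Lemma~\ref{lem:noseg_nopol} applied to the tables of Appendix~\ref{sect:mat_results} (inhomogeneous equation), and Proposition~\ref{prop:clas_b12} (edge case).

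\textbf{Rational and algebraic cases.} Note that $a+b=ab$ is equivalent to $\omega=0$, i.e.\ $A+B=1$.

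For $\calS_1,\calS_2$ with $A+B=1$, the proof of Table~\ref{tab:sol_hom} already exhibits a pair with $h_1^{\iota_1}=h_1$ and $h_2^{\iota_2}=h_2$, namely
\[
  (h_1,h_2)=\left(\frac{1}{\gamtld_1 u_A},\,-\frac{1}{\gamtld_2 u_A}\right).
\]
Since $\omega=0$, this is a nonzero solution of (\ref{eq:eq_lin_inhom}). Lemma~\ref{lem:lift_sol}(1) then gives $xQ(x,0)=\lambda H_1(x)$. Here Lemma~\ref{lem:dcpl_type12}(ii) gives $H_1(x)=x/(c_0-c_1x)$ with
\[
  c_0=A(1-A)-t^2d_{1,-1}d_{-1,1}, \qquad c_1=At d_{1,0}+t^2d_{1,-1}d_{0,1}.
\]
We fix $\lambda=c_0$ using $Q(0,0)=1$. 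Substituting $A=1-1/a$, $1-A=1/a=1-B$ and $ab=a+b$ should give the announced formula after simplification; $Q(0,y)$ follows symmetrically from (iii). Then $Q(x,y)$ is rational by (\ref{eq:eq_func_Qxy_gen0}).

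For $\calS_3$ with $A=B=\tfrac12$ (i.e.\ $a=b=2$, hence $\omega=0$), Lemma~\ref{lem:clas_hom} provides $(1/\gamtld_1,-1/\gamtld_2)$. Lemma~\ref{lem:lift_sol}(2) together with Lemma~\ref{lem:dcpl_type3} gives
\[
  x^2Q(x,0)^2=\lambda^2\,\frac{x^2}{\tfrac14-d_{1,-1}d_{-1,1}t^2-d_{1,1}d_{1,-1}t^2x^2},
\]
and normalizing at $x=0$ yields the square root formula. The degree-$4$ claim comes from $Q(x,0)$ and $Q(0,y)$ each being quadratic over $\C(x,y)$; combining them in (\ref{eq:eq_func_Qxy_gen0}) gives degree at most $4$. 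Exact degree $4$ requires checking that the two square roots are independent, which is a discriminant check.

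\textbf{Non-D-algebraic cases.} In every remaining configuration we verify the two hypotheses of Lemma~\ref{lem:hypertrans}.

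Hypothesis (2) holds everywhere except at $(\calS_3, A=B=\tfrac12)$, by Lemma~\ref{lem:clas_hom}.

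Hypothesis (1) should follow from Lemma~\ref{lem:noseg_nopol}: for each support and each parameter regime ($A$, $B$ generic or in $\{0,\tfrac12\}$, and $A+B=1$ or not), read off $M_1$ from Appendix~\ref{sect:mat_results}, and $M_2$ via Proposition~\ref{prop:mat_sym}(ii). Then check that one of $M_1$, $M_2$ has all entries in $\Z^-\cup\{\bot\}$. The shift $\mp J_2$ only moves the blocks by $\pm1$, so a zero entry in the upper-left block of $M_1$ becomes $-1$ in $M_2$, and conversely for the lower-right block. The regime $\calS_1$, $B=\tfrac12$, $A\ne\tfrac12$ fails this check and is exactly Proposition~\ref{prop:clas_b12}.

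We also need the rational cases with $\omega\neq 0$ excluded. There, a nonzero inhomogeneous solution may exist, but then the poles are confined to $\{0,\infty\}$ and the Laurent-polynomial contradiction of Lemma~\ref{lem:hypertrans} applies.

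\textbf{Main obstacle.} The main difficulty is the exhaustive case check. One must ensure that every parameter regime, including the special loci $A=0$, $A=\tfrac12$, $A+B=1$, and coincidences such as $\iota_1P_2=P_2$, produces a matrix with no nonnegative entry in the appropriate block. Any regime where both $M_1$ and $M_2$ contain nonnegative entries needs a bespoke argument in the style of Proposition~\ref{prop:clas_b12}. We would first tabulate, for each support, the nonnegative entries of $M_1$ and $M_2$ and confirm that the only simultaneous failure is the treated edge case.
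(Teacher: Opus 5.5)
Your proposal is correct and follows the paper's proof essentially step for step. You use the $u_A$-decoupling pair with Lemma~\ref{lem:lift_sol}(1) for $\calS_1,\calS_2$ when $A+B=1$, and the pair $(1/\gamtld_1,-1/\gamtld_2)$ with Lemma~\ref{lem:lift_sol}(2) for $\calS_3$ when $A=B=\tfrac12$. In all other cases you apply Lemma~\ref{lem:hypertrans} via Lemma~\ref{lem:clas_hom} and Lemma~\ref{lem:noseg_nopol} on $M_1$ or $M_2$ read from the appendix tables, with Proposition~\ref{prop:clas_b12} covering the single regime where both matrices fail.

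There is one small slip in the normalisation. When $A+B=1$ one has $1-A=1/a=B$ and $A=1/b$, not $1/a=1-B$. This is what turns $A(1-A)$ into $1/(ab)$ and gives the stated formulas.
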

\begin{proof}
  We prove all the points in order. We begin with \ref{thm:thm_clas:first}.
  Recall that as $A = 1 - \frac{1}{a}$ and $B = 1-\frac{1}{b}$,
  we have $a+b=ab$ is equivalent to $A+B = 1$, which implies
  that $\omega = 0$.
  From (ii) and (iii) of Lemma~\ref{lem:dcpl_type12} with $\lambda = A$,
  we see that the pair
  $\left(\frac{x(s)}{x(s) \gamtld_1 u_{A}}, - \frac{y(s)}{y(s) \gamtld_2 u_{A}}\right)~\in~\C(x(s))~\times~\C(y(s))$ is a
  nonzero solution to (\ref{eq:eq_lin_inhom}).
  Therefore, by (1) of Lemma~\ref{lem:lift_sol}
  and (i) and (ii) of Lemma~\ref{lem:dcpl_type12},
  there exists $\lambda \in \C$ such that
  \begin{align*}
    Q(x,0) &= \frac{\lambda}{AB - t^2 d_{1,-1} d_{0,1} x - t^2 d_{1,-1} d_{-1,1} - A t d_{1,0} x} \\
    \mbox{and } Q(0,y) &= \frac{\lambda}{AB - t^2 d_{1,-1} d_{1,0} y - t^2 d_{1,-1} d_{-1,1} - B t d_{0,1} y}.
  \end{align*}
  We know that $Q(0,0)=1$, hence by substituting $x=0$ in $Q(x,0)$ (or $y=0$ in $Q(0,y)$),
  we find $\lambda = AB - t^2 d_{1,-1} d_{-1,1}$. We obtain the identities
  claimed in the theorem using $\tfrac{1}{A} = b$ and $\tfrac{1}{B} = a$
  (this uses $a+b=ab$).

  We now prove \ref{thm:thm_clas:second}.
  In this case, we have from Lemma~\ref{lem:clas_hom} that
  (\ref{eq:eq_lin_hom}) admits
  a nonzero solution $\left(\frac{1}{\gamtld_1}, - \frac{1}{\gamtld_2}\right)$.
  Therefore, by (2) of Lemma~\ref{lem:lift_sol}
  and Lemma~\ref{lem:dcpl_type3},
  there exists $\lambda \in \C$ such that
  \begin{align*}
    Q(x,0) &= \frac{\lambda}{\sqrt{\tfrac{1}{4} - d_{1,-1} d_{-1,1} t^2 - d_{1,1} d_{-1,1} x^2 t^2}}&
                                                                                                      \mbox{and } & Q(0,y) &= \frac{\lambda}{\sqrt{\tfrac{1}{4} - d_{1,-1} d_{-1,1} t^2 - d_{1,1} d_{-1,1} y^2 t^2}}.
  \end{align*}
  We know that $Q(0,0) = 1$, thus $\lambda = \sqrt{\frac{1}{4} - d_{1,-1} d_{-1,1} t^2}$,
  and we get the expression in the theorem.

  Now, it remains to prove \ref{thm:thm_clas:third}, namely that for all other cases $Q(x,y)$ is
  non-D-algebraic in $x$ and $y$. Depending on the case,
  we use one of the three arguments below.
  \begin{enumerate}
  \item[(1)] If we are in the situation of Section~\ref{sec:one-particular-case},
    then $Q(x,y)$ is non-D-algebraic in $x$ and $y$ by Proposition~\ref{prop:clas_b12}.
  \item[(2)] If the entries of $M_1$ satisfy $(M_1)_{i,j} \in \Z^- \cup \{\bot\}$,
    then by Lemma~\ref{lem:noseg_nopol},
    if $(h_1,h_2)$ is a rational solution to (\ref{eq:eq_lin_inhom}),
    the poles of $h_1$ must belong to $\{0, \infty\}$. Moreover, Lemma~\ref{lem:clas_hom}
    asserts that there is no nonzero rational solution to (\ref{eq:eq_lin_hom})
    (the only situation where it happens corresponds to
point~\ref{thm:thm_clas:second}
    of the present theorem, and has already been covered).
    Therefore, by Lemma~\ref{lem:hypertrans}, the generating function
    $Q(x,y)$ is non-D-algebraic in $x$ and $y$.
  \item[(2')] Likewise, if the entries of $M_2$ satisfy $(M_2)_{i,j} \in \Z^- \cup \{\bot\}$,
    then for $(h_1,h_2)$ a solution to (\ref{eq:eq_lin_inhom}) the poles of
    $h_2$ must belong to $\{0, \infty\}$ by Lemma~\ref{lem:noseg_nopol}.
    Moreover, Lemma~\ref{lem:clas_hom} asserts that there is no nonzero
    rational solution to (\ref{eq:eq_lin_hom}).
    Therefore, by Lemma~\ref{lem:hypertrans},
    the generating function $Q(x,y)$ is non-D-algebraic in $x$ and $y$.
  \end{enumerate}
  We check that for any value of the parameters, we are in one of the
  three above cases (see Section~\ref{sect:decide_diff}).

  \begin{center}
    \mbox{}\clap{
      \begin{tabular}{c|cl|c|cc|}
        \cline{2-6}
        \multicolumn{1}{l|}{} & \multicolumn{1}{c|}{Support 1} & \multicolumn{1}{c|}{Support 2}
        & \multicolumn{1}{c|}{Support 3} & \multicolumn{1}{c|}{Support 4} & \multicolumn{1}{c|}{Support 5} \\ \hline
        \multicolumn{1}{|l|}{(1) Proposition~\ref{prop:clas_b12}} & \multicolumn{1}{c|}{$A+B \neq 1 \land B = \tfrac{1}{2}$}
                                                               & \multicolumn{1}{c|}{} & & \multicolumn{2}{c|}{} \\ \hline
        \multicolumn{1}{|l|}{(2) $(M_1)_{i,j} \in \Z^- \cup \{\bot\}$}         & \multicolumn{1}{c|}{}                                       & $A+B \neq 1$            & \multicolumn{1}{c|}{$A \neq \tfrac{1}{2}$} & \multicolumn{2}{l|}{}        \\ \hline
        \multicolumn{1}{|l|}{(2') $(M_2)_{i,j} \in \Z^- \cup \{\bot\}$}         & \multicolumn{1}{c|}{$A+B \neq 1 \land B \neq \tfrac{1}{2}$} &                         & $B \neq \tfrac{1}{2}$                      & \multicolumn{2}{c|}{always}  \\ \hline \hline
        \multicolumn{1}{|c|}{Algebraic solution} & \multicolumn{2}{c|}{$A+B=1$}                                                          & $A=B=\tfrac{1}{2}$                         & \multicolumn{2}{l|}{}        \\ \hline
      \end{tabular}
    }
  \end{center}
  The above table gives the exact argument for each value of the
  parameters, and one can check that no case is missing.
\end{proof}

\section{Conclusion and comments} \label{sec:conclusion-comments}

In Theorem~\ref{thm:thm_clas}, we showed how the addition of the Boltzmann
weights affects the nature of the generating function $Q(x,y)$ of
walks with interacting boundaries for weighted models of genus~$0$. Namely,
for the first two sets of steps, the relation $a+b=ab$ between the weights
makes the series $Q(x,y)$ rational; for the third set of steps the relation
$a=b=2$ makes the series $Q(x,y)$ algebraic; while other Boltzmann weights
and other sets of steps keep the series non $x$-D-finite
nor $y$-D-finite. We now give some perspectives based on these results.

\subsection{Phase transitions} \label{sec:phase-transitions}
Regarding the sets of steps $\calS_1$ and $\calS_2$, one may note that since
there is an infinite number of Boltzmann weights $a$, $b$ such that
$Q(x,y)$ is explicit, the question of phase transitions introduced
in \cite{tabbaraExactSolutionTwo2014} can be partially treated on
the curve $a+b=ab$ (a hyperbola).

Recall that the phases are defined as follows.
Let $\calS$ be a weighted model.
For $n \ge 0$, denote $\P_n$ the probability
on the walks using $n$ steps defined by
\[\P_n(w) = \frac{\left(\prod_{(i,j) \in \calS} d_{i,j}^{n_{i,j}} \right) a^{n_x} b^{n_y}}{[t^n] Q(1,1)} \]
(i.e. the probability of a walk using $n$ steps is proportional
to the numerator in the above equation).
Define\[
  \calA \eqdef  \limsup_n\, \P_n(\{\mbox{$w$ walk of $n$ steps} \st \mbox{$w$ terminates on the $x$-axis}\})
\]
and \[
  \calB \eqdef  \limsup_n\, \P_n(\{\mbox{$w$ walk of $n$ steps} \st \mbox{$w$ terminates
    on the $y$-axis}\}).
\]
These limits correspond respectively to $\calA$ and $\calC$
in~\cite{tabbaraExactSolutionTwo2014}).
Four phases are then defined as follows:
\begin{enumerate}
\item if $\calA = \calB = 0$, then the phase is \emph{free}
  (the walk moves away from the axes),
\item if $\calA > 0$ and $\calB = 0$, then the phase is \emph{$x$-attracted}
  (the walk moves away from the $y$-axis,
  and tends to come back infinitely often on the $x$-axis),
\item if $\calA = 0$ and $\calB > 0$, then the phase is \emph{$y$-attracted}
  (the walk moves away from the $x$-axis, and tends to come back infinitely often on the $y$-axis),
\item if $\calA > 0$ and $\calB > 0$, then the phase is \emph{supercritical}
  (the walk tends to come in contact with both axes infinitely often).
\end{enumerate}
The values $\calA$ and $\calB$ can be expressed using the generating
function $Q(x,y)$ as
\begin{align*}
  \calA = \limsup_n\,\frac{[t^n] Q(1,0)}{[t^n] Q(1,1)}
  & & \calB = \limsup_n\,\frac{[t^n] Q(0,1)}{[t^n] Q(1,1)}.
\end{align*}

\begin{figure}[h]
  \begin{subfigure}[t]{.5\textwidth}
    \centering
    \includegraphics[height=4cm]{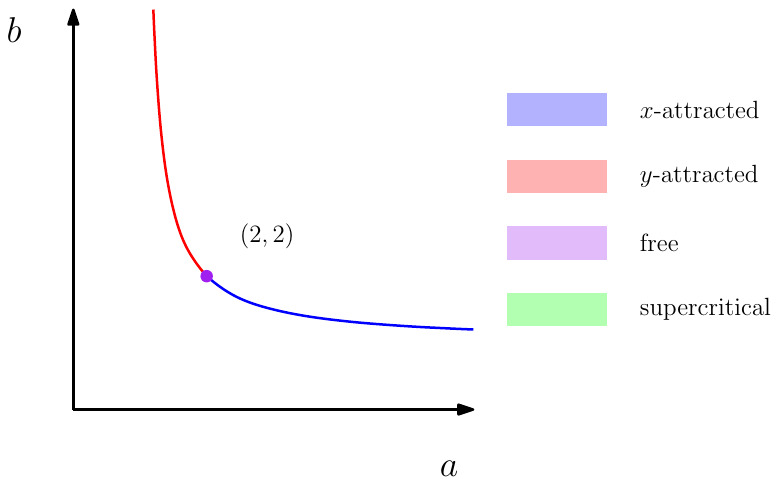}
    \caption{Phase diagram on ${a+b=ab}$.} \label{fig:phase_diagram}
  \end{subfigure}
  \hfill
  \begin{subfigure}[t]{.4\textwidth}
    \centering
    \includegraphics[height=4cm]{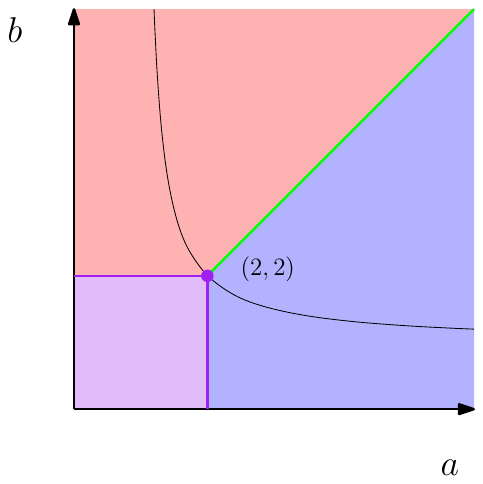}
    \caption{Conjectured phase diagram.} \label{fig:phase_diagram_conj}
  \end{subfigure}
  \caption{Phase diagrams for model $\calS_2$ with $d_{-1,1} = d_{1,-1} = d_{1,0} = d_{0,1} = 1$.}
\end{figure}

In the case of $\calS_1$ or $\calS_2$, the function
$Q(x,y)$ is rational, hence the singularity analysis on the poles
is straightforward.
For instance, for the model $\calS_2$ with $d_{i,j} = 1$,
it yields the phase diagram in Figure~\ref{fig:phase_diagram} below
(see the Maple worksheet).

The change of nature of the generating function $Q(x,y)$ on this curve,
which contains the critical point $(a_0,b_0)$ at the junction of the
four phases suggests that this curve could be related to the phase transitions
of the walk. Numerical computations based on the first
coefficients of $Q(x,y)$ allow us to conjecture that the
full phase diagram looks like Figure~\ref{fig:phase_diagram_conj}.

It would be interesting to know whether the knowledge
of the phases on the curve $a+b=ab$
is enough to deduce some parts of the phase diagram \ref{fig:phase_diagram_conj},
mainly the part under the curve.

\subsection{Combinatorial interpretation}

For models $\calS_1$, $\calS_2$ and $\calS_3$,
we found $\N$-algebraic solutions for $Q(x,y)$
when the weights are subject to some relations
(i.e. $a+b=ab$ for $\calS_1$
and $\calS_2$; $a=b=2$ for $\calS_3$).
These relations were found indirectly through the
study of the $q$-difference equation.
These relations being simple enough, one may wonder
if the expressions found in (\ref{thm:thm_clas:first}) and (\ref{thm:thm_clas:second})
of Theorem~\ref{thm:thm_clas}
for those weights may be deduced through a more combinatorial
argument.

Regarding the weights $a=b=2$ for $\calS_3$,
Andrew Elvey-Price pointed out in a private communication with the
author a direct proof through an adaptation of the reflection principle.
The generating function of unconstrained two dimensional walks
using the set of steps $\calS_3$ that terminate on the $x$-axis
is easily found to be
\[
  F(x, t) = \frac{1}{\displaystyle\sqrt{ 1 - 4 x t d_{1,-1} \left( x t d_{1,1} + \tfrac{1}{x} t d_{-1,1}\right)}}.
\]
A clever adaptation of the reflection principle allows us to relate walks with interacting
boundaries with Boltzmann weights $a=b=2$ to these walks, through
the following identity
\[
  Q(x,0) \cdot \frac{1}{\displaystyle \sqrt{1 - 4 t^2 d_{1,-1} d_{-1,1}}} = F(x,t),
\]
which allows us to deduce the form of $Q(x,0)$. We detail this argument and
extend it to other sets of steps in an upcoming paper.

Such a direct proof is yet to be found regarding the weights $a+b=ab$,
and would be enlightening. For instance, it could give an alternative
explanation as why the relation $a+b=ab$ changes the nature of
$Q(x,y)$, and hopefully permit to find other
sets of steps for which this relation between the Boltzmann weights
yield a D-algebraic generating function.

\subsection{Other \texorpdfstring{$\boldsymbol{q}$-difference}{q-difference} equations} \label{sec:other-q-difference}

The general study of rational solutions to $q$-difference equations
has already been investigated before. Depending on different
constraints on the coefficients and the relation of the complex
number $q$ with regards to these coefficients, there may or may
not be a general algorithm to decide whether such solutions exist.

In the general case where the coefficients of the equation
and $q$ may share algebraic relations, the problem is undecidable
\cite{abramovDecidableUndecidableProblems2010}.

In a more specific case, when the coefficients depend on one parameter,
\cite{abramovLinearQdifferenceEquations2013} gives an algorithm to determine
numerical values of this parameter so that the $q$-difference
equation has a nontrivial
rational solution.
Since we work with more parameters, and we want to find all the algebraic
relations between them so that the equation has solutions, none of these
algorithms can be applied verbatim. This justifies the approach taken
in this paper.

The author thinks that the approach taken in Section~\ref{sect:decoupl_eq},
specifically the structure given by Lemma~\ref{lem:sandwich_poles},
might adapt quite easily to the study of other
decoupling equations of mixed type (multiplicative and additive).
Moreover, we note that our approach works for a general
infinite group of the walk, even if $\iota_2 \iota_1$ is not presented
as a multiplication by $q$ on $\P^1$.
More precisely, for a general decoupling equation of the form
\[
  u h_1 + v h_2 + w = 0
\]
for functions $u$, $v$, $w$, $h_1^{\iota_1} = \pm h_1$
and $h_2^{\iota_2} = \pm h_2$ on some curve $\calC \subset \P^1 \times \P^1$,
the same technique yields
similar finite sets $\cL^-_1$, $\cL^+_1$,
$\cL^-_2$ and $\cL^+_2$, which may be exploited
in the same way as in Section~\ref{sect:decoupl_eq}.

\subsection{Extension to models of genus $1$}
For the time being, we only performed the systematic classification
for walks with interacting boundaries of small steps of genus zero.
It would seem natural to extend the methods for the models with
small steps of genus~$1$ of \cite{HS,dreyfus2019differential,KurkRasch}
for the same purpose.

\subsection*{Acknowledgment} I warmly thank my two advisors Mireille
Bousquet-Mélou and Charlotte Hardouin for their support and
proof reading.

\appendix

\section{Data} \label{sect:mat_results}
This appendix shows the result of the computation of the matrix $M_1$ as
defined in (\ref{mat:matrices}) for each set of step $\calS_i$.
For clarity, the genus $0$ models are subdivided
according to their support, and there
are thus five different tables.
For each table, the value of every entry depends algebraically on the complex
parameters $A=1-\tfrac{1}{a}$,
$B=1-\tfrac{1}{b}$
and $d_{i,j}$. They are computed for each model in its dedicated Maple worksheet.

Note that from Proposition~\ref{prop:mat_sym},
the matrix $M_1$ is symmetric so only the entries
on the upper diagonal are specified, and a simple computation
allows us to deduce the entries of $M_2$ from those
of $M_1$.
Note that for each table, the zeros of $\gamtld_1$ that are $P_1$, $P_2$,
and the zeros of $\gamtld_2$ that are $P_3$, $P_4$, are chosen arbitrarily
and fixed once and for all for each model.

\begin{table}
\centering
\includegraphics[height=9cm]{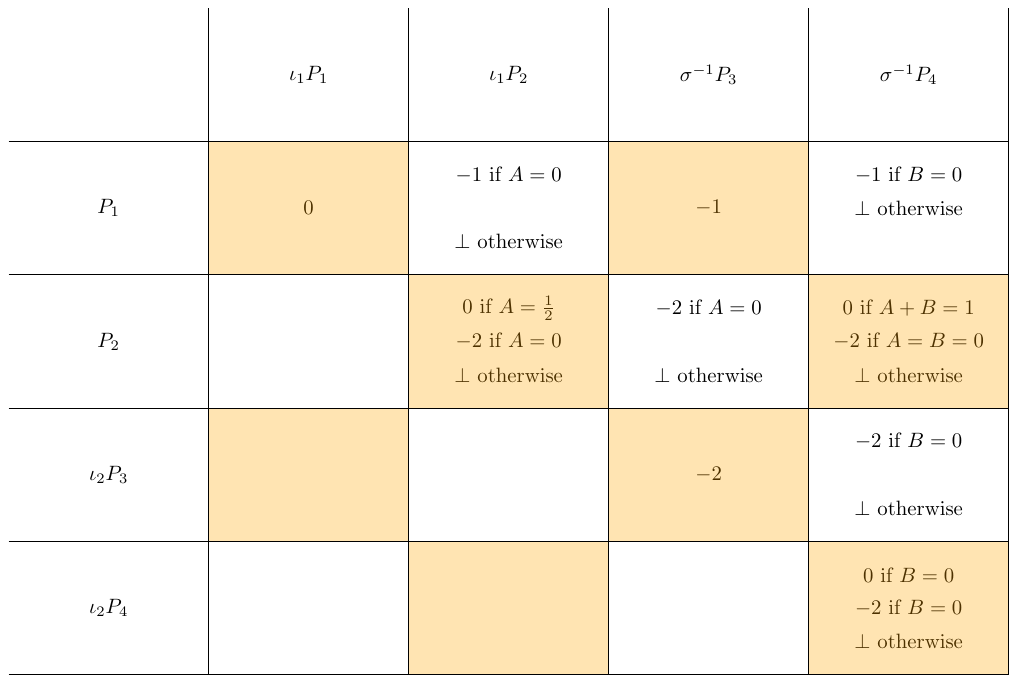}
\caption{Set of steps $\calS_1$} \label{tab:mat_results1}
\end{table}

\begin{table}
\centering
\includegraphics[height=9cm]{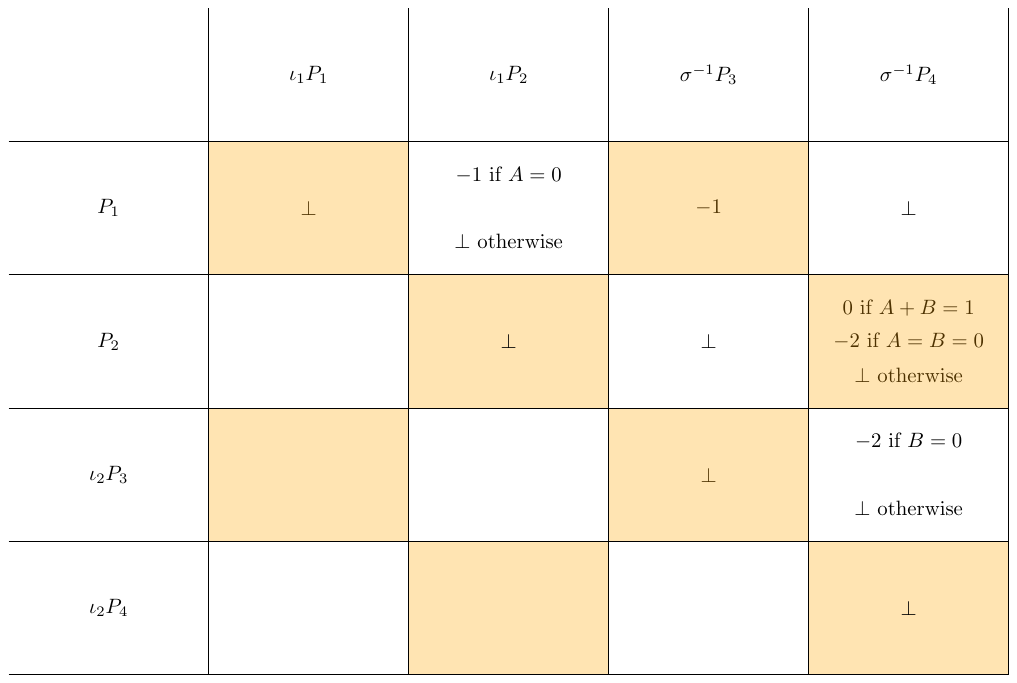}
\caption{Set of steps $\calS_2$} \label{tab:mat_results2}
\end{table}

\begin{table}
\centering
\includegraphics[height=9cm]{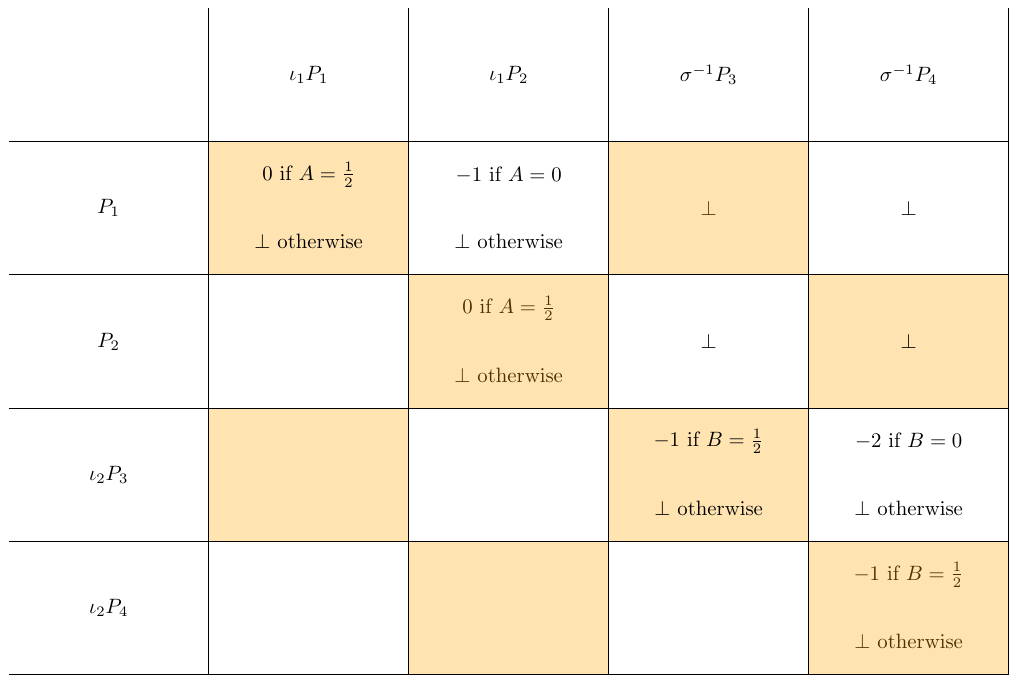}
\caption{Set of steps $\calS_3$} \label{tab:mat_results3}
\end{table}

\begin{table}
\centering
\includegraphics[height=9cm]{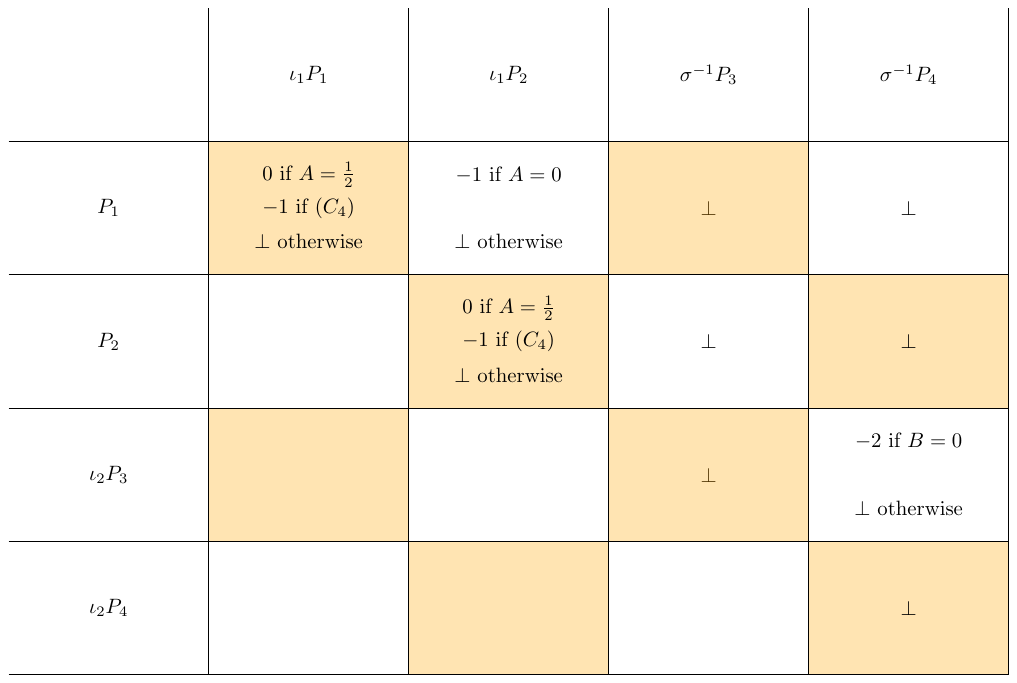}

where $(C_4) \equiv (A=0) \land (4 d_{1,-1} d_{1,1} = {d_{0,1}}^2)$.
\caption{Set of steps $\calS_4$} \label{tab:mat_results4}
\end{table}

\begin{table}
\centering
\includegraphics[height=9cm]{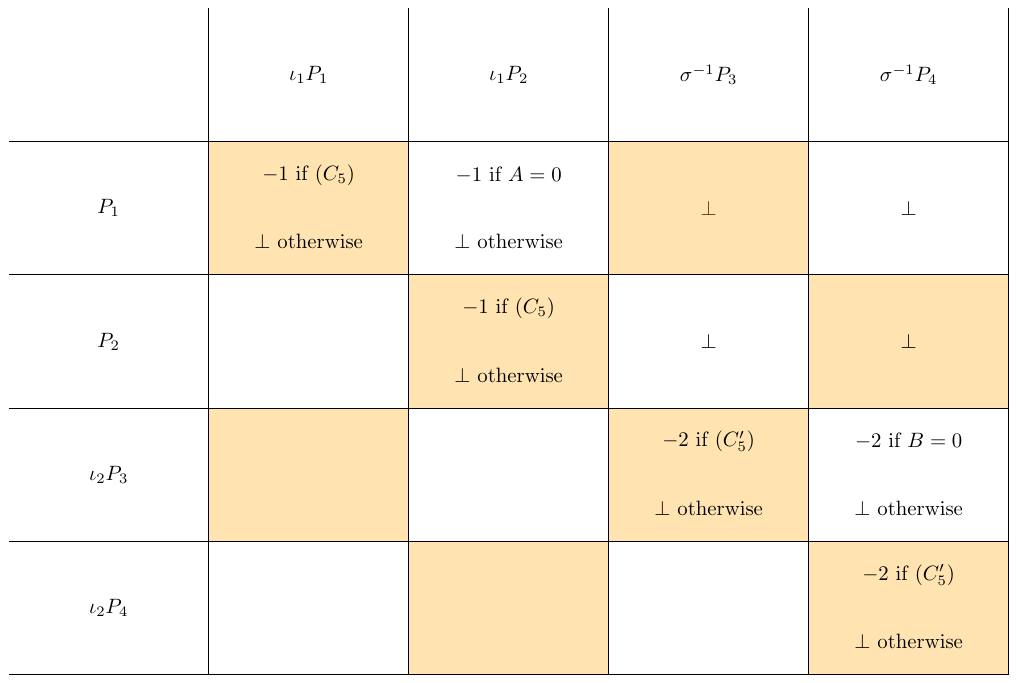}

where $(C_5) \equiv (A=0) \land (4 d_{1,1} d_{-1,1} = {d_{0,1}}^2)$ and $(C_5') \equiv (B=0) \land (4 d_{1,1} d_{-1,1} = {d_{1,0}}^2)$.
\caption{Set of steps $\calS_5$} \label{tab:mat_results5}
\end{table}

\printbibliography

\end{document}